\algnewcommand\algorithmicinput{\textbf{Input:}}
\algnewcommand\INPUT{\item[\algorithmicinput]}
\algnewcommand\algorithmicoutput{\textbf{Output:}}
\algnewcommand\OUTPUT{\item[\algorithmicoutput]}
\algnewcommand\algorithmiccomplexity{\textbf{Complexity:}}
\algnewcommand\COMPLEXITY{\item[\algorithmiccomplexity]}
\algnewcommand\algorithmicproc{\textbf{Procedure:}}
\algnewcommand\PROCEDURE{\item[\algorithmicproc]}
\newlength{\continueindent}
\newcommand*{\ALG@customparshape}{\parshape 2 \leftmargin \linewidth \dimexpr\ALG@tlm+\continueindent\relax \dimexpr\linewidth+\leftmargin-\ALG@tlm-\continueindent\relax}
\apptocmd{\ALG@beginblock}{\ALG@customparshape}{}{\errmessage{failed to patch}}
   \edef\Gin@extensions{\Gin@extensions,.mps}
\selectfont\symbol{60}\fontencoding{\encodingdefault}}
\selectfont\symbol{62}\fontencoding{\encodingdefault}}
\selectfont\symbol{124}\fontencoding{\encodingdefault}}
\newcommand{\noplus}{}
\newcommand{\tmop}[1]{\ensuremath{\operatorname{#1}}}
\definecolor{LightCyan}{rgb}{0.88,1,1}
\newtheorem{theorem}{Theorem}
\newtheorem{lemma}{Lemma}
\newtheorem{corollary}{Corollary}
\newtheorem{proposition}{Proposition}
\theoremstyle{definition}
\newtheorem{definition}{Definition}
\newtheorem{example}{Example}
\newtheorem{notation}{Notation}
\theoremstyle{remark}
\newtheorem{remark}{Remark}
\newtheoremstyle{break}  
  {3pt}   
  {11pt}   
  {\normalfont}  
  {0pt}       
  {\scshape} 
  {}         
  {4pt}  
  {}          
\theoremstyle{break}
\newcommand{\ra}{\rangle}
\newcommand{\la}{\langle}
\newcommand{\F}{\mathbb{F}}
\newcommand{\R}{\mathrm{R}}
\newcommand{\C}{\mathrm{C}}
\newcommand{\Q}{\mathbb{Q}}
\newcommand{\ZZ}{\mathrm{Zer}}
\newcommand{\RR}{\mathrm{Reali}}
\newcommand{\D}{\mathrm{D}}
\newcommand{\Z}{\mathbb{Z}}
\newcommand{\eps}{\varepsilon}
\newcommand{\chiep}{\chi}
\DeclareMathOperator{\SIGN}{SIGN}
\DeclareMathOperator{\sign}{sign}
\DeclareMathOperator{\Ext}{Ext}
\DeclareMathOperator{\Def}{Def}
\DeclareMathOperator{\Der}{Der}
\DeclareMathOperator{\EQ}{EuQ}
\DeclareMathOperator{\lex}{lex}
\newcommand{\HH}{\mathrm{H}}
\newcommand{\card}{\mathrm{card}}
\newcommand {\hide}[1]{}
\newcommand{\X}{\mathbf{X}}
\newcommand{\x}{\mathbf{x}}
\newcommand{\N}{\mathbb{N}}
\newcommand{\ZB}{\mathbf{Z}}
\newcommand{\zb}{\mathbf{z}}
\newcommand{\kk}{\mathbf{k}}
\newcommand{\length}{\mathrm{length}}
\newcommand{\gen}{\mathrm{gen}}
\newcommand{\boldpi}{\boldsymbol{\pi}}
\newcommand{\fixed}{\mathrm{fixed}}
\newcommand{\orbit}{\mathrm{orbit}}
\newcommand{\ind}{\mathrm{ind}}
\newcommand{\Hess}{\mathrm{Hess}}
\newcommand{\EuQ}{\mathrm{EuQ}}
\newcommand{\boldPi}{\boldsymbol{\Pi}}
\begin{document}
\title[Computing the Euler-Poincar{\'e} characteristic ]
{Efficient algorithms for computing the 
Euler-Poincar{\'e} characteristic of symmetric semi-algebraic sets}

\author{Saugata Basu}
\address{Department of Mathematics\\
Purdue University, West Lafayette\\
USA}
\email{sbasu@math.purdue.edu
}

\author{Cordian Riener}
\address{Aalto Science Institute\\
Aalto University, Espoo\\
Finland}
\email{cordian.riener@aalto.fi}

\thanks{The first author was partially supported by NSF grants
CCF-1319080 and DMS-1161629.}

\maketitle

\begin{abstract}
Let $\R$ be a real closed field and $\D \subset \R$ an ordered domain.
We consider the algorithmic problem of computing the generalized Euler-Poincar\'e characteristic of real algebraic as well as semi-algebraic
subsets of $\R^k$, which are defined by symmetric polynomials with coefficients in $\D$. We give algorithms for computing the generalized Euler-Poincar\'e characteristic
of such sets, whose complexities measured by the number the number of arithmetic operations in $\D$, are polynomially bounded in terms of $k$ and the number of polynomials
in the input, assuming that the degrees of the input polynomials are bounded by a constant.
This is in contrast to the best complexity of the known algorithms for the same problems in 
the non-symmetric situation, which are singly exponential. This singly exponential complexity for the 
latter problem is unlikely to be improved because of hardness result ($\#\mathbf{P}$-hardness) coming from 
discrete complexity theory. 
\hide{
We give   algorithms with polynomially bounded complexities (for fixed degrees) for
computing the generalized Euler-Poincar{\'e} characteristic of semi-algebraic sets defined by symmetric 
polynomials. This is in contrast to the best complexity of the known algorithms for the same problem in 
the non-symmetric situation, which is singly exponential. This singly exponential complexity for the 
latter problem is unlikely to be improved because of hardness result ($\#\mathbf{P}$-hardness) coming from 
discrete complexity theory. 
}
\end{abstract}

\section{Introduction}

Let $\R$ be a real closed field which is fixed for the remainder of the paper,
and let $\C$ denote the algebraic closure of $\R$. Given a semi-algebraic set $S\subset \R^k$, i.e., a set defined by unions and intersections of polynomial inequalities, it is a fundamental question of 
computational algebraic geometry to compute topological information about $S$. 
This problem of designing efficient algorithms for computing topological
invariants -- such as the Betti numbers as well as the Euler-Poincar{\'e}
characteristic -- has a long history. The first algorithms {\cite{SS}} used
the technique of cylindrical algebraic decomposition and consequently had
doubly exponential complexity. Algorithms for computing the zeroth Betti
number (i.e. the number of semi-algebraically connected components) of
semi-algebraic sets using the critical points method were discovered later
{\cite{Canny87,GR92,GV92,BPR99}} and improving
this complexity bound remains an active area of research even today. Later,
algorithms with singly exponential complexity for computing the first Betti
number {\cite{BPRbettione}}, as well as the first few Betti numbers
{\cite{Bas05-first}} were discovered. Algorithms with singly exponential
complexity for computing the Euler-Poincar\'e characteristic are also known
{\cite{BPR-euler-poincare}}. It remains an open problem to design an algorithm
with singly exponential complexity for computing all the Betti numbers of a
given semi-algebraic set. Algorithms with polynomially bounded complexity for
computing the Betti numbers of semi-algebraic sets are known in a few cases
-- for example, for sets defined by a few (i.e. any constant number of)
quadratic inequalities \cite{BP'R07joa,BP'R07jems}. Also note that
the problem of expressing the Euler-Poincar\'e characteristic of real
algebraic varieties in terms of certain algebraic invariants of the
polynomials defining the variety has been considered by several other authors
(see for example {\cite{Dutertre2003}} and {\cite{Szafraniec94}}). But these
studies do not take into account the computational complexity aspect of the
problem.

In this article we will restrict to the case when $S$ is defined by \emph{symmetric} polynomial inequalities whose degree is at most $d\in\N$, which we will think of as a fixed constant. It is known \cite{Timofte03,Riener} that in this particular setup one can decide emptiness of $S$ in a time which is polynomial in $k$ - the number of variables.  Despite being a rather basic property, it is known to be a 
$\textbf{NP}$-hard problem (in the Blum-Shub-Smale model) to decide
if a given real algebraic variety $V \subset \R^{k}$ defined by one polynomial
equation of degree at most $4$ is empty or not {\cite{BSS89}}.
Following this notable difference   it is natural to ask, if in general symmetric semi-algebraic sets are
algorithmically more tractable than general semi-algebraic sets and if it is possible to obtain polynomial time (for fixed degree $d$)
algorithms for computing topological invariants of such sets. In this article  we answer this in
the affirmative for the problem of computing the generalized
Euler-Poincar{\'e} characteristic (both the ordinary as well as the
equivariant versions) of symmetric semi-algebraic sets. The problem of
computing the generalized Euler-Poincar{\'e} characteristic is important in
several applications both theoretical and practical. For example, such an
algorithm is a key ingredient in computing the integral (with respect to the
Euler-Poincar{\'e} measure) of constructible functions, and this latter
problem has been of recent interest in several applications
\cite{Ghrist2010}.

Before proceeding further we first fix some notation.

\begin{notation}
  For $P \in \R [X_{1}, \ldots,X_{k} ]$ (respectively $P \in \C [ X_{1},
  \ldots,X_{k} ]$) we denote by $\ZZ (P, \R^{k})$ (respectively, $\ZZ(P,\C^k)$) the set of zeros of $P$ in
  $\R^{k}$ (respectively, $\C^k$). More generally, for any finite set
  $\mathcal{P} \subset \R [ X_{1}, \ldots,X_{k} ]$ (respectively, $\mathcal{P} \subset \C [ X_{1}, \ldots,X_{k} ]$), we denote by $\ZZ(\mathcal{P},\C^k)$
  $\ZZ (\mathcal{P}, \R^{k})$ (respectively,  $\ZZ(\mathcal{P},\C^k)$)  the set of common zeros of $\mathcal{P}$ in
  $\R^{k}$ (respectively, $\C^k$). 
\end{notation}

\begin{notation}

Let $\mathcal{P}\subset \R [ X_{1}, \ldots,X_{k} ]$ be a finite family of polynomials. 
\begin{enumerate}
\item We call  any Boolean formula $\Phi$ with atoms, $P \; \sim \; 0, P \in \mathcal{P}$, where $\sim$ is one of $=,>,$ or $<$, 
  to be a \emph{$\mathcal{P}$-formula}. We call the realization of $\Phi$,
  namely the semi-algebraic set
  \begin{eqnarray*}
    \RR (\Phi, \R^{k}) & = & \{ \x \in \R^{k} \mid
    \Phi (\x) \}
  \end{eqnarray*}
  a \emph{$\mathcal{P}$-semi-algebraic set}. 
\item  \label{not:sign-condition}  We call an element $\sigma \in \{
  0,1,-1 \}^{\mathcal{P}}$, a \emph{sign condition} on $\mathcal{P}$. For
  any semi-algebraic set $Z \subset \R^{k}$, and a sign condition $\sigma \in
  \{ 0,1,-1 \}^{\mathcal{P}}$, we denote by $\RR (\sigma,Z)$ the
  semi-algebraic set defined by 
  \[
  \left\{ \x \in Z \mid \sign (P (\x)) = \sigma (P) ,P \in \mathcal{P} \right\},
  \]
  and call it the
  \emph{realization} of $\sigma$ on $Z$. 
  
\item We call a Boolean
  formula without negations, and with atoms 
  $P \;\sim\; 0, P\in \mathcal{P}$, and $\sim$ one of  $\{\leq,\geq\}$, 
  to be a \emph{$\mathcal{P}$-closed formula}, and we call
  the realization, 
  $\RR(\Phi, \R^{k})$, a \emph{$\mathcal{P}$-closed semi-algebraic set}.

  \item\label{not:sign-condition} For any finite family of polynomials $\mathcal{P}
  \subset \R [ X_{1}, \ldots,X_{k} ]$, we call an element $\sigma \in \{
  0,1,-1 \}^{\mathcal{P}}$, a \emph{sign condition} on $\mathcal{P}$. For
  any semi-algebraic set $Z \subset \R^{k}$, and a sign condition $\sigma \in
  \{ 0,1,-1 \}^{\mathcal{P}}$, we denote by $\RR (\sigma,Z)$ the
  semi-algebraic set defined by 
  \[
  \left\{ \x \in Z \mid \sign (P (\x)) = \sigma (P) ,P \in \mathcal{P} \right\},
  \]
  and call it the
  \emph{realization} of $\sigma$ on $Z$. 
  \item \label{7:def:sign}\label{7:not:sign}{\index{Sign condition!set of realizable
  sign conditions}} 
We denote by
  \[ \SIGN (\mathcal{P}) \subset {\{0,1, - 1\}^{\mathcal{P}}} \]
  the set of all  
  sign conditions $\sigma$ on $\mathcal{P}$ 
  such that $\RR(\sigma,\R^k) \neq \emptyset$. 
  We call $\SIGN(\mathcal{P})$ the set of \emph{realizable sign conditions of $\mathcal{P}$}.
  \end{enumerate}
\end{notation}

\begin{notation}
  For any semi-algebraic set 
  $X$, and a field of
  coefficients $\F$, we will denote by $\HH_{i} (X,\F)$ the
  \emph{$i$-th homology group} of $X$ with coefficients in
  $\F$, by $b_{i} (X,\F) =  \dim_{\F}  \HH_{i} (X,\F)$.
  \end{notation}
  
  Note here that we work over any real closed field. Therefore the definition of 
  homology groups
  is a little bit more delicate, in particular because $\R$ might be non-archimedean. 
  In case of a closed and bounded semi-algebraic set, $S$ the homology $\HH_{i} (S,\F)$ can be defined as the 
  $i$-th simplicial homology group associated to a semi-algebraic triangulation of $S$. The general case then is taken care of by constructing to  a general semi-algebraic set $S$ a semi-algebraic set $S'$, which is closed, bounded, and furthermore semi-algebraically homotopy equivalent to $S$.  We refer the reader to \cite[Chapter 6]{BPRbook3} for details of this construction. 
The  
topological
Euler-Poincar{\'e} characteristic of a
semi-algebraic set $S \subset \R^{k}$ is the alternating sum of the Betti
numbers of $S$. More precisely,

  \begin{eqnarray*}
    \chi^{\mathrm{top}}(S,\F) & = & \sum_{i} (-1)^{i} \dim_{\F}  \HH_{i}
    (S,\F).
  \end{eqnarray*}

For various applications (such as in  motivic integration \cite{Cluckers-Loeser} and other applications of
Euler integration \cite{Schapira91, Schapira95, Viro-euler,Ghrist2010})  the generalized Euler-Poincar{\'e} characteristic has proven to be more useful than the ordinary Euler-Poincar\'e  characteristic.
The main reason behind the usefulness of
the generalized Euler-Poincar{\'e} characteristic of a semi-algebraic set is its additivity
property, which is not satisfied by the topological Euler-Poincar{\'e} characteristic.
The generalized Euler-Poincar{\'e} characteristic agrees with the topological Euler-Poincar{\'e} characteristic
for compact semi-algebraic sets, but can be different for non-compact ones (see Example 
\ref{eg:Euler}). 
Nevertheless, the generalized Euler-Poincar{\'e} characteristic is intrinsically important because of the following reason.

The Grothendieck group $K_{0} (\mathbf{s}\mathbf{a}_\R)$ of semi-algebraic 
isomorphic 
classes  of semi-algebraic sets (two semi-algebraic sets being isomorphic if there is a continuous semi-algebraic bijection between them) 
(see for example \cite[Proposition 1.2.1]{Cluckers-Loeser}) is isomorphic to $\Z$, and the generalized Euler-Poincar{\'e} characteristic of a semi-algebraic set
can be identified with its image under the isomorphism that takes the class of a point (or any closed disk) to
$1$.

\begin{definition}
  \label{def:ep-general} The generalized Euler-Poincar{\'e} characteristic,
  $\chi^{\gen} (S)$, of a semi-algebraic set
  $S$ is uniquely defined by the following properties {\cite{Dries}}:
  \begin{enumerate}
    \item $\chi^{\tmop{gen}}$ is invariant under semi-algebraic
    homeomorphisms.
    
    \item $\chi^{\tmop{gen}}$ is multiplicative, i.e. $\chi^{\tmop{gen}} (A
    \times B) = \chi^{\tmop{gen}} (A) \cdot \chi^{\tmop{gen}} (B)$.
    
    \item $\chi^{\tmop{gen}}$ is additive, i.e. $\chi^{\tmop{gen}} (A \cup B
   ) = \chi^{\tmop{gen}} (A) + \chi^{\tmop{gen}} (B) - \chi^{\tmop{gen}}
    (A \cap B)$.
    
    \item $\chi^{\tmop{gen}} ([ 0,1 ]) =1$.
  \end{enumerate}
\end{definition}

The following examples are illustrative.
\begin{example}
\label{eg:Euler}
For every $n \geq 0$,
\begin{align*}
 \chi^{\tmop{gen}}([0,1]^n) &= \chi^{\tmop{gen}}([0,1])^n = 1, \\
 \chi^{\mathrm{top}}([0,1]^n)  &= 1,\\
 \chi^{\tmop{gen}}((0,1)^n)& = (\chi^{\tmop{gen}}(0,1))^n = (\chi^{\tmop{gen}}([0,1]) - \chi^{\tmop{gen}}({0}) - \chi^{\tmop{gen}}({1}))^n = (-1)^n, \\
 \chi^{\mathrm{top}}((0,1)^n) &= 1.
 \end{align*}
\end{example}

Let  $\mathfrak{S}_{{k}}$ denote the symmetric group on $k$-letters. Throughout the article we will consider the more general case of products of symmetric groups and we fix the following notation.

\begin{notation}\label{not:mult}
For $\omega\in\N$ and $\mathbf{k}= (k_{1}, \ldots,k_{\omega}) \in \Z_{>0}^{\omega}$, with $k= \sum_{i=1}^{\omega} k_{i}$ denote $\mathfrak{S}_{\mathbf{k}}=\mathfrak{S}_{{k_1}}\times\ldots\mathfrak{S}_{{k_\omega}}$. If $\omega =1$, then $k=k_{1}$, and we will denote $\mathfrak{S}_{\mathbf{k}}$
  simply by $\mathfrak{S}_{k}$.  
\end{notation}
A  set $X\subset\R^k$ is said to be symmetric, if it is closed under the action of $\mathfrak{S}_{\mathbf{k}}$. For such a set  we will denote by $X/\mathfrak{S}_{\mathbf{k}}$ the {{\em orbit space\/}} of this action.  
\begin{notation}
  \label{not:equivariant-betti} For any $\mathfrak{S}_{\mathbf{k}}$ symmetric
  semi-algebraic subset $S \subset \R^{k}$ with $\mathbf{k}= (k_{1}, \ldots
 ,k_{\omega}) \in \Z_{>0}^{\omega}$, with $k= \sum_{i=1}^{\omega} k_{i}$,
  and any field $\F$,  we denote
  
  \begin{enumerate}
  \item 
   $ \chi^{\mathrm{top}}(S,\F)  =  \sum_{i \geq 0} (-1)^{i}  b_{i} (
    S,\F),$
    \item $\chi_{\mathfrak{S}_{\mathbf{k}}} (S,\F)  =  \sum_{i \geq 0} (
    -1)^{i}  b_{\mathfrak{S}_{\mathbf{k}}}^{i} (S,\F),$

  \item $\chi_{\mathfrak{S}_{\mathbf{k}}}^{\gen} (S)
  = \chi^{\gen} (S/\mathfrak{S}_{\mathbf{k}}) =
  \chi^{\gen} (\phi_{\mathbf{k}} (S))$.
\end{enumerate}

\end{notation}

\subsection{Main result}
We describe new algorithms for computing the  \emph{ generalized Euler-Poincar{\'e} characteristic} (see Definition
\ref{def:ep-general}) of semi-algebraic sets defined in terms of symmetric polynomials. The algorithms we give here have complexity which is polynomial (for fixed degrees and the
number of blocks) in the number of symmetric variables. Since for systems of
equations with a finite set of solutions, the generalized Euler-Poincar{\'e}
characteristic of the set of solutions coincides with its cardinality, it is
easily seen that that computing the generalized Euler-Poincar{\'e}
characteristic of the set of solutions of a polynomial system with a fixed
degree bound is a $\#\mathbf{P}$-hard problem in general (i.e. in the
non-symmetric situation). Thus, this problem is believed to be unlikely to
admit a polynomial time solution.

We prove the following theorems.

\begin{theorem}
  \label{thm:algorithm-algebraic}Let $\D$ be an ordered domain contained in a
  real closed field $\R$. Then, there exists an algorithm that takes as input:
  \begin{enumerate}
    \item a tuple $\mathbf{k}= (k_{1}, \ldots,k_{\omega}) \in
    \Z_{>0}^{\omega}$, with $k= \sum_{i=1}^{\omega} k_{i}$;
    
    \item a  polynomial $P \in \D [ \X^{(1)}, \ldots
   ,\X^{(\omega)} ]$, where each $\X^{(i)}$ is a
    block of $k_{i}$ variables, and $P$ is symmetric in each block of
    variables $\X^{(i)}$;
  \end{enumerate}
  and computes the generalized Euler-Poincar{\'e} characteristics
  \[
\chi^{\gen} \left(\ZZ \left(P, \R^{k} \right)
  \right), 
  \chi_{\mathfrak{S}_{\mathbf{k}}}^{\gen} \left(
  \ZZ \left(P, \R^{k} \right) \right).
\]
 The complexity of the algorithm
  measured by the number of arithmetic operations in the ring $\D$ (including
  comparisons) is bounded by  $(\omega  k d)^{O (D)}$,
  where $d= \deg (P)$ and $D= \sum_{i=1}^{\omega} \min (k_{i},2d)$.
\end{theorem}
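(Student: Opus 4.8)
The plan is to reduce the computation of the generalized Euler-Poincaré characteristic of a symmetric algebraic set $\ZZ(P,\R^k)$ to a bounded number of computations in low-dimensional "symmetrized" families, exploiting the fact that a polynomial of degree $d$ that is symmetric in a block $\X^{(i)}$ of $k_i$ variables factors through the first $\min(k_i, d)$ (or so) power sums of that block. Concretely, for each block $i$ one introduces the Newton power sums $p^{(i)}_1, \ldots, p^{(i)}_{d}$ and observes that $P$ can be rewritten as a polynomial $\widetilde P$ in these new variables, so that the orbit map $\phi_{\mathbf k}$ sends $\ZZ(P,\R^k)$ onto a semi-algebraic subset of a space of dimension $D = \sum_i \min(k_i, 2d)$. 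The first step of the algorithm is thus to perform this change of variables symbolically; this is purely algebraic and costs $(\omega k d)^{O(1)}$ operations in $\D$.

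**Next I would** invoke the additivity and the semi-algebraic-homeomorphism invariance of $\chi^{\gen}$ together with an equivariant stratification of $\R^k$ by the isotropy type of the $\mathfrak S_{\mathbf k}$-action: partition $\R^k$ into the (finitely many, and for fixed $d$ boundedly many once one further quotients by the symmetry that is not "seen" by $P$) strata on which the coordinates within each block have a fixed pattern of equalities, i.e. lie in a fixed orbit-type stratum. On each such stratum the action is free modulo a fixed subgroup, so $\chi^{\gen}$ of the quotient of $\ZZ(P,\R^k)$ restricted to the stratum equals $\chi^{\gen}$ of its image, and the image is a semi-algebraic set described by the pulled-back polynomial in a number of free variables that is at most $D$. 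Summing these contributions with the inclusion–exclusion signs dictated by additivity yields both $\chi^{\gen}(\ZZ(P,\R^k))$ (by multiplying each orbit-type contribution by the Euler-Poincaré characteristic of the corresponding fiber, which is a single point, hence $1$, after collapsing) and $\chi^{\gen}_{\mathfrak S_{\mathbf k}}(\ZZ(P,\R^k)) = \chi^{\gen}(\ZZ(P,\R^k)/\mathfrak S_{\mathbf k})$ directly.

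**The computational engine** in each bounded-dimensional stratum is the known singly exponential algorithm for computing the generalized Euler-Poincaré characteristic of a semi-algebraic set (the one from \cite{BPR-euler-poincare}); applied in $O(D)$ variables with $O(1)$ polynomials of degree $O(d)$ it runs in time $d^{O(D)}$, and since there are at most $(\omega k)^{O(D)}$ strata to consider, the total is $(\omega k d)^{O(D)}$ as claimed. One must also handle that the stratification is not literally by $\mathfrak S_{\mathbf k}$-orbit types of points in $\R^k$ (of which there are exponentially many) but by the coarser data that survives after passing to power sums and restricting attention to patterns with at most $2d$ distinct values per block — this is precisely what produces the exponent $D = \sum_i \min(k_i, 2d)$ rather than $\sum_i k_i$, and it is the key quantitative input; I expect the main technical obstacle to be proving rigorously that only orbit-type strata with at most $\min(k_i, 2d)$ "degrees of freedom" per block contribute, i.e. that the restriction of $P$ to a stratum with more coincidences is determined by, and homeomorphic (equivariantly, stratum-wise) to, data living in $\le D$ dimensions. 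This last point presumably rests on a Vandermonde/degree argument showing a degree-$d$ symmetric polynomial cannot distinguish configurations once the number of distinct coordinates exceeds $\sim 2d$, and on a careful bookkeeping of which strata must be combined so that the alternating sums telescope correctly.
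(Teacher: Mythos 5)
There is a genuine gap at the heart of your plan: the claim that only orbit-type strata with at most $\min(k_i,2d)$ distinct coordinates per block ``contribute'' to the Euler--Poincar\'e characteristic. This is a misreading of the half-degree principle. That principle (and Proposition \ref{prop:half-degree} in the paper) constrains where \emph{critical points of the symmetric Morse function} $p_1^{(k)}$ on a suitably deformed variety can lie (each coordinate of a critical point is a root of a fixed univariate polynomial of degree $\le d$, forcing few distinct values); it says nothing about the variety itself or about where the Euler characteristic is ``supported.'' In general $\ZZ(P,\R^k)$ meets strata with up to $k$ distinct coordinates, and in any additive decomposition of $\chi^{\gen}$ over orbit-type strata those high strata contribute nontrivially: already for $P=\sum_i X_i^2-1$ the sphere lies almost entirely in the stratum with all coordinates distinct, and you cannot discard that stratum and recover $\chi^{\gen}(S^{k-1})$. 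So your stratify-and-drop step computes the wrong quantity, and without it you are left summing over all partitions of $k$, which is not polynomial in $k$. A secondary slip: for the ordinary characteristic the fibers of the quotient map over a stratum of type $\boldpi$ are orbits of cardinality $\binom{k_1}{\pi^{(1)}}\cdots\binom{k_\omega}{\pi^{(\omega)}}$, not single points, so the orbit-type contributions must be weighted by these multinomial coefficients (exactly the weights appearing in \eqref{eqn:equivariant-ep1}); also, rewriting $P$ in the first $d$ power sums per block does not identify the quotient with a $D$-dimensional semi-algebraic set, since points with equal truncated power sums need not lie in one $\mathfrak{S}_{\mathbf{k}}$-orbit.

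What the paper does instead, and what your argument is missing, is an equivariant Morse-theoretic mechanism that legitimately localizes the computation at finitely many points: replace $\ZZ(P,\R^k)$ (after squaring and reducing to the bounded case via Proposition \ref{prop:ep-unbounded} and the construction in Algorithm \ref{alg:ep-general-BM}) by the deformation $\Def(P,\zeta,d')$ with $d'-1$ prime and coprime to $k$, so that $p_1^{(k)}$ restricted to the deformed hypersurface has finitely many non-degenerate critical points (Propositions \ref{prop:alg-to-semialg} and \ref{prop:non-degenerate}); prove exact formulas for $\chi^{\mathrm{top}}$ and $\chi_{\mathfrak{S}_{\mathbf{k}}}$ as signed sums over those critical points (Lemmas \ref{lem:equivariant_morseA}, \ref{lem:equivariant_morseB}, Theorem \ref{thm:equivariant-ep}); and only then apply the half-degree principle to conclude that every critical point lies in some $L_{\boldpi}$ with $\length(\boldpi)\le D$, so that for each of the $k^{O(D)}$ relevant partitions the critical points can be found by algebraic sampling in $\length(\boldpi)$ variables, giving the $(\omega k d)^{O(D)}$ bound. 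If you want to salvage your write-up, this deformation-plus-equivariant-Morse step (or some equally strong substitute) must replace the unjustified localization over orbit-type strata.
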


Notice that in case, $\omega =1$ and $\mathbf{k}= (k)$, the complexity is
polynomial in $k$ for fixed $d$.

We have the following result in the semi-algebraic case.

\begin{theorem}
  \label{thm:algorithm-sa}Let $\D$ be an ordered domain contained in a real
  closed field $\R$. Then, there exists an algorithm that takes as input:
  \begin{enumerate}
    \item a tuple $\mathbf{k}= (k_{1}, \ldots,k_{\omega}) \in
    \Z_{>0}^{\omega}$, with $k= \sum_{i=1}^{\omega} k_{i}$;
    
    \item a set of $s$ polynomials $\mathcal{P}= \{ P_{1}, \ldots,P_{s} \}
    \subset \D [ \X^{(1)}, \ldots,\X^{(\omega)} ]$,
    where each $\X^{(i)}$ is a block of $k_{i}$ variables, and
    each polynomial in $\mathcal{P}$ is symmetric in each block of variables
    $\X^{(i)}$ and of degree at most $d$;
    
    \item a $\mathcal{P}$-semi-algebraic set $S$, described by
    \begin{eqnarray*}
      S & = & \bigcup_{\sigma \in \Sigma} \RR \left(\sigma, \R^{k} \right),
    \end{eqnarray*}
    where $\Sigma \subset \{ 0,1,-1 \}^{\mathcal{P}}$;
  \end{enumerate}
  and computes the generalized Euler-Poincar{\'e} characteristics
 $
\chi^{\gen} (S),
  \chi_{\mathfrak{S}_{\mathbf{k}}}^{\gen} (S).
$
  The complexity of the algorithm measured by the number of arithmetic
  operations in the ring $\D$ (including comparisons) is bounded by
  \[ \card (
\Sigma)^{O (1)} + s^{D'} k^{d } d^{O (D'D'')} + s^{D'} d^{O(D'')} ( k \omega D)^{O (D''')}, \]
where 
$D=d  (D'' \log  d+ D'\log  s))$,
$D' = \sum_{i=1}^{\omega} \min (k_{i},d)$,
$D'' = \sum_{i=1}^{\omega} \min (k_{i},d)$, and
$D''' = \sum_{i=1}^\omega \min(k_i, 2D)$.

The algorithm also involves the inversion matrices of size $s^{D'} d^{O (D'')}$ with integer
coefficients.
\end{theorem}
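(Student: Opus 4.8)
The plan is to reduce the computation of $\chi^{\gen}(S)$ and $\chi^{\gen}_{\mathfrak{S}_{\mathbf{k}}}(S)$ for a semi-algebraic set to the algebraic case already handled by Theorem \ref{thm:algorithm-algebraic}, using the additivity of the generalized Euler-Poincar\'e characteristic together with a symmetric adaptation of the standard ``sign-condition to zero-set'' encoding. More precisely, first I would observe that since $S = \bigcup_{\sigma\in\Sigma}\RR(\sigma,\R^k)$ is a disjoint union of realizations of sign conditions, additivity gives $\chi^{\gen}(S) = \sum_{\sigma\in\Sigma}\chi^{\gen}(\RR(\sigma,\R^k))$, and likewise for the equivariant (quotient) version after checking that each $\RR(\sigma,\R^k)$ is $\mathfrak{S}_{\mathbf{k}}$-stable (which it is, since the $P_i$ are symmetric in each block). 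So it suffices to compute $\chi^{\gen}$ of each realizable sign condition. To do this I would use the classical device (as in \cite{BPR-euler-poincare}, \cite{BPRbook3}) of replacing strict and non-strict sign conditions by infinitesimal perturbations and a single auxiliary variable, so that a finite Boolean combination of sign conditions on $\mathcal{P}$ gets rewritten, over a suitable extension $\R\langle\eps_1,\ldots\rangle$ of the real closed field, as the zero set of a single polynomial $Q$ built out of $\mathcal{P}$ plus the perturbation parameters; crucially this $Q$ can be chosen to remain symmetric in each block $\X^{(i)}$, at the cost of introducing a bounded number of extra ``non-symmetric'' variables (the $\eps$'s and a sum-of-squares slack variable), whose number is absorbed into the complexity bound. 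Then I would invoke Theorem \ref{thm:algorithm-algebraic} on $Q$.

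The key steps, in order, would be: (1) enumerate the realizable sign conditions in $\Sigma$ — or rather work directly with the given $\Sigma$ — and set up, for each $\sigma$, a $\mathcal{P}$-formula describing $\RR(\sigma,\R^k)$; (2) replace the locally closed set $\RR(\sigma,\R^k)$ by a closed bounded set that is semi-algebraically homotopy equivalent to it after quotienting, or more directly, express $\chi^{\gen}(\RR(\sigma,\R^k))$ as an integer linear combination of $\chi^{\gen}$ of \emph{closed} sign conditions via the inclusion-exclusion / Mayer--Vietoris style identities — this is where the ``inversion of matrices of size $s^{D'}d^{O(D'')}$'' in the statement enters, since the number of relevant closed sign conditions on a symmetric family is bounded by $s^{D'}d^{O(D'')}$ by the symmetric Oleinik--Petrovskii--Thom--Milnor type bounds coming from \cite{Timofte03,Riener} and their quantitative refinements; (3) for each such closed condition, write it as the real zero set of one symmetric polynomial $Q$ (sum of squares plus slack) over an extension by finitely many infinitesimals, and call the algorithm of Theorem \ref{thm:algorithm-algebraic}; (4) solve the resulting linear system to recover $\chi^{\gen}(\RR(\sigma,\R^k))$, and sum over $\sigma\in\Sigma$. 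For the equivariant version one runs the same scheme but tracks $\chi^{\gen}(\RR(\sigma,\R^k)/\mathfrak{S}_{\mathbf{k}})$, again legitimate because all the sets and the perturbations can be kept symmetric. Finally I would bound the complexity: the $\card(\Sigma)^{O(1)}$ term is the bookkeeping over $\Sigma$; the term $s^{D'}k^d d^{O(D'D'')}$ accounts for enumerating closed sign conditions and the combinatorial reduction; the term $s^{D'}d^{O(D'')}(k\omega D)^{O(D''')}$ is $s^{D'}d^{O(D'')}$ calls (one per closed sign condition, each on a polynomial $Q$ of degree $O(D)$ with the blow-up $D=d(D''\log d + D'\log s)$ in degree coming from taking sums of squares of the perturbed $P_i$'s) to Theorem \ref{thm:algorithm-algebraic}, whose cost is $(\omega k d)^{O(\sum \min(k_i,2D))} = (k\omega D)^{O(D''')}$; and the matrix inversions are the linear-algebra step (4).

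The main obstacle I expect is step (3) combined with the symmetry constraint: the usual reduction of a single (closed, bounded) sign condition to a zero set uses a sum-of-squares construction $\sum_i(P_i^2 - \text{something})$ together with a variable $X_{k+1}$ for the bounding sphere, and one must verify that after this construction the ambient symmetry group is still exactly $\mathfrak{S}_{\mathbf{k}}$ acting on the original blocks (with the new variables fixed), so that Theorem \ref{thm:algorithm-algebraic} applies with the \emph{same} $\mathbf{k}$ and only a constant-bounded increase in the number of non-symmetric variables and in the degree — otherwise the complexity would not come out polynomial in $k$. A second, more technical point is controlling the degree growth: naively squaring and combining $s$ polynomials of degree $d$ to encode a sign condition costs degree $O(sd)$, which is too much; instead one must use the fact (again from the quantitative symmetric theory) that only $D' = \sum\min(k_i,d)$ of the polynomials are ``effectively in general position'' on the symmetric stratification, so the degree of $Q$ can be kept at $O(D)$ with $D = d(D''\log d + D'\log s)$ as stated. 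Handling the non-compactness correctly — replacing $S$ by a homotopy-equivalent closed bounded set as in \cite[Chapter 6]{BPRbook3} while preserving symmetry, and ensuring the generalized (not merely topological) Euler-Poincar\'e characteristic is what gets computed — is the remaining delicate bookkeeping, but it is a symmetric version of an argument already present in \cite{BPR-euler-poincare}.
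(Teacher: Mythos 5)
Your overall architecture -- additivity of $\chi^{\gen}$ over the sign conditions in $\Sigma$, a matrix inversion of size $s^{D'}d^{O(D'')}$, repeated calls to the algebraic algorithm of Theorem \ref{thm:algorithm-algebraic}, and the three-term complexity accounting -- is the same skeleton as the paper's Algorithms \ref{alg:sampling}, \ref{alg:ep-sign-conditions} and \ref{alg:ep-sa}. However, there is a genuine gap at your central technical step, namely the combination of your steps (2) and (3). The paper does not express $\chi^{\gen}(\RR(\sigma,\R^k))$ through closed sign conditions each encoded as a single symmetric zero set. It uses the sign-determination machinery: one computes Euler-Poincar\'e queries $\EQ(\mathcal{P}^{\alpha})=\chi^{\gen}(\mathcal{P}^{\alpha}>0)-\chi^{\gen}(\mathcal{P}^{\alpha}<0)$ for a list $A\subset\{0,1,2\}^{\mathcal{P}}$ \emph{adapted to sign determination}, and inverts the matrix of signs $\mathrm{Mat}(A,\SIGN(\mathcal{P}))$; each single query is reduced to two symmetric algebraic sets via $Q_{\pm}=Q^{2}+(\mathcal{P}^{\alpha}\mp X_{k+1}^{2})^{2}$ with one slack variable (Algorithm \ref{12:alg:speuler}). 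The degree bound $D=d(D''\log d+D'\log s)$ is not obtained by arguing that ``only $D'$ of the polynomials are effectively in general position so the degree of $Q$ can be kept at $O(D)$''; that reasoning is not valid. A closed sign condition constrains the signs of all $s$ polynomials, so encoding it as the zero set of one polynomial costs degree $\Omega(sd)$ (or $s$ slack variables); general position only controls how many of the \emph{perturbed} polynomials can vanish simultaneously, hence the cardinality of $\SIGN(\mathcal{P})$, not how many polynomials a given sign condition involves. The actual source of the bound is Proposition 10.84 of \cite{BPRbook3}: since $\card(\SIGN(\mathcal{P}))\leq s^{D'}d^{O(D'')}$, the adapted list $A$ can be chosen so that each $\mathcal{P}^{\alpha}$ is a product of at most $\log\card(\SIGN(\mathcal{P}))=O(D''\log d+D'\log s)$ factors of the form $P$ or $P^{2}$, and it is these low-degree products, not encodings of whole sign conditions, that are fed to Theorem \ref{thm:algorithm-algebraic}. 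Your proposed inclusion-exclusion over closed conditions is neither set up precisely nor shown to yield an invertible system of the stated size, so the step where the matrix inversion ``enters'' is unsubstantiated.

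A second, smaller gap: you cannot simply ``work directly with the given $\Sigma$''. Both the construction of the adapted matrix and the final summation require knowing $\SIGN(\mathcal{P})$ (the paper takes $\Gamma=\SIGN(\mathcal{P})\cap\Sigma$), and computing $\SIGN(\mathcal{P})$ within the stated complexity is itself a substantial ingredient: it is the paper's Algorithm \ref{alg:sampling}, an equivariant sampling algorithm using the perturbed families $P_i\pm\delta_i$, $P_i\pm\gamma\delta_i$, the $D'$-general position lemma, the half-degree principle (with $\ell_i\leq 4d$), Algebraic Sampling, Limit of Bounded Points and Univariate Sign Determination, performed over a ring with $O(D')$ infinitesimals. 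This is exactly what produces the $s^{D'}k^{d}d^{O(D'D'')}$ term and the bound $\card(\SIGN(\mathcal{P}))=s^{D'}d^{O(D'')}$ (Proposition \ref{prop:SIGN}); citing the Timofte--Riener results gives a qualitative test-point statement, not an algorithm or the quantitative bound you need.
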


Notice that the complexity in the semi-algebraic case is still polynomial in
$k$ for fixed $d$ and $s $ in the special case when $\omega =1$, and
$\mathbf{k}= (k)$.
Also note that, as a consequence of Proposition \ref{prop:SIGN} below, the number of sign conditions with non-empty realizations  in $\Sigma$ is bounded by $s^{D'} d^{O (D'')}$.

\begin{remark}
  An important point to note is that we give algorithms for computing
  both the ordinary as well as the equivariant 
  generalized 
  Euler-Poincar{\'e} 
  characteristics. For varieties or semi-algebraic sets defined by symmetric
  polynomials with degrees bounded by a constant, the ordinary
  generalized 
  Euler-Poincar{\'e} characteristic can be exponentially large in the
  dimension $k$. Nevertheless, our algorithms for computing it have
  complexities which are bounded polynomially in $k$ for fixed degree. 
\end{remark}

\subsection{Outline of the main techniques}

Efficient algorithms (with singly exponential complexity) 
for computing the Euler-Poincar\'e characteristics of semi-algebraic sets
\cite[Chapter 13]{BPRbook3}
usually proceed by first making a
deformation to a set defined by one inequality with smooth boundary and
non-degenerate critical points with respect to some affine function.
Furthermore, the new set is homotopy equivalent to the given variety and 
the Euler-Poincar\'e characteristic of this new set can be computed from certain local data 
in the neighborhood of each critical point (see \cite[Chapter 13]{BPRbook3} for more detail).
Since the number of critical points is at most singly exponential in number, such algorithms have a singly 
exponential complexity.

The approach
used in this paper for computing the Euler-Poincar\'e characteristics for symmetric
semi-algebraic sets  is  similar -- but differs on two  important points. 
Firstly, unlike in the general case, we are aiming here for an algorithm with polynomial
complexity (for fixed $d$). This requires that
the perturbation, as well as the Morse function both
need to be equivariant.  The choices are more restrictive (see Proposition
\ref{prop:non-degenerate}). 

Secondly, the topological changes at the
Morse critical points need to be analyzed more carefully (see Lemmas
\ref{lem:equivariant_morseA} and \ref{lem:equivariant_morseB}). The main
technical tool that makes the good dependence on the degree $d$ of the
polynomial possible is the so called ``half-degree principle''
{\cite{Riener,Timofte03}} (see 
Proposition \ref{prop:half-degree}), and this is what we use rather than
the Bezout bound to bound the number of (orbits of) critical points.
The proofs of these results appear in \cite{BC2013}, where they are used to prove bounds on the equivariant Betti numbers of semi-algebraic sets. 

Using these results, we
prove exact formulas for the ordinary as well as the equivariant
Euler-Poincar\'e characteristic of symmetric varieties (see \eqref{eqn:equivariant-ep1} and \eqref{eqn:equivariant-ep2} in Theorem \ref{thm:equivariant-ep}),
which form the basis of the algorithms described in this paper.

We adapt several non-equivariant algorithms from \cite{BPRbook3} to the
equivariant setting. The proofs of correctness of the algorithms described for
computing the ordinary as well as the equivariant (generalized)
Euler-Poincar{\'e} characteristics of algebraic as well as semi-algebraic
sets (Algorithms \ref{alg:ep-general-BM}, \ref{alg:ep-sign-conditions} and
\ref{alg:ep-sa}) follow from the equivariant Morse lemmas (Lemmas
\ref{lem:equivariant_morseA} and \ref{lem:equivariant_morseB}). The
complexity analysis follows from the complexities of similar algorithms in the
non-equivariant case \cite{BPRbook3}, but using the half-degree principle
referred to above. In the design of Algorithms
\ref{alg:ep-general-BM}, \ref{alg:ep-sign-conditions} and \ref{alg:ep-sa} we need
to use several subsidiary algorithms which are closely adapted from the
corresponding algorithms in the non-equivariant situation described in \cite{BPRbook3}. 
In particular, one of them, an algorithm for
computing the set of realizable sign conditions of a family of symmetric
polynomial (Algorithm \ref{alg:sampling}), whose complexity is
polynomial in the dimension for fixed degree could be of independent interest.

The rest of the paper is organized as follows. In \S \ref{sec:prelim}, we
recall certain facts from real algebraic geometry and topology that are needed
in the algorithms described in the paper. These include definitions of certain real
closed extensions of the ground field $\R$ consisting of algebraic Puiseux
series with coefficients in $\R$. We also recall some basic additivity properties of the
Euler-Poincar\'e characteristic.  In \S \ref{sec:deformation}, we define certain equivariant
deformations of symmetric varieties and state some topological properties of
these deformations, that mirror similar ones in the non-equivariant case. The proofs of these
properties appear in \cite{BC2013} and we give appropriate pointers where they can be found in that paper.  
In \S
\ref{sec:algorithms} we describe the algorithms for computing the
Euler-Poincar{\'e} characteristics of symmetric semi-algebraic sets proving
Theorems \ref{thm:algorithm-algebraic} and \ref{thm:algorithm-sa}.

\section{Mathematical Preliminaries}\label{sec:prelim}

In this section we recall some basic facts about real closed fields and real
closed extensions.

\subsection{Real closed extensions and Puiseux series}We will need some
properties of Puiseux series with coefficients in a real closed field. We
refer the reader to \cite{BPRbook3} for further details.

\begin{notation}
  For $\R$ a real closed field we denote by $\R \left\langle \eps
  \right\rangle$ the real closed field of algebraic Puiseux series in $\eps$
  with coefficients in $\R$. We use the notation $\R \left\langle \eps_{1},
  \ldots, \eps_{m} \right\rangle$ to denote the real closed field $\R
  \left\langle \eps_{1} \right\rangle \left\langle \eps_{2} \right\rangle
  \cdots \left\langle \eps_{m} \right\rangle$. Note that in the unique
  ordering of the field $\R \left\langle \eps_{1}, \ldots, \eps_{m}
  \right\rangle$, $0< \eps_{m} \ll \eps_{m-1} \ll \cdots \ll \eps_{1} \ll 1$.
\end{notation}

\begin{notation}
  For elements $x \in \R \left\langle \eps \right\rangle$ which are bounded
  over $\R$ we denote by $\lim_{\eps}  x$ to be the image in $\R$ under the
  usual map that sets $\eps$ to $0$ in the Puiseux series $x$.
\end{notation}

\begin{notation}
\label{not:extension}
  If $\R'$ is a real closed extension of a real closed field $\R$, and $S
  \subset \R^{k}$ is a semi-algebraic set defined by a first-order formula
  with coefficients in $\R$, then we will denote by $\Ext(S, \R') \subset \R'^{k}$ the semi-algebraic subset of $\R'^{k}$ defined by
  the same formula. It is well-known that $\Ext(S, \R')$ does
  not depend on the choice of the formula defining $S$ \cite{BPRbook3}.
\end{notation}

\begin{notation}
\label{not:ball}
  For $\x \in \R^{k}$ and $r \in \R$, $r>0$, we will denote by $B_{k} (\x,r)$
  the open Euclidean ball centered at $\x$ of radius $r$, and we denote by
  $S^{k-1}(\x,r)$ the sphere of radius $r$ centered at $\x$.   If $\R'$ is a real
  closed extension of the real closed field $\R$ and when the context is
  clear, we will continue to denote by $B_{k} (\x,r)$ (respectively, $S^{k-1}(\x,r)$) the extension $\Ext(B_{k} (\x,r), \R')$ (respectively, $\Ext(S^{k-1}(\x,r),\R')$). This should not cause any confusion.
\end{notation}

\subsection{Tarski-Seidenberg transfer principle}
In some proofs that involve
Morse theory (see for example the proof of Lemma \ref{lem:equivariant_morseB}), where integration of gradient flows is used in an essential way, we
first restrict to the case $\R =\mathbb{R}$. After having proved the result
over $\mathbb{R}$, we use the Tarski-Seidenberg transfer theorem to extend
the result to all real closed fields. We refer the reader to 
\cite[Chapter 2]{BPRbook3} for an exposition of the Tarski-Seidenberg transfer
principle.

\subsection{Additivity property of the Euler-Poincar\'e characteristics}

We need the following additivity property of the Euler-Poincar\'e characteristics that
follow from the Mayer-Vietoris exact sequence.

\begin{proposition}
  \label{prop:MV}If $S_{1},S_{2}$ are closed semi-algebraic sets, then for
  any field $\F$ and every $i \geq 0,$
  \begin{eqnarray}
   \chi^{\mathrm{top}}(S_{1} \cup S_{2},\F) & = & \chi^{\mathrm{top}}(S_{1},\F) +
    \chi^{\mathrm{top}}(S_{2},\F) - \chi^{\mathrm{top}}(S_{1} \cap S_{2},\F). 
    \label{eqn:MV2-ep}
  \end{eqnarray}
\end{proposition}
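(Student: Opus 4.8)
The plan is to derive \eqref{eqn:MV2-ep} from the Mayer--Vietoris long exact sequence, after first dealing with the two features that separate the present setting from the classical one: the sets $S_1,S_2$ need not be bounded, and the coefficient field $\R$ need not be archimedean. First I would reduce to the closed and bounded case. Choose $t\in\R\langle\eps\rangle$ positive and infinitely large over $\R$ (e.g. $t=1/\eps$), and put $\bar B=\overline{B_{k}(0,t)}\subset\R\langle\eps\rangle^{k}$. For any closed semi-algebraic $S\subseteq\R^{k}$ the set $\Ext(S,\R\langle\eps\rangle)\cap\bar B$ is closed, bounded, and semi-algebraically homotopy equivalent to $\Ext(S,\R\langle\eps\rangle)$ by the conic structure of semi-algebraic sets at infinity (see \cite[Chapters 5--6]{BPRbook3}), hence has the same Betti numbers, and therefore the same $\chi^{\mathrm{top}}(\cdot,\F)$, as $S$, using that homology is invariant under real closed extension. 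Since intersecting with $\bar B$ commutes with finite unions and intersections, it suffices to prove the identity for the closed bounded sets $\Ext(S_1,\R\langle\eps\rangle)\cap\bar B$ and $\Ext(S_2,\R\langle\eps\rangle)\cap\bar B$ over $\R\langle\eps\rangle$; so from now on I assume $S_1,S_2$ closed and bounded.

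Next I would invoke semi-algebraic triangulation: there is a semi-algebraic triangulation of $S_1\cup S_2$ compatible with both $S_1$ and $S_2$, so that $S_1$, $S_2$, $S_1\cup S_2$ and $S_1\cap S_2$ are realized by subcomplexes of one finite simplicial complex, and (by the definition of $\HH_i(\cdot,\F)$ recalled above for closed bounded sets) all four homology groups in \eqref{eqn:MV2-ep} are the simplicial homology groups of these subcomplexes. For a pair of subcomplexes $K_1,K_2$ the sequence of chain complexes
\[
0\to C_\bullet(K_1\cap K_2,\F)\to C_\bullet(K_1,\F)\oplus C_\bullet(K_2,\F)\to C_\bullet(K_1\cup K_2,\F)\to 0
\]
is short exact, and its long exact homology sequence is the Mayer--Vietoris sequence
\[
\cdots\to\HH_i(S_1\cap S_2,\F)\to\HH_i(S_1,\F)\oplus\HH_i(S_2,\F)\to\HH_i(S_1\cup S_2,\F)\to\HH_{i-1}(S_1\cap S_2,\F)\to\cdots.
\]
(Alternatively, by the Tarski--Seidenberg transfer principle one may assume $\R=\mathbb{R}$ and quote the classical Mayer--Vietoris sequence for the closed pair $(S_1,S_2)$.)

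Finally, all terms of this sequence are finite-dimensional $\F$-vector spaces that vanish outside a finite range, so the alternating sum of their dimensions along the sequence is $0$. Grouping consecutive terms in triples $\big(\HH_i(S_1\cap S_2),\ \HH_i(S_1)\oplus\HH_i(S_2),\ \HH_i(S_1\cup S_2)\big)$ and bookkeeping signs (the two outer terms of each triple contribute with sign $(-1)^i$, the middle one with sign $-(-1)^i$), the sum telescopes to
\[
\sum_{i\ge 0}(-1)^i\big(b_i(S_1\cap S_2,\F)-b_i(S_1,\F)-b_i(S_2,\F)+b_i(S_1\cup S_2,\F)\big)=0,
\]
which rearranges to \eqref{eqn:MV2-ep}. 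The only genuinely nontrivial point is the first reduction — one must ensure the closed bounded homotopy model can be taken simultaneously for $S_1$, $S_2$, $S_1\cap S_2$ and $S_1\cup S_2$, which is exactly why intersecting with one common large ball, rather than invoking an abstract homotopy model, is the convenient device; once everything is closed and bounded, the remainder is the classical homological argument.
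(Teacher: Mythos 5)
Your argument is correct, and it is essentially the proof the paper has in mind: the paper simply cites \cite[Proposition 6.36]{BPRbook3}, which is exactly this Mayer--Vietoris additivity for closed semi-algebraic sets, established by the same route (reduction to a closed and bounded homotopy model, semi-algebraic triangulation compatible with $S_1$ and $S_2$, and the alternating-sum argument on the simplicial Mayer--Vietoris long exact sequence). The only cosmetic difference is that you pass to $\R\langle\eps\rangle$ with an infinitely large radius, whereas one common radius $r\in\R$ large enough for all four closed sets already suffices; this does not affect correctness.
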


\begin{proof}
See for example
\cite[Proposition 6.36]{BPRbook3}.
\end{proof}

We also recall the definition of the
Borel-Moore homology groups of locally closed semi-algebraic sets and some of its properties.

\subsection{Borel-Moore homology groups}
\begin{definition}
  \label{def:BM}Let $S \subset \R^{k}$ be a locally closed semi-algebraic
  set and let $S_{r} =S \cap B_{k} (0,r)$. The \emph{$p$-th Borel-Moore
  homology group} of $S$ with
  coefficients in a field $\F$, denoted by
  $\HH_{p}^{ \mathrm{BM}} (S,\F)$,\label{6:not-28} is
  defined to be the $p$-th simplicial homology group of the pair $\left(
  \overline{S_{r}}, \overline{S_{r}} \setminus S_{r} \right)$ with
  coefficients in $\F$, for large enough $r>0$. 
\end{definition}

\begin{notation}
  \label{not:ep-BM}For any locally closed semi-algebraic set $S$ we denote
  \begin{eqnarray*}
    \chi^{ \mathrm{BM}} (S,\F) & = & \sum_{i \geq
    0} (-1)^{i}   \dim_{\F}  \HH_{i}^{ \mathrm{BM}}
    (S,\F).
  \end{eqnarray*}
\end{notation}

It follows immediately from the exact sequence of the homology of the pair
$\left(\overline{S_{r}}, \overline{S_{r}} \setminus S_{r} \right)$ that

\begin{proposition}
  \label{prop:BM-pair}If $S$ is a locally closed semi-algebraic set then for
  all $r>0$ large enough
  \begin{eqnarray*}
    \chi^{ \mathrm{BM}} (S,\mathbb{Q}) & = & \chi^{\mathrm{top}} \left(
    \overline{S_{r}},\mathbb{Q} \right) - \chi^{\mathrm{top}}(S \cap S^{k-1} (0,r)
   ,\mathbb{Q}).
  \end{eqnarray*}
\end{proposition}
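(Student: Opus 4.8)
The statement to prove is Proposition~\ref{prop:BM-pair}:
\begin{eqnarray*}
\chi^{\mathrm{BM}}(S,\Q) &=& \chi^{\mathrm{top}}(\overline{S_r},\Q) - \chi^{\mathrm{top}}(S \cap S^{k-1}(0,r),\Q)
\end{eqnarray*}
for $S$ locally closed semi-algebraic and $r>0$ large enough.

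The plan is to read off the result directly from the long exact sequence of the pair $(\overline{S_r},\,\overline{S_r}\setminus S_r)$ in simplicial homology with $\Q$-coefficients, which is exactly how $\HH_p^{\mathrm{BM}}(S,\Q)$ is \emph{defined} (Definition~\ref{def:BM}). The key algebraic fact I will invoke is that the Euler characteristic is additive along a long exact sequence: if $\cdots \to A_i \to B_i \to C_i \to A_{i-1} \to \cdots$ is exact with all terms finite-dimensional $\Q$-vector spaces and only finitely many nonzero, then $\sum_i (-1)^i \dim A_i - \sum_i (-1)^i \dim B_i + \sum_i (-1)^i \dim C_i = 0$. Applying this to the long exact sequence of the pair gives
\[
\chi^{\mathrm{top}}(\overline{S_r}\setminus S_r,\Q) - \chi^{\mathrm{top}}(\overline{S_r},\Q) + \chi^{\mathrm{BM}}(S,\Q) = 0,
\]
since the relative homology $\HH_p(\overline{S_r},\overline{S_r}\setminus S_r;\Q)$ is by definition $\HH_p^{\mathrm{BM}}(S,\Q)$ for $r$ large. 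So it remains only to identify $\chi^{\mathrm{top}}(\overline{S_r}\setminus S_r,\Q)$ with $\chi^{\mathrm{top}}(S\cap S^{k-1}(0,r),\Q)$.

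For that identification I would argue as follows. Since $S$ is locally closed, write $S = U \cap Z$ with $U$ open and $Z$ closed; then $S$ is open in its closure $\overline{S}$, so $\overline{S}\setminus S$ is closed. For $r$ large enough (using that $S$ is semi-algebraic, hence only finitely many ``topological types'' occur as $r\to\infty$, cf.\ the local conic structure and the generic triviality of semi-algebraic families over an interval, \cite[Chapter 5]{BPRbook3}), the set $\overline{S_r}$ equals $\overline{S}\cap \overline{B_k(0,r)}$, and its boundary part decomposes as $\overline{S_r}\setminus S_r = (\overline{S}\setminus S)\cap \overline{B_k(0,r)} \,\cup\, (\overline{S}\cap S^{k-1}(0,r))$. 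The first piece is contractible onto... no --- rather, the cleaner route is to use the deformation retraction furnished by the conic structure at infinity: for $r$ large, $\overline{S_r}\setminus S_r$ semi-algebraically deformation retracts onto $S\cap S^{k-1}(0,r)$, or more precisely one shows directly that $\overline{S_r}\setminus S_r$ is semi-algebraically homotopy equivalent to $S\cap S^{k-1}(0,r)$; this is standard and is the content of the computation underlying the Borel--Moore/closure relation in \cite[Chapter 6]{BPRbook3}. Homotopy equivalence preserves $\chi^{\mathrm{top}}$, giving $\chi^{\mathrm{top}}(\overline{S_r}\setminus S_r,\Q) = \chi^{\mathrm{top}}(S\cap S^{k-1}(0,r),\Q)$, and combining with the displayed Euler-characteristic identity yields the proposition. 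For non-archimedean $\R$ one first proves everything over $\mathbb{R}$ and transfers via Tarski--Seidenberg, as indicated in \S\ref{sec:prelim}.

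The main obstacle is the geometric identification in the last paragraph: making precise, over an arbitrary real closed field and for a general locally closed semi-algebraic $S$, that for all sufficiently large $r$ the ``non-$S$ part of the closure of the truncation,'' $\overline{S_r}\setminus S_r$, is homotopy equivalent to the spherical slice $S\cap S^{k-1}(0,r)$. This requires the semi-algebraic triviality of the family $\{\overline{S}\cap \overline{B_k(0,r)}\}_{r}$ for large $r$ together with a careful bookkeeping of which strata lie in $S$ versus in $\overline{S}\setminus S$; everything else (the long exact sequence, additivity of $\chi$) is formal. Since the excerpt explicitly defers this construction to \cite[Chapter 6]{BPRbook3}, in the write-up I would simply cite it rather than reprove it, and present the proof as: apply the long exact sequence of the pair, use additivity of Euler characteristic, and substitute the homotopy equivalence $\overline{S_r}\setminus S_r \simeq S\cap S^{k-1}(0,r)$.
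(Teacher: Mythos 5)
Your first step---the long exact sequence of the pair $(\overline{S_r},\overline{S_r}\setminus S_r)$ together with additivity of the Euler characteristic along an exact sequence, giving $\chi^{\mathrm{BM}}(S,\Q)=\chi^{\mathrm{top}}(\overline{S_r},\Q)-\chi^{\mathrm{top}}(\overline{S_r}\setminus S_r,\Q)$---is exactly the content of the paper's proof, which consists of the single remark that the proposition ``follows immediately from the exact sequence of the homology of the pair.'' The genuine problem is the bridge you then build, and it cannot be repaired by citing \cite[Chapter 6]{BPRbook3}: the asserted semi-algebraic deformation retraction (or homotopy equivalence) of $\overline{S_r}\setminus S_r$ onto $S\cap S^{k-1}(0,r)$ for large $r$ is false for a general locally closed $S$. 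Take $S=B_k(0,1)$, the open unit ball, and any $r>1$: then $S_r=S$, $\overline{S_r}=\overline{B_k(0,1)}$, so $\overline{S_r}\setminus S_r=S^{k-1}(0,1)$ is a sphere, whereas $S\cap S^{k-1}(0,r)=\emptyset$. The set $\overline{S_r}\setminus S_r$ always contains the ``frontier'' part $(\overline{S}\setminus S)\cap \overline{B_k(0,r)}$, and no choice of large $r$ makes that part disappear or retract onto the spherical slice.

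In fact the same example shows that the statement, taken literally for all locally closed $S$, is not provable at all: for $S=(-1,1)\subset\R^1$ one has $\chi^{\mathrm{BM}}(S,\Q)=-1$, while $\chi^{\mathrm{top}}(\overline{S_r},\Q)-\chi^{\mathrm{top}}(S\cap S^{0}(0,r),\Q)=1-0=1$. The identity is correct, and really is immediate from the exact sequence with no homotopy argument needed, when $S$ is \emph{closed}: then $\overline{S_r}\setminus S_r\subset S\cap S^{k-1}(0,r)$, and for all sufficiently large $r$ these two sets coincide, so the pair defining $\HH_*^{\mathrm{BM}}$ is exactly $\bigl(\overline{S_r},\,S\cap S^{k-1}(0,r)\bigr)$. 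That closed case is also the only one the paper actually uses (through Propositions \ref{prop:ep-gen-BM} and \ref{prop:ep-unbounded}); for general locally closed $S$ the correct conclusion of your formal argument is the displayed formula with $\chi^{\mathrm{top}}(\overline{S_r}\setminus S_r,\Q)$ on the right, and one must stop there. So: your exact-sequence bookkeeping matches the paper, but the homotopy equivalence you interpose between $\overline{S_r}\setminus S_r$ and $S\cap S^{k-1}(0,r)$ is a genuine gap---it is not standard, not in the cited reference in that form, and is simply untrue without a closedness hypothesis on $S$.
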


It follows from the fact that $\chi^{\mathrm{BM}}(\cdot,\Q)$ is additive for locally closed semi-algebraic sets
(cf. \cite[Proposition 6.60]{BPRbook3}), and the uniqueness of the valuation $\chi^{\mathrm{gen}}(\cdot)$ that:

\begin{proposition}
  \label{prop:ep-gen-BM}
  If $S$ is a locally closed semi-algebraic set, then
  \begin{eqnarray*}
    \chi^{\gen} (S) & = &
    \chi^{ \mathrm{BM}} (S,\mathbb{Q}).
  \end{eqnarray*}
  Moreover, if $S$ is a closed and bounded semi-algebraic set then,
  \[
   \chi^{\gen} (S) =  \chi^{ \mathrm{BM}} (S,\mathbb{Q}) = \chi^{\mathrm{top}}(S,\mathbb{Q}).
   \]
    \end{proposition}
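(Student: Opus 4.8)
\textbf{Proof proposal for Proposition \ref{prop:ep-gen-BM}.}

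The plan is to establish the first identity by invoking the uniqueness characterization of the generalized Euler--Poincar\'e characteristic (Definition \ref{def:ep-general}): since $\chi^{\gen}$ is the \emph{unique} additive, multiplicative, semi-algebraically invariant valuation on semi-algebraic sets normalized so that $\chi^{\gen}([0,1]) = 1$, it suffices to show that the assignment $S \mapsto \chi^{\mathrm{BM}}(S,\Q)$, restricted to locally closed semi-algebraic sets, agrees with $\chi^{\gen}$. First I would recall that $\chi^{\mathrm{BM}}(\cdot,\Q)$ is additive on locally closed semi-algebraic sets, which is the content of \cite[Proposition 6.60]{BPRbook3} cited just above the statement; this is the key structural input. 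Next I would check the normalization: for $S = [0,1]$, which is closed and bounded, $S_r = S$ for large $r$ and $\overline{S_r}\setminus S_r = \emptyset$, so $\HH^{\mathrm{BM}}_p([0,1],\Q) = \HH_p([0,1],\Q)$, giving $\chi^{\mathrm{BM}}([0,1],\Q) = 1$. Multiplicativity of $\chi^{\mathrm{BM}}$ follows from the K\"unneth formula for Borel--Moore homology of locally closed sets, and invariance under semi-algebraic homeomorphisms is immediate from the definition (or can be reduced to the closed-bounded case via the pair $(\overline{S_r}, \overline{S_r}\setminus S_r)$). Having verified that $S \mapsto \chi^{\mathrm{BM}}(S,\Q)$ satisfies all four defining properties on the class of semi-algebraic sets where it is defined, uniqueness forces $\chi^{\gen}(S) = \chi^{\mathrm{BM}}(S,\Q)$ for every locally closed $S$.

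For the second identity, assume $S$ is closed and bounded. Then for $r$ large enough $S \subset B_k(0,r)$, so $S_r = S$, $\overline{S_r} = S$, and $\overline{S_r}\setminus S_r = \emptyset$; hence $\HH^{\mathrm{BM}}_p(S,\Q) = \HH_p(\overline{S_r},\overline{S_r}\setminus S_r;\Q) = \HH_p(S,\Q)$ for all $p$, which gives $\chi^{\mathrm{BM}}(S,\Q) = \chi^{\mathrm{top}}(S,\Q)$. Combined with the first part this yields the chain $\chi^{\gen}(S) = \chi^{\mathrm{BM}}(S,\Q) = \chi^{\mathrm{top}}(S,\Q)$.

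The main obstacle is not a difficult argument but rather making sure the hypotheses of the uniqueness theorem from \cite{Dries} are literally met: one must be slightly careful that Borel--Moore homology, and hence $\chi^{\mathrm{BM}}$, is genuinely \emph{multiplicative} and \emph{homeomorphism-invariant} as a valuation, not merely additive. The cleanest route is to cite \cite[Proposition 6.60]{BPRbook3} for additivity and the normalization computation above for $\chi^{\mathrm{BM}}([0,1],\Q)=1$, and then note that additivity together with the values on points and closed cells already pins down the valuation on all semi-algebraic sets by the stratification into locally closed pieces — so that the multiplicativity and invariance of $\chi^{\gen}$ are inherited automatically rather than needing to be re-proved for $\chi^{\mathrm{BM}}$ directly. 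This is exactly how the sentence preceding the proposition is phrased ("$\chi^{\mathrm{BM}}(\cdot,\Q)$ is additive \ldots and the uniqueness of the valuation $\chi^{\mathrm{gen}}(\cdot)$"), so the proof is genuinely short: additivity $+$ correct value on a point (equivalently on $[0,1]$) $+$ uniqueness.
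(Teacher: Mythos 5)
Your proposal is correct and takes essentially the same route as the paper, which derives the proposition from the additivity of $\chi^{\mathrm{BM}}(\cdot,\mathbb{Q})$ on locally closed sets (\cite[Proposition 6.60]{BPRbook3}) together with the uniqueness of the valuation $\chi^{\gen}$, the closed-and-bounded case being immediate from Definition \ref{def:BM} since $\overline{S_r}=S$ and $\overline{S_r}\setminus S_r=\emptyset$. Your closing remark correctly identifies that additivity plus the normalization (rather than re-proving multiplicativity and invariance for $\chi^{\mathrm{BM}}$) is the cleanest way to meet the hypotheses, which is exactly how the paper phrases it.
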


The following proposition is an immediate consequence of Definition
\ref{def:BM}, Notation \ref{not:ep-BM} and Propositions \ref{prop:BM-pair} and
\ref{prop:ep-gen-BM}.

\begin{proposition}
  \label{prop:ep-unbounded} Let $S \subset \R^{k}$ be a closed semi-algebraic
  set.Then,
  \begin{eqnarray*}
    \chi^{\gen} (S) & = &
    \chi^{\gen} \left(S \cap \overline{B_{k} (0,r
   )} \right) - \chi^{\gen} (S \cap S^{k-1} (0,r
   ))
  \end{eqnarray*}
  for all large enough $r>0$. 
\end{proposition}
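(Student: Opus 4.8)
The plan is to derive Proposition~\ref{prop:ep-unbounded} directly from the material already assembled, with essentially no new geometric input. Let $S\subset\R^k$ be closed semi-algebraic. First I would fix $r>0$ large enough that all the stabilization statements in play hold simultaneously: large enough that $\HH_p^{\mathrm{BM}}(S,\F)$ is computed by the pair $(\overline{S_r},\overline{S_r}\setminus S_r)$ as in Definition~\ref{def:BM} (so that Proposition~\ref{prop:BM-pair} applies), and, using the local conic structure of semi-algebraic sets at infinity, large enough that the sphere $S^{k-1}(0,r)$ is transverse to $S$ in the sense that $S\cap\overline{B_k(0,r)}$ is a semi-algebraic deformation retract of $S$ and the topological type of the pair $(S\cap\overline{B_k(0,r)},\,S\cap S^{k-1}(0,r))$ is independent of $r$ for $r$ large. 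Such an $r$ exists because each of these is governed by finitely many semi-algebraic conditions on $r$, hence holds on an unbounded interval.

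Next I would observe that since $S$ is closed, $S\cap\overline{B_k(0,r)} = \overline{S_r}$ (here $S_r = S\cap B_k(0,r)$), so $\overline{S_r}$ is a closed and bounded semi-algebraic set, and likewise $S\cap S^{k-1}(0,r)$ is closed and bounded. Applying the second assertion of Proposition~\ref{prop:ep-gen-BM} to each of these two sets gives
\[
\chi^{\gen}\!\left(S\cap\overline{B_k(0,r)}\right) = \chi^{\mathrm{top}}\!\left(\overline{S_r},\Q\right),\qquad
\chi^{\gen}\!\left(S\cap S^{k-1}(0,r)\right) = \chi^{\mathrm{top}}\!\left(S\cap S^{k-1}(0,r),\Q\right).
\]
On the other hand, the first assertion of Proposition~\ref{prop:ep-gen-BM} applied to $S$ itself (which is in particular locally closed) gives $\chi^{\gen}(S) = \chi^{\mathrm{BM}}(S,\Q)$. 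Now I would invoke Proposition~\ref{prop:BM-pair}, which for $r$ large enough expresses $\chi^{\mathrm{BM}}(S,\Q)$ as $\chi^{\mathrm{top}}(\overline{S_r},\Q) - \chi^{\mathrm{top}}(S\cap S^{k-1}(0,r),\Q)$. Substituting the two identities above into the right-hand side yields exactly
\[
\chi^{\gen}(S) = \chi^{\gen}\!\left(S\cap\overline{B_k(0,r)}\right) - \chi^{\gen}\!\left(S\cap S^{k-1}(0,r)\right),
\]
which is the claimed formula.

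The only subtlety — and the step I would be most careful about — is the choice of $r$: Proposition~\ref{prop:BM-pair} and Definition~\ref{def:BM} each come with their own ``for all large enough $r$'' clause, and one must take $r$ beyond the maximum of the finitely many thresholds involved, which is legitimate since each threshold is determined semi-algebraically (ultimately by the existence of a ball containing all the relevant critical values at infinity, or by the local conic structure theorem over a real closed field, cf.\ \cite[Chapter 5]{BPRbook3}). Everything else is a formal manipulation of the additivity and comparison statements already proved in Propositions~\ref{prop:BM-pair} and~\ref{prop:ep-gen-BM}; no Mayer--Vietoris argument beyond what is packaged in those propositions is needed, and no new transfer argument is required since all the cited results already hold over an arbitrary real closed field.
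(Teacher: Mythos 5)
Your proposal is correct, but it takes a genuinely different route from the paper's. The paper's own proof is a two-line application of the conic structure theorem at infinity together with the additivity of $\chi^{\gen}$: for $r$ large, $S$ is semi-algebraically homeomorphic to $S \cap B_{k}(0,r)$, and $S \cap \overline{B_{k}(0,r)}$ is the disjoint union of $S \cap B_{k}(0,r)$ and $S \cap S^{k-1}(0,r)$, so additivity and homeomorphism-invariance of $\chi^{\gen}$ give the formula immediately. You instead route everything through Borel--Moore homology, combining $\chi^{\gen}(S)=\chi^{\mathrm{BM}}(S,\mathbb{Q})$ from Proposition \ref{prop:ep-gen-BM} with the pair formula of Proposition \ref{prop:BM-pair} and the identification $\chi^{\gen}=\chi^{\mathrm{top}}$ on the two closed and bounded pieces; this is a legitimate derivation from results already stated in the paper, and it buys a purely formal substitution argument at the price of juggling the several ``for all large enough $r$'' clauses, which you handle appropriately. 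One small point of attribution: the identity $S\cap\overline{B_{k}(0,r)}=\overline{S_{r}}$ does not follow from closedness of $S$ alone (closedness only gives $\overline{S_{r}}\subseteq S\cap\overline{B_{k}(0,r)}$; a point of $S$ lying on the sphere but with $S$ locally outside the open ball would break equality). It does hold for all sufficiently large $r$, because by the conic structure at infinity the norm function on $S$ has no local minima beyond some radius, so every point of $S\cap S^{k-1}(0,r)$ is a limit of points of $S\cap B_{k}(0,r)$; since you have already taken $r$ beyond the conic-structure threshold this is covered, but the justification should be credited to that choice of $r$ rather than to closedness.
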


\begin{proof}By the theorem on conic structure of
semi-algebraic sets at infinity (see \cite[Proposition 5.49]{BPRbook3}) we have that $S$ is
semi-algebraically homeomorphic to $S \cap B_{k} (0,r)$ for all large enough
$r>0$. Also, note that $S \cap \overline{B_{k} (0,r)}$ is a disjoint union of
$S \cap B_{k} (0,r)$ and $S \cap S^{k-1} (0,r)$. The proposition follows
from the additivity of $\chi^{\tmop{gen}} (\cdot
)$.\end{proof}

\begin{corollary}
  \label{cor:additive}Let $S \subset \R^{k}$ be a $\mathcal{P}$-closed
  semi-algebraic set. Let $\Gamma \subset \{ 0,1,-1 \}^{\mathcal{P}}$ be the
  set of realizable sign conditions $\gamma$ on $\mathcal{P}$ such that 
  $\RR
  \left(\gamma, \R^{k} \right) \subset S.
  $
  Then,
  \begin{eqnarray*}
    \chi^{\gen} (S) & = & \sum_{\gamma \in
    \Gamma} \chi^{\gen} \left(\RR \left(\gamma,
    \R^{k} \right) \right).
  \end{eqnarray*}
  
\end{corollary}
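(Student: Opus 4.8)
\textbf{Proof plan for Corollary \ref{cor:additive}.}

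The plan is to exploit the additivity of the generalized Euler-Poincar\'e characteristic, but one must be careful: additivity as stated in Definition \ref{def:ep-general} is an inclusion-exclusion identity for arbitrary semi-algebraic sets, and it immediately implies that $\chi^{\gen}$ is finitely additive on \emph{disjoint} unions. A $\mathcal{P}$-closed semi-algebraic set $S$ is, up to the usual caveat, a union of realizations of sign conditions on $\mathcal{P}$; the sign conditions partition $\R^k$ into pairwise disjoint pieces $\RR(\gamma,\R^k)$, $\gamma \in \SIGN(\mathcal{P})$. So first I would observe that $S = \bigsqcup_{\gamma \in \Gamma'} \RR(\gamma,\R^k)$, where $\Gamma'$ is the set of realizable sign conditions $\gamma$ whose realization meets $S$. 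Then, by finite additivity of $\chi^{\gen}$ on disjoint unions (a formal consequence of property (3) applied inductively, with $\chi^{\gen}(\emptyset)=0$ following from property (3) with $A=B=\emptyset$), we get $\chi^{\gen}(S) = \sum_{\gamma \in \Gamma'} \chi^{\gen}(\RR(\gamma,\R^k))$.

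The second and only substantive step is to show $\Gamma' = \Gamma$, i.e. that a realizable sign condition $\gamma$ has $\RR(\gamma,\R^k)$ meeting $S$ if and only if $\RR(\gamma,\R^k) \subset S$. The key point here is that $S$ is $\mathcal{P}$-\emph{closed}: it is defined by a Boolean combination, without negations, of the weak conditions $P \geq 0$ and $P \leq 0$ for $P \in \mathcal{P}$. On a single sign-condition cell $\RR(\gamma,\R^k)$ each polynomial $P \in \mathcal{P}$ has constant sign $\gamma(P) \in \{0,1,-1\}$, so each atom $P \geq 0$ (resp. $P \leq 0$) is either identically true or identically false on that cell; hence the whole defining formula of $S$ has a constant truth value on $\RR(\gamma,\R^k)$. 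Therefore $\RR(\gamma,\R^k)$ is either entirely contained in $S$ or entirely disjoint from $S$, which is exactly the equivalence $\Gamma' = \Gamma$. Combining with the previous paragraph yields the claimed formula.

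I expect the main (minor) obstacle to be bookkeeping around the difference between \emph{all} sign conditions in $\{0,1,-1\}^{\mathcal{P}}$ and the \emph{realizable} ones, and the fact that strictly speaking $\R^k = \bigsqcup_{\gamma \in \SIGN(\mathcal{P})} \RR(\gamma,\R^k)$ only ranges over realizable $\gamma$; non-realizable ones contribute empty sets with $\chi^{\gen} = 0$, so they are harmless and can be silently dropped. One should also note the hypothesis that $S$ is closed is genuinely used only insofar as it guarantees the ``$\mathcal{P}$-closed'' description has no strict inequalities — which is what makes each atom constant on each cell. No real algebraic geometry beyond Definition \ref{def:ep-general} and the elementary partition-into-sign-conditions fact is needed; in particular Propositions \ref{prop:ep-gen-BM} and \ref{prop:ep-unbounded}, which the statement is advertised as a consequence of, are invoked only to justify that $\chi^{\gen}$ is a well-defined additive valuation on locally closed (here, semi-algebraic) sets, so that the finite-additivity induction is legitimate.
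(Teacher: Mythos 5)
Your proposal is correct and is essentially the paper's argument: the paper's proof is just ``Clear from the definition of the generalized Euler-Poincar\'e characteristic (Definition \ref{def:ep-general})'', and your write-up simply makes explicit the two facts this relies on, namely that the realizable sign conditions partition $\R^k$ and that a $\mathcal{P}$-closed formula has constant truth value on each cell $\RR(\gamma,\R^k)$, so $S$ is the disjoint union of the cells indexed by $\Gamma$ and finite additivity of $\chi^{\gen}$ gives the formula. One tiny slip in a parenthetical: property (3) with $A=B=\emptyset$ yields only the tautology $\chi^{\gen}(\emptyset)=\chi^{\gen}(\emptyset)$; to get $\chi^{\gen}(\emptyset)=0$ use instead multiplicativity with a set of Euler characteristic $\neq 1$ (e.g.\ $\chi^{\gen}((0,1))=-1$), or simply Proposition \ref{prop:ep-gen-BM} and the vanishing of Borel--Moore homology of the empty set.
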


\begin{proof}Clear from the definition of the generalized
Euler-Poincar{\'e} characteristic (Definition
\ref{def:ep-general}).\end{proof}

\section{Equivariant deformation\label{sec:deformation}}

In this section we recall the definition  of certain equivariant
deformations of symmetric real algebraic varieties  that were introduced in \cite{BC2013}. These are adapted from the
non-equivariant case (see for example \cite{BPRbook3}), but keeping
everything equivariant requires additional effort. 

\begin{notation}
For $i\in\N$ let $p_{i}^{(k)}:=\sum_{j=1}^{(k)} X_{j}^i$ denote the $i$-th Newton sum and 
  \label{not:def}for any $P \in \R [ X_{1}, \ldots,X_{k} ]$ we denote
  \[ \Def (P, \zeta,d) = P -  \zeta   \left(1+ p_{d}^{(k)}
     \right), \]
  where $\zeta$ is a new variable.
\end{notation}

Notice that if $P$ is symmetric in $X_{1}, \ldots,X_{k}$, so is $\Def (P,
\zeta,d)$.

\subsection{Properties of $\Def(P,\zeta,d)$}

We now state some key properties of the deformed polynomial $\Def(P,\zeta,d)$ that will be important
in proving the correctness, as well as the complexity analysis, of the algorithms presented later in the paper.
Most of these properties, with the exception of the key Theorem \ref{thm:equivariant-ep}, have been proved in \cite{BC2013} and we refer the reader to that paper for the proofs. We reproduce the statements below for ease of reading and completeness of the current paper.
 
\begin{proposition}\cite[Proposition 3]{BC2013}
  \label{prop:alg-to-semialg}
  Let $d \geq 0$ be even, $\mathbf{k}= (k_{1}, \ldots,k_{\omega}) \in \Z_{>0}^{\omega}$, with $k= \sum_{i=1}^{\omega} k_{i}$, 
  and $P \in \R [ \X^{(1)}, \ldots,\X^{(\omega)} ]_{\leq d}$, where each $\X^{(i)}$ is a
  block of $k_{i}$ variables, such that $P$ is non-negative and symmetric in
  each block of variable $\X^{(i)}$.  Also
  suppose that $V = \ZZ(P, \R^{k})$ is bounded.
Then,
  $\Ext(V, \R \langle \zeta \rangle^{k})$ is a semi-algebraic
  deformation retract of the (symmetric) semi-algebraic subset $S$ of $\R
  \langle \zeta \rangle^{k}$,
consisting of the union of the semi-algebraically connected components of the semi-algebraic set  
  defined by the inequality 
  $\Def (P, \zeta,d) \leq 0$,
  which are bounded over $\R$.
  Hence, $\Ext(V,\R\la\zeta\ra)$ is semi-algebraically homotopy equivalent to $S$.
  Moreover, $\phi_{\mathbf{k}} (\Ext(V, \R \langle \zeta \rangle^{k}))$
  is semi-algebraically homotopy  equivalent to $\phi_{\mathbf{k}} (S)$. 
\end{proposition}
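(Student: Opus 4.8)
The plan is to reduce the claim to the non-equivariant deformation lemma for the operator $\Def$ (in the style of \cite[Chapter 13]{BPRbook3}) and then to verify that every construction can be carried out $\mathfrak{S}_{\mathbf{k}}$-equivariantly, so that the retraction descends to the orbit space. First note that, since $d$ is even, $p_{d}^{(k)}=\sum_{j}X_{j}^{d}\ge 0$ on $\R^{k}$, so $1+p_{d}^{(k)}\ge 1>0$ everywhere; and since $P\ge 0$ one has $V=\ZZ(P,\R^{k})=\{\x\in\R^{k}\mid P(\x)\le 0\}$, while the inequality $\Def(P,\zeta,d)\le 0$ reads $P\le\zeta\,(1+p_{d}^{(k)})$ and therefore cuts out a ``sublevel tube'' around $V$, namely the $\zeta$-sublevel set of the nonnegative semi-algebraic function $f=P/(1+p_{d}^{(k)})$, which satisfies $f^{-1}(0)=V$. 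Since gradient/triviality arguments are involved, I would first establish the statement over $\R=\mathbb{R}$ (with $\zeta$ replaced by a positive real parameter $t$) and then transfer it to an arbitrary real closed field via the Tarski--Seidenberg principle.

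For the archimedean case, $V$ is compact, so $V\subset B_{k}(0,R)$ for some $R\in\mathbb{R}$; set $K=\overline{B_{k}(0,R+1)}$. By Hardt's semi-algebraic triviality theorem applied to $f|_{K}$, there is $t_{0}>0$ such that $f|_{K}$ is semi-algebraically trivial over $(0,t_{0}]$, the interval $(0,t_{0}]$ contains no critical value of $f$, and $\{\x\in K\mid f(\x)\le t_{0}\}$ semi-algebraically deformation retracts onto $f^{-1}(0)\cap K=V$ (using that a closed semi-algebraic set is a semi-algebraic neighbourhood deformation retract, together with local triviality of $f$ near the value $0$). One then checks that for every $t\in(0,t_{0})$ the connected components of $\{\x\mid f(\x)\le t\}$ that meet $B_{k}(0,R)$ lie entirely inside $K$ and hence are bounded, that their union $W_{t}$ deformation retracts onto $V$, and that every remaining component is disjoint from $V$; moreover the bound $R+1$, as well as the combinatorics of $W_{t}$, are uniform in $t$. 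Applying the transfer principle to a defining formula of uniformly bounded complexity for such a deformation retraction (uniformity guaranteed by effective triviality) yields the statement with $t=\zeta$ over $\R\la\zeta\ra$, where a connected component of $\{\Def(P,\zeta,d)\le 0\}$ is bounded over $\R$ precisely when it admits a bound in $\R$, i.e.\ precisely for the extensions of the components forming $W_{t}$, whose union is $S$. Hence $\Ext(V,\R\la\zeta\ra^{k})$ is a semi-algebraic deformation retract of $S$, and in particular semi-algebraically homotopy equivalent to it.

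For the equivariant refinement, $P$, $p_{d}^{(k)}$, and therefore $f$, are $\mathfrak{S}_{\mathbf{k}}$-invariant, and $V$, $K$, $W_{t}$, $S$ are $\mathfrak{S}_{\mathbf{k}}$-stable. One can then make the deformation retraction $\mathfrak{S}_{\mathbf{k}}$-equivariant, either by invoking an equivariant version of semi-algebraic triviality for the action of the finite group $\mathfrak{S}_{\mathbf{k}}$, or---closer to the methods used elsewhere in this paper---by realizing it as a suitably time-reparametrized and truncated flow along $-\grad f$ (extended by the identity on $V$), which is automatically equivariant because $f$ is invariant. An equivariant semi-algebraic deformation retraction descends through the proper quotient map $\phi_{\mathbf{k}}$ to a semi-algebraic deformation retraction of $\phi_{\mathbf{k}}(S)$ onto $\phi_{\mathbf{k}}(\Ext(V,\R\la\zeta\ra^{k}))$, which proves the last assertion. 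The main obstacle I anticipate is the careful bookkeeping of the phrase ``bounded over $\R$'': one must ensure, uniformly in the parameter $t$, that exactly the components staying in a fixed ball survive the specialization $t=\zeta$ and that the discarded (unbounded) components really are disjoint from $V$. A secondary technical point is producing a deformation retraction that is simultaneously semi-algebraic of uniformly bounded complexity (so that transfer applies) and $\mathfrak{S}_{\mathbf{k}}$-equivariant; the gradient-flow construction is the most robust way to achieve both at once.
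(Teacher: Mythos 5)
This proposition is not proved in the paper at all: it is quoted verbatim from \cite[Proposition 3]{BC2013} and the proof is deferred to that reference, so the comparison is with the argument you would find there, which is essentially the route you outline: view $\{\Def(P,\zeta,d)\le 0\}$ as the $\zeta$-sublevel set of $f=P/(1+p_{d}^{(k)})$, use semi-algebraic (Hardt) triviality of the family $S_t=\{f\le t\}$ over a small parameter interval to retract the bounded part onto $V=f^{-1}(0)$, identify the bounded-over-$\R$ components of the fiber at $t=\zeta$ with this tube, and then treat the quotient. Your bookkeeping of the bounded components is correct and the worry you raise there is easily discharged: any point of the set $\{\Def(P,\zeta,d)\le0\}$ that is bounded over $\R$ satisfies $P\le\zeta(1+p_{d}^{(k)})$ with infinitesimal right-hand side, so its $\lim_{\zeta}$ lies in $V$ (using $P\ge 0$); hence no bounded-over-$\R$ component is disjoint from an infinitesimal neighborhood of $\Ext(V,\R\la\zeta\ra^k)$, and the compactness argument on the annulus pins all such components inside a fixed real ball.

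Two soft spots remain. First, the gradient-flow realization of the retraction does not by itself deliver what the proposition asserts: the flow of $-\grad f$ is not a semi-algebraic map, the points of $V$ are critical for $f$ so reaching $V$ requires a time reparametrization justified by a {\L}ojasiewicz-type inequality, and even then you obtain a topological, not a semi-algebraic, deformation retraction, which must be repaired before any transfer. Second, ``there exists a semi-algebraic deformation retraction'' is not a first-order statement; the Tarski--Seidenberg step has to be applied to a fixed, formula-defined trivialization/retraction over the parameter interval rather than to the bare existence statement -- you gesture at this (``uniformly bounded complexity''), but it is exactly the step that needs to be made precise. For the final assertion about $\phi_{\mathbf{k}}$, the cleanest route, and the one closest in spirit to \cite{BC2013}, avoids equivariant retractions altogether: since $\phi_{\mathbf{k}}$ is a proper semi-algebraic map and all sets involved are symmetric, the images $\phi_{\mathbf{k}}(S_t)$ form again a one-parameter semi-algebraic family, and applying the same triviality argument directly to this quotient family yields the homotopy equivalence of $\phi_{\mathbf{k}}(\Ext(V,\R\la\zeta\ra^k))$ with $\phi_{\mathbf{k}}(S)$; alternatively an equivariant triviality theorem for finite group actions works, but simply asserting that the retraction ``can be made equivariant'' is where the real content lies.
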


\begin{proposition}\cite[Proposition 4]{BC2013}
  \label{prop:non-degenerate}Let $P \in \R [ X_{1}, \ldots,X_{k} ]  $, and
  $d$ be an even number with $\deg (P) <  d=p+1$, with $p$ a prime. Let
  $F=p_{1}^{(k)}(X_{1}, \ldots,X_{k})$.
  Let 
  \[
  V_{\zeta} = \ZZ \left(\Def (P,\zeta,d), \R \langle \zeta \rangle^{k} \right).
  \] 
  Suppose also that $\gcd (p,k) =1$. Then, the critical points of $F$ restricted to $V_{\zeta}$ are
  finite in number, and each critical point is non-degenerate.
\end{proposition}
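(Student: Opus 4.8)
The plan is to pass to the algebraic closure $\C$ of $\R\la\zeta\ra$ and work with the Lagrange/critical-point equations, exploiting the fact that the Newton power sums $p_1^{(k)},\dots,p_d^{(k)}$ form a regular sequence after a suitable localization. First I would set up the critical-point system explicitly: a point $\x\in V_\zeta$ is a critical point of $F=p_1^{(k)}$ restricted to $V_\zeta$ exactly when $\grad F(\x)$ is proportional to $\grad \Def(P,\zeta,d)(\x)$, i.e.\ when the $2\times k$ Jacobian of $(F,\Def(P,\zeta,d))$ drops rank at $\x$ and $\Def(P,\zeta,d)(\x)=0$. Since $\grad F = (1,\dots,1)$ is constant and $\grad \Def(P,\zeta,d) = \grad P - \zeta\, d\,(X_1^{d-1},\dots,X_k^{d-1})$, the rank condition reads: there is $\lambda$ with $\partial_j P(\x) - \zeta d\, x_j^{d-1} = \lambda$ for every $j=1,\dots,k$. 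Eliminating $\lambda$ gives $k-1$ equations $\partial_1 P - \zeta d X_1^{d-1} = \partial_j P - \zeta d X_j^{d-1}$ together with $\Def(P,\zeta,d)=0$: a system of $k$ equations in $k$ unknowns over $\C$.

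Next I would prove finiteness. The key observation is that, modulo the lower-order term $\grad P$ (which has degree $<d-1$ in each variable), the leading behaviour of the system is governed by $\zeta d(X_j^{d-1}-X_i^{d-1})$ and by $1+p_d^{(k)}$; since $\zeta$ is an infinitesimal and hence a unit in $\R\la\zeta\ra$, the coefficient $\zeta d$ is nonzero, so the system is a bounded-degree perturbation of $\{X_j^{d-1}=X_1^{d-1}\}_{j} \cup \{p_d^{(k)}=-1\}$. One then shows this perturbed system has no solutions at infinity in $\PP^k_\C$: on the hyperplane at infinity the top-degree forms are $X_j^{d-1}-X_1^{d-1}$ and $p_d^{(k)}$, whose only common projective zero would force all $x_j$ equal and simultaneously $k x_1^d=0$, hence $\x=0$, a contradiction in $\PP^{k-1}$. (This is where the hypothesis $\gcd(p,k)=\gcd(d-1,k)=1$ enters — it guarantees that the ``all coordinates equal'' locus meets $p_d^{(k)}=0$ only at the origin even after accounting for $(d-1)$-st roots of unity.) By Bézout, the projective closure having no points at infinity forces finiteness of the affine solution set, hence finitely many critical points.

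Then I would prove non-degeneracy of each critical point, i.e.\ that the Hessian of $F|_{V_\zeta}$ (equivalently the bordered Hessian of the Lagrangian $\mathcal L = F - \lambda\, \Def(P,\zeta,d)$) is nonsingular at each such point. Here the deformation parameter $\zeta$ does the work: the Hessian of $\Def(P,\zeta,d)$ contributes the term $-\zeta d(d-1)\,\mathrm{diag}(X_1^{d-2},\dots,X_k^{d-2})$, and since $\zeta$ is infinitesimal, the whole critical-point/degeneracy condition becomes a semi-algebraic (in fact algebraic) condition on $\zeta$ that, by Tarski--Seidenberg/quantifier elimination, holds either for all sufficiently small $\zeta>0$ or fails identically; checking a generic specialization (or arguing via a Sard-type genericity statement for the linear form $F$ among all $p_1$-type functions, as in the non-equivariant treatment in \cite[Chapter 13]{BPRbook3}) shows it holds. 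Concretely, one argues that the bordered Hessian determinant is a nonzero polynomial in $\zeta$ — its lowest-order term in $\zeta$ is controlled by the regular-sequence property of the power sums restricted to the ``coordinates-equal-in-groups'' strata — so it vanishes for only finitely many $\zeta\in\C$, none of them the infinitesimal $\zeta$ of $\R\la\zeta\ra$.

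The main obstacle I anticipate is the non-degeneracy step rather than finiteness: one must rule out that the bordered Hessian vanishes identically in $\zeta$, and this requires a genuine computation showing that the $\zeta$-leading term of that determinant is a nonzero polynomial in the coordinates on the critical locus. The clean way around the brute-force computation is to invoke a parametrized transversality argument — the family of functions $\{F_c = \sum c_j X_j\}$ restricted to $V_\zeta$ is a Morse family for generic $c$, and symmetry of $\Def(P,\zeta,d)$ together with the coprimality hypothesis forces the symmetric choice $c=(1,\dots,1)$ (equivalently $F=p_1^{(k)}$) to already be generic on the orbit space — but making ``generic'' precise over the non-archimedean field $\R\la\zeta\ra$ is exactly where care is needed, and is presumably why the authors cite \cite{BC2013} for the detailed proof.
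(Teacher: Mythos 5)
The paper itself gives no proof of this proposition (it is quoted from \cite{BC2013}), so I can only judge your argument on its own terms; it has two genuine gaps. For finiteness, the overall shape is right (show the leading forms $X_j^{d-1}-X_1^{d-1}$, $2\leq j\leq k$, and $p_d^{(k)}$ have no common projective zero, hence the affine critical system is finite), but the key step is wrong as written: over $\C$ the equations $x_j^{d-1}=x_1^{d-1}$ do not force the coordinates to be equal, only $x_j=\omega_j x_1$ with $\omega_j^{d-1}=1$, and your parenthetical claim that $\gcd(d-1,k)=1$ alone disposes of this case is false. For example, with $d-1=6$ and $k=5$, the point $(1,-1,1,\omega,\omega^2)$, $\omega$ a primitive cube root of unity, satisfies $x_j^{6}=x_1^{6}$ for all $j$ and $\sum_j x_j^{7}=\sum_j x_j=0$, even though $\gcd(6,5)=1$. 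The hypothesis that $d-1=p$ is \emph{prime} --- which your argument never invokes --- is exactly what is needed: since $d=p+1$, one has $\omega_j^{d}=\omega_j$ for $p$-th roots of unity, so a zero at infinity would produce a vanishing sum of $k$ $p$-th roots of unity, and because the minimal polynomial of a primitive $p$-th root of unity is $1+X+\cdots+X^{p-1}$, any such vanishing sum forces $p\mid k$, contradicting $\gcd(p,k)=1$. Without this step the finiteness claim is unproven.

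The non-degeneracy half is a plan rather than a proof, as you yourself concede. The assertion that the bordered Hessian determinant ``is a nonzero polynomial in $\zeta$'' is not even well-posed as stated: it must be evaluated at critical points whose coordinates are algebraic Puiseux series in $\zeta$, so one has to control their $\zeta$-adic valuations (or work with $\zeta$ as an extra variable along the critical curve) before any ``does not vanish at the infinitesimal'' conclusion can be drawn. Moreover, the proposed shortcut --- parametrized transversality for the family $F_c=\sum_j c_jX_j$ plus symmetry ``forcing'' the particular choice $c=(1,\ldots,1)$ to be generic --- is unsound: transversality arguments give genericity for almost all $c$, never for a prescribed value, and on a symmetric variety $p_1^{(k)}$ is the least generic linear form imaginable; the arithmetic hypotheses ($d-1$ prime and coprime to $k$) are in the statement precisely because this special symmetric choice requires a bespoke argument. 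Finally, ``checking a generic specialization'' of $\zeta$ is circular, since verifying Morse-ness for some actual small $\zeta>0$ is the very problem at hand. So the arithmetic core of the finiteness argument and essentially all of the non-degeneracy argument are missing.
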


\begin{notation}
  For any pair $(\mathbf{k},
  \boldsymbol{\ell}
  $, where $\mathbf{k}= (k_{1},
  \ldots,k_{\omega}) \in \Z_{>0}^{\omega}$, $k= \sum_{i=1}^{\omega}
  k_{i}$, and $\boldsymbol{\ell}= (\ell_{1}, \ldots, \ell_{\omega})$, with $1
  \leq \ell_{i} \leq k_{i}$, we denote by $A_{\mathbf{k}}^{\boldsymbol{\ell}}$ the
  subset of $\R^{k}$ defined by
  \begin{eqnarray*}
    A^{\boldsymbol{\ell}}_{\mathbf{k}} & = & \left\{ x= (x^{(1)}, \ldots x^{(
    \omega)}) \mid  \card \left(
    \bigcup_{j=1}^{k_{i}} \{ x_{j}^{(i)} \} \right) = \ell_{i} \right\}.
  \end{eqnarray*}
\end{notation}

\begin{proposition}\cite[Proposition 5]{BC2013}
  \label{prop:half-degree}Let $\mathbf{k}= (k_{1}, \ldots,k_{\omega})
  \in \Z_{>0}^{\omega}$, with 
 $k= \sum_{i=1}^{\omega} k_{i}$,
    and 
    \[P
  \in \R [ \X^{(1)}, \ldots
 ,\X^{(\omega)} ],
  \] 
  where each
  $\X^{(i)}$ is a block of $k_{i}$ variables, such
  that $P$ is non-negative and symmetric in each block of variable
  $\X^{(i)}$ and $\deg (P) \leq d$. Let $(X_{1}
 , \ldots,X_{k})$ denote the set of variables $(
  \X^{(1)}, \ldots,\X^{(
  \omega)})$ and let $F=p_{1}^{(k)} (X_{1}, \ldots,X_{k})$. Suppose
  that the critical points of $F$ restricted to $V= \ZZ \left(P, \R^{k}
  \right)$ are isolated. Then, each critical point of $F$ restricted to $V$ is
  contained in $A^{\boldsymbol{\ell}}_{\mathbf{k}}$ for some $\boldsymbol{\ell}= (\ell_{1}
 , \ldots, \ell_{\omega})$ with each $\ell_{i} \leq d$.
\end{proposition}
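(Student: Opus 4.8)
The plan is to exploit the fact that the hypotheses force both the variety $V=\ZZ(P,\R^k)$ and the function $F=p_1^{(k)}=\sum_{j=1}^{k}X_j$ to factor through one and the same map. Since $P$ is separately symmetric in each block $\X^{(i)}$ and has total degree at most $d$, the graded isomorphism between $\R[\X^{(1)},\dots,\X^{(\omega)}]^{\mathfrak S_{\mathbf k}}$ and the polynomial ring on the Newton power sums $p_j^{(i)}$ (with $\deg p_j^{(i)}=j$) shows that $P$ depends only on those $p_j^{(i)}$ with $j\le d$; that is, $P=h\circ\mathbf p$ where $\mathbf p=\big((p_j^{(i)})_{1\le j\le d}\big)_{1\le i\le\omega}\colon\R^k\to\R^{d\omega}$ and $h$ is a polynomial, while $F=\bar F\circ\mathbf p$ for a fixed linear form $\bar F$. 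I would then argue by contradiction: suppose $x$ is a critical point of $F|_V$ whose block $i_0$ has $\ell_{i_0}>d$ distinct coordinates; I will exhibit a positive-dimensional set of critical points of $F|_V$ accumulating at $x$, contradicting the assumption that these critical points are isolated.

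To produce this family I would pass to the fixed-point subspace $L=(\R^k)^{\mathfrak S_x}$ of the stabiliser $\mathfrak S_x\le\mathfrak S_{\mathbf k}$ of $x$, i.e.\ the linear space of points that are constant on each group of equal coordinates of $x$, so $\dim L=\sum_i\ell_i$ and $x\in L$. Because $\R^k=L\oplus L^{\perp}$ is the (trivial versus nontrivial) isotypic decomposition for $\mathfrak S_x$, and $F,V$ are $\mathfrak S_x$-invariant with $x$ a fixed point, a symmetric-criticality argument shows that for $y\in V\cap L$ near $x$ with $\mathfrak S_y=\mathfrak S_x$ one has $y\in\mathrm{Crit}(F|_V)$ if and only if $y\in\mathrm{Crit}(F|_{V\cap L})$; in particular $x\in\mathrm{Crit}(F|_{V\cap L})$. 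Parametrising $L$ by the distinct values $\mu_t^{(i)}$ with multiplicities $m_t^{(i)}$, one has $F|_L=\bar F\circ\Psi$ and $V\cap L=\Psi^{-1}(\{h=0\})$ for $\Psi\colon L\to\R^{d\omega}$, $\mu\mapsto\big(\sum_t m_t^{(i)}(\mu_t^{(i)})^{j}\big)_{j\le d,\,i}$. The Jacobian of $\Psi$ at $x$ is block diagonal, its $i$-th block a scaled Vandermonde matrix in the distinct nodes $\mu_t^{(i)}$, of rank $\min(d,\ell_i)$; since $\ell_i(y)\le\ell_i$ for $y\in L$ near $x$, the rank of $d\Psi$ is locally constant, equal to $\sum_i\min(d,\ell_i)<\dim L$. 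By the constant rank theorem $\Psi$ is locally a submersion onto a submanifold $N\subset\R^{d\omega}$, so by the resulting local product structure the critical points of $F|_{V\cap L}$ near $x$ are precisely $\Psi^{-1}$ of the critical points of $\bar F$ on $\{h=0\}\cap N$. As $x$ is one such, the whole fibre $\Psi^{-1}(\mathbf p(x))$ near $x$ lies in $\mathrm{Crit}(F|_{V\cap L})$, and it has dimension $\sum_{i:\ell_i>d}(\ell_i-d)\ge\ell_{i_0}-d\ge1$. Along that fibre the multiplicity pattern is unchanged --- on blocks with $\ell_i\le d$ the first $\ell_i$ of the $P_j^{(i)}$ form a local diffeomorphism in $\mu^{(i)}$, and on the remaining blocks distinctness of the $\mu_t^{(i)}$ is an open condition --- so $\mathfrak S_y=\mathfrak S_x$ there and these are genuine critical points of $F|_V$, the desired contradiction. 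Hence every $\ell_i\le d$.

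The step I expect to be the main obstacle is making the ``symmetric criticality'' equivalence between $\mathrm{Crit}(F|_V)$ and $\mathrm{Crit}(F|_{V\cap L})$ completely rigorous when $V$ is not smooth --- the situation contemplated here, since $P\ge0$ makes $\grad P$ vanish identically on $V$. This amounts to fixing a robust notion of critical point (via tangent cones, or the stratified/semi-algebraic Morse-theoretic notion used elsewhere in the paper) and checking that it is compatible both with the $\mathfrak S_x$-invariant splitting $\R^k=L\oplus L^{\perp}$ and with the local product structure coming from the constant rank theorem. Once the smooth case is in hand --- which is exactly what is needed after the equivariant deformation of Section~\ref{sec:deformation}, where the relevant variety has only finitely many, non-degenerate critical points --- the passage from $\R=\mathbb R$ to an arbitrary real closed field is by the Tarski--Seidenberg transfer principle, and what remains is the Vandermonde-type computation above, which is the content of the half-degree principle of Timofte and Riener.
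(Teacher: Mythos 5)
Your core mechanism is the right one: $P$, being of degree at most $d$ and symmetric in each block, factors through the block power sums $p_1^{(i)},\ldots,p_d^{(i)}$, and the Vandermonde rank count shows that the fibre of this power-sum map through a point having more than $d$ distinct coordinates in some block is positive dimensional; exhibiting such a fibre inside the critical locus contradicts isolatedness. This is the mechanism behind the proof the paper defers to \cite{BC2013} (the paper itself reproduces no proof). The genuine gap is the one you flag and do not close: your whole reduction --- the Palais-type equivalence between $\mathrm{Crit}(F|_V)$ and $\mathrm{Crit}(F|_{V\cap L})$, and the transport of criticality along fibres via the constant-rank local product structure --- lives in the smooth category, while the hypotheses of the proposition destroy smoothness: $P\geq 0$ forces $\grad P$ to vanish identically on $V=\ZZ(P,\R^k)$, so $V$ is nowhere a regular level set of $P$ and need not be a smooth manifold at all. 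Consequently ``critical point of $F|_V$'' in your manifold sense is not even defined at the points you argue about, and both directions of your symmetric-criticality equivalence, as well as the identification of $\mathrm{Crit}(F|_{V\cap L})$ with $\Psi^{-1}$ of critical points downstairs, are unsupported. Your proposed remedy (``the smooth case is exactly what is needed after the equivariant deformation'') conflates the proposition with its later application: Proposition \ref{prop:half-degree} is a statement about the given, possibly singular $V$, and it is invoked as such (e.g.\ inside the proof of Corollary \ref{cor:half-degree}, together with Proposition \ref{prop:non-degenerate}); its proof cannot presuppose the smoothness that the deformation supplies elsewhere.

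The way to close the gap is to commit to the algebraic (Lagrangian) characterization of critical points that the paper actually computes with in Algorithm \ref{alg:ep-bounded}: $P(\x)=0$ together with $\partial P/\partial X_1(\x)=\cdots=\partial P/\partial X_k(\x)$, which is meaningful with no smoothness assumption and agrees with the usual notion when $V$ is a regular hypersurface. With that definition your power-sum factorization does all the work directly and the heavy machinery disappears: writing $P=h\circ\mathbf{p}$ one gets $\partial P/\partial X^{(i)}_j=u_i\bigl(X^{(i)}_j\bigr)$, where $u_i(t)=\sum_{m=1}^{d} m\,\tfrac{\partial h}{\partial p_m^{(i)}}(\mathbf{p}(\x))\,t^{m-1}$ has degree at most $d-1$ and coefficients depending on the point only through $\mathbf{p}(\x)$. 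If some block had at least $d+1$ distinct coordinates, the criticality equations would force $u_i$ to be identically equal to the common value of the partials, and then every point of the fibre of $\mathbf{p}$ through $\x$ satisfies the same system; restricting to your subspace $L$, the Vandermonde rank computation plus the (semi-algebraic) constant rank theorem shows this fibre is, near $\x$, a manifold of dimension at least $\ell_{i_0}-d\geq 1$ --- a step that is genuinely needed, since real fibres, unlike complex ones, carry no a priori lower bound on dimension --- contradicting isolatedness. So keep your Vandermonde/constant-rank ingredient, drop the symmetric-criticality and local-product transport, and verify fibre membership in the critical locus by direct substitution into the algebraic system; as written, your proof is incomplete precisely at the step you yourself identify as the main obstacle.
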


With the same notation as in Proposition \ref{prop:half-degree}:
\begin{corollary}
\label{cor:half-degree}
Let $\mathcal{P} \subset \R[\X^{(1)},\ldots,\X^{(k_\omega)}]$ be a finite set of polynomials,
such that for each $P \in \mathcal{P}$,
$P$ is non-negative and symmetric in each block of variable
  $\X^{(i)}$, and $\deg (P) \leq d$.
 Let $C$ be a bounded semi-algebraically connected component of $\ZZ(\mathcal{P}, \R^k)$. Then, $C \cap A^{\boldsymbol{\ell}}_{\mathbf{k}} \neq \emptyset$, 
for some $\boldsymbol{\ell}= (\ell_{1}
 , \ldots, \ell_{\omega})$,  where for each $i, 1 \leq i \leq \omega$,  $1 \leq \ell_{i} \leq 2d$.
 \end{corollary}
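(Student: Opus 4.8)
The plan is to reduce Corollary \ref{cor:half-degree} to the isolated-critical-point situation of Proposition \ref{prop:half-degree} by a perturbation and critical-point argument. First I would replace the family $\mathcal{P}$ by a single polynomial: since each $P \in \mathcal{P}$ is non-negative, the polynomial $Q := \sum_{P \in \mathcal{P}} P^2$ is non-negative, symmetric in each block $\X^{(i)}$, has degree at most $2d$, and satisfies $\ZZ(Q,\R^k) = \ZZ(\mathcal{P},\R^k)$. So without loss of generality we are looking at a bounded semi-algebraically connected component $C$ of $\ZZ(Q,\R^k)$ with $Q$ non-negative, symmetric in each block, and $\deg(Q) \le 2d$.

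Next I would want to locate a point of $C$ that is a critical point of $F = p_1^{(k)}$ restricted to $\ZZ(Q,\R^k)$, so that Proposition \ref{prop:half-degree} (applied with $2d$ in place of $d$) pins it down in some $A^{\boldsymbol{\ell}}_{\mathbf{k}}$ with each $\ell_i \le 2d$. The natural candidate is a point of $C$ where $F$ attains its extremum on $C$; since $C$ is closed and bounded (over $\R$; over a general real closed field one invokes the boundedness hypothesis and works with a point where the linear form $F$ is maximized on the semi-algebraic set $C$, which exists by semi-algebraic compactness of closed bounded semi-algebraic sets), such a point $x^*$ exists and is a critical point of $F|_{\ZZ(Q,\R^k)}$ in the appropriate (possibly non-smooth) sense. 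The difficulty is that Proposition \ref{prop:half-degree} requires the critical points of $F|_V$ to be \emph{isolated}, which need not hold for $Q$ directly — the variety may be positive-dimensional or have a non-isolated critical locus.

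To get around this I would perturb: introduce an infinitesimal $\zeta$ and pass to $\R\la\zeta\ra$, replacing $Q$ by a deformation in the spirit of Notation \ref{not:def} and Proposition \ref{prop:non-degenerate} — more precisely, choose an even integer $d' = p+1$ with $p$ prime and $2d < d'$ and $\gcd(p,k)=1$, and consider $\Def(Q,\zeta,d')$ (or directly work with the variety $\ZZ(\Def(Q,\zeta,d'),\R\la\zeta\ra^k)$), whose critical points of $F$ are finite and non-degenerate by Proposition \ref{prop:non-degenerate}. By Proposition \ref{prop:alg-to-semialg} the bounded part of the deformed set retracts onto $\Ext(\ZZ(Q,\R^k),\R\la\zeta\ra^k)$; picking the semi-algebraically connected component of the bounded locus of $\{\Def(Q,\zeta,d') \le 0\}$ whose limit under $\lim_\zeta$ meets $C$, and taking in it a point $y^*$ maximizing $F$, we get an isolated critical point of $F$ on $\ZZ(\Def(Q,\zeta,d'),\R\la\zeta\ra^k)$ — actually here one should apply Proposition \ref{prop:half-degree} on the boundary variety, so some care is needed to phrase the critical point correctly on the zero set rather than the sublevel set; alternatively one works directly with a slightly different perturbation making $\ZZ$ a smooth bounded hypersurface. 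Proposition \ref{prop:half-degree} then places $y^*$ in $A^{\boldsymbol{\ell}}_{\mathbf{k}}$ with each $\ell_i \le 2d$ (the degree bound being $2d$ because $\deg Q \le 2d$), and taking $\lim_\zeta y^*$ gives a point of $C$; since $\lim_\zeta$ can only collapse coordinates together, the number of distinct coordinates in each block does not increase, so $\lim_\zeta y^* \in A^{\boldsymbol{\ell}'}_{\mathbf{k}}$ with each $\ell_i' \le \ell_i \le 2d$, as desired.

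The main obstacle I expect is the interface between the sublevel-set deformation of Proposition \ref{prop:alg-to-semialg} and the zero-set critical-point statement of Proposition \ref{prop:half-degree}: Proposition \ref{prop:non-degenerate} is stated for $V_\zeta = \ZZ(\Def(P,\zeta,d),\cdot)$, so I need the extremum of $F$ on a bounded connected component to actually lie on this zero set (the boundary of the sublevel set) rather than in the interior. This should follow because $F$ is linear and non-constant, so it has no interior critical points on a full-dimensional set, forcing its maximum over a bounded component of $\{\Def(Q,\zeta,d') \le 0\}$ onto the boundary $\ZZ(\Def(Q,\zeta,d'),\cdot)$, where it is then one of the finitely many non-degenerate critical points covered by Propositions \ref{prop:non-degenerate} and \ref{prop:half-degree}. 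Making this transfer clean — and handling the passage from $\R$ to an arbitrary real closed field via Tarski–Seidenberg as flagged in \S\ref{sec:prelim} — is the only genuinely delicate part; everything else is bookkeeping with $\lim_\zeta$ and the observation that specialization does not increase the number of distinct coordinates in a block.
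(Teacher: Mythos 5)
Your overall strategy (deform, locate extrema of $p_1^{(k)}$ as isolated critical points via Proposition \ref{prop:non-degenerate}, apply Proposition \ref{prop:half-degree}, then take $\lim_\zeta$ and observe that specialization cannot increase the number of distinct coordinates in a block) is the paper's, but your degree bookkeeping breaks the stated bound. The bound in Proposition \ref{prop:half-degree} is governed by the degree of the polynomial defining the variety on which the critical points live, and in your construction that variety is $\ZZ(\Def(Q,\zeta,d'),\R\la\zeta\ra^k)$, defined by a polynomial of degree $d'$ --- not by $Q$. (Concretely, at a critical point the Lagrange conditions read $\partial Q/\partial X_i - \zeta d' X_i^{d'-1} = \lambda$, so within each block the coordinates are roots of a univariate polynomial of degree $d'-1$; the count of distinct coordinates is therefore controlled by $d'$, not by $\deg Q$.) Since you replace $\mathcal{P}$ by the sum of squares $\sum_{P} P^2$, you get $\deg Q \le 2d$, are forced to take $d' > 2d$, and hence obtain only $\ell_i \le d' \le 4d$ (via Bertrand); your parenthetical ``the degree bound being $2d$ because $\deg Q \le 2d$'' is exactly where the argument fails. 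The non-negativity hypothesis on the members of $\mathcal{P}$ is in the statement precisely so that one can take $Q = \sum_{P\in\mathcal{P}} P$ with no squaring: then $\ZZ(Q,\R^k) = \ZZ(\mathcal{P},\R^k)$, $\deg Q \le d$, the least even $d' > d$ with $d'-1$ prime satisfies $d' \le 2d$ by Bertrand's postulate, and Proposition \ref{prop:half-degree} applied to the degree-$d'$ deformation yields $\ell_i \le 2d$ as required. This is how the paper proceeds.

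Two further points. First, you cannot simply ``choose'' $d' = p+1$ with $p$ prime and $\gcd(p,k)=1$: Bertrand guarantees only one prime in the relevant range, and it may divide $k$; the paper handles this by replacing each $P$ by $P + X_{k+1}^2$ and incrementing $k$ (adding a block of size one), so that the new $k'$ is coprime to $p$, and you need some such device too. Second, your detour through the sublevel set $\{\Def(Q,\zeta,d')\le 0\}$ of Proposition \ref{prop:alg-to-semialg}, and the ensuing worry about whether the maximizer of $F$ lies on the boundary, is avoidable: the paper works directly with the hypersurface $V_\zeta=\ZZ(\Def(Q,\zeta,d'),\R\la\zeta\ra^{k})$ and invokes \cite[Proposition 12.51]{BPRbook3}, which gives, for every bounded semi-algebraically connected component $C$ of $\ZZ(Q,\R^{k})$, a component $C_\zeta$ of $V_\zeta$ bounded over $\R$ with $\lim_\zeta C_\zeta \subset C$; since $C_\zeta$ is a bounded component of the zero set itself, $p_1^{(k)}$ attains its extrema on it, producing the needed critical points on $V_\zeta$ with no boundary-versus-interior discussion. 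Your final step (that $\lim_\zeta$ lands in $C$ and does not increase the number of distinct coordinates per block) is correct and is the same as the paper's implicit conclusion.
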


\begin{proof}
Let $d'$ be
the least even number such that $d' >d$ and such that $d' -1$ is
prime. By Bertrand's postulate we have that $d'   \leq 2d$. Now, if $p$
divides $k$, replace each $P \in \mathcal{P}$ by the polynomial
\[ P  \noplus \noplus +X_{k+1}^{2},  \]
and let $\omega' = \omega +1$, $k' =k+1$, and $\mathbf{k}' = (\mathbf{k},1
)$. Otherwise, let $\omega' = \omega +1$, $k' =k$, and $\mathbf{k}' = (
\mathbf{k},0)$. In either case, we have that $\gcd (p,k') =1$, and $k'
\leq k+1$.

Let $Q = \sum_{P \in \mathcal{P}} P$, and let $V_\zeta = \ZZ(\Def(Q,\zeta,d'),\R\la\zeta\ra^{k'})$.
Then for every bounded semi-algebraically connected component $C$ of $\ZZ(Q,\R^{k'})$,
there exists a semi-algebraically connected component of $C_\zeta$ of $V_\zeta$ bounded over $\R$,
such that $\lim_\zeta C_\zeta \subset C$ (see \cite[Proposition 12.51]{BPRbook3}).  
Now  every bounded semi-algebraically connected component $C_\zeta$ of $V_\zeta$ contains at least two critical points
of the polynomial $e^{(k)}_1$ restricted to $V_\zeta$, and they are isolated by 
Proposition \ref{prop:non-degenerate}.
The corollary now follows from Proposition
\ref{prop:half-degree}.
\end{proof}

The next theorem which gives an exact expression for both $\chiep(S,\F)$ as well as
$\chiep(S/\mathfrak{S}_\kk,F)$ (where $S$ is as in Proposition \ref{prop:alg-to-semialg})
is the key result needed for the algorithms in the paper. 
We defer its proof to the appendix.

Before stating the theorem  we need to introduce a few more  notation.

\begin{notation}
  \label{not:partition}(Partitions) We denote by $\Pi_{k}$ the set of
  partitions of $k$, where each partition $\pi = (\pi_{1}, \pi_{2}, \ldots
 , \pi_{\ell}) \in \Pi_{k}$, where $\pi_{1} \geq \pi_{2} \geq \cdots \geq
  \pi_{\ell} \geq 1$, and $\pi_{1} + \pi_{2} + \cdots + \pi_{\ell} =k$. We
  call $\ell$ the length of the partition $\pi$, and denote
  $\length (\pi) = \ell$. 
  
  More generally, for any tuple $\mathbf{k}= (k_{1}, \ldots,k_{\omega})
  \in \Z_{>0}^{\omega}$, we will denote by
  $\boldPi_{\mathbf{k}} = \Pi_{k_{1}} \times \cdots
  \times \Pi_{k_{\omega}}$, and for each $\boldpi= (
  \pi^{(1)}, \ldots, \pi^{(\omega)}) \in
  \boldPi_{\mathbf{k}}$, we denote by
  $\length (\boldpi) =
  \sum_{i=1}^{\omega} \length (\pi^{(i)})$. We
  also denote for each $\boldsymbol{\ell}= (\ell_{1}, \ldots, \ell_{\omega}) \in
  \Z_{>0}^{\omega}$,
  \begin{eqnarray*}
    | \boldsymbol{\ell} | & = & \ell_{1} + \cdots + \ell_{\omega}.
    \end{eqnarray*}
\end{notation}

\begin{notation}
\label{noy:L-pi}
  Let $\boldpi \in
  \boldPi_{\mathbf{k}}$ where $\mathbf{k}= (k_{1},
  \ldots,k_{\omega}) \in \Z_{>0}^{\omega}$, with $k=
  \sum_{i=1}^{\omega} k_{i}$. 
  
  For $1 \leq i  \leq \omega$, and $1 \leq j \leq
  \length (\pi^{(i)})$, let $L_{\pi^{(i
 )}_{j}} \subset \R^{k}$ be defined by the equations
  \begin{eqnarray*}
    X^{(i)}_{\pi_{1}^{(i)} + \cdots + \pi_{j-1}^{(i)} +1} & = \cdots = &
    X^{(i)}_{\pi_{1}^{(i)} + \cdots + \pi_{j}^{(i)}},
  \end{eqnarray*}
and let
  \begin{eqnarray*}
    L_{\boldpi} & = & \bigcap_{1 \leq i \leq \omega}
    \bigcap_{1 \leq j \leq \length (\pi^{(i)})}
    L_{\pi^{(i)}_{j}}. 
  \end{eqnarray*}
  \end{notation}
\begin{notation}

\label{not:L-fixed}
Let $L \subset \R^{k}$ be the subspace defined by
  $\sum_{i} X_{i} =0$, and $\boldpi= (\pi^{(1)},
  \ldots, \pi^{(\omega)}) \in \boldPi_{\mathbf{k}}$.
  Let for each $i$, $1 \leq i \leq \omega$, $\pi^{(i)} = (\pi^{(i)}_{1},
  \ldots, \pi^{(i)}_{\ell_{i}})$, and for each $j,1 \leq j \leq \ell_{i}
 ,$ let $L^{(i)}_{j}$ denote the subspace $L \cap L_{\pi^{(i)}_{j}}$ of
  $L$, and $M^{(i)}_{j}$ the orthogonal complement of $L^{(i)}_{j}$ in
  $L$. We denote 
  \[
  L_{\fixed} =L \cap  L_{\boldpi}.
  \]
\end{notation}

We  have the following theorem which gives  an exact expression for the
Euler-Poincar{\'e} characteristic of a symmetric semi-algebraic set defined by
one polynomial inequality satisfying the same conditions as in Lemmas
\ref{lem:equivariant_morseA} and \ref{lem:equivariant_morseB} above. The proof of the theorem which depends on the properties of $\Def(P,\zeta,d)$ stated above is given
in \S \ref{sec:appendix}.

\begin{theorem}
  \label{thm:equivariant-ep}Let $\mathbf{k}= (k_{1}, \ldots,k_{\omega})
  \in \Z_{>0}^{\omega}$, with $k= \sum_{i=1}^{\omega} k_{i}$, and let
  $S \subset \R^{k}$ be a bounded symmetric basic semi-algebraic set defined
  by $P \leq 0$, where 
  $P \in \R [ \X^{(1)},\ldots,\X^{(\omega)} ]^{\mathfrak{S}_\kk}$.
  where each  $\X^{(i)}$ is a block of $k_{i}$ variables.

  Let $W= \ZZ (P, \R^{k})$ be non-singular and bounded. 
  Let $(X_{1}, \ldots,X_{k})$ denote the variables $(\X^{(1)}, \ldots
 ,\X^{(\omega)})$ and suppose that $F=p_{1}^{(k)}(X_{1}, \ldots,X_{k})$ restricted to $W$ has a finite number of critical
  points, all of which are non-degenerate. Let $C$ be the finite set of
  critical points $\x$ of $F$ restricted to $W$ such that $\sum_{1 \leq i \leq
  k} \dfrac{\partial P}{\partial X_{i}} (\x)  <0$, and let
  $\Hess (\x)$ denote the Hessian of $F$
  restricted to W at $\x$. Then, for any field of coefficients $\F$,
  \begin{eqnarray}
   \label{eqn:equivariant-ep1}
   &&\\
   \nonumber
    \chi^{\mathrm{top}}(S,\F) & = & \sum_{\boldpi= (\pi^{(
    1)}, \ldots, \pi^{(\omega)}) \in
    \boldPi_{\mathbf{k}}} \sum_{\x  \in C \cap
    L_{\boldpi}} (-1
   )^{\ind^{-} (\Hess
    (\x))} \binom{k_{1}}{\pi^{(1)}} \cdots \binom{k_{\omega}}{\pi^{(
    \omega)}}, \\
    \label{eqn:equivariant-ep2}
    && \\
\nonumber
    \chi_{\mathfrak{S}_{k}} (S,\F) & = &
    \sum_{\boldpi \in
    \boldPi_{\mathbf{k}}} \sum_{\x  \in C \cap
    L_{\boldpi},L^{-} (\x) \subset
    L_{\fixed}} (-1
   )^{\ind^{-} (\Hess
    (\x))},  
  \end{eqnarray}
  (where for $\pi = (\pi_{1}, \ldots, \pi_{\ell}) \in \Pi_{k}$,
  $\binom{k}{\pi}$ denotes the multinomial coefficient $\binom{k}{\pi_{1},
  \ldots, \pi_{\ell}}$).
\end{theorem}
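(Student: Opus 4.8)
## Proof proposal for Theorem \ref{thm:equivariant-ep}

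The plan is to follow the classical Morse-theoretic recipe from \cite[Chapter 13]{BPRbook3} for computing Euler-Poincar\'e characteristics via critical points, but carried out \emph{equivariantly} so that the half-degree principle (Proposition \ref{prop:half-degree}) can be invoked to keep everything organized by the linear subspaces $L_{\boldpi}$. First I would reduce to the case $\R = \mathbb{R}$ using the Tarski-Seidenberg transfer principle, since the argument will pass through integration of gradient flows. Next, since $W = \ZZ(P,\R^k)$ is non-singular, bounded, and $F = p_1^{(k)}$ restricted to $W$ is Morse, I would express $S = \{P \le 0\}$ as the sublevel region bounded by $W$ and run standard Morse theory on the manifold-with-boundary picture: the homotopy type of $S$ changes only as one passes a critical value of $F|_W$, and at a critical point $\x$ the change in $\chi^{\mathrm{top}}$ is $(-1)^{\ind^-(\Hess(\x))}$ precisely when the critical point is ``of attaching type'' for the sublevel set $\{P \le 0\}$ — which is exactly the sign condition $\sum_i \partial P/\partial X_i(\x) < 0$ that defines the set $C$. (This is the role of the equivariant Morse Lemmas \ref{lem:equivariant_morseA} and \ref{lem:equivariant_morseB}, which I would cite directly.) This yields $\chi^{\mathrm{top}}(S,\F) = \sum_{\x \in C} (-1)^{\ind^-(\Hess(\x))}$ as a first, non-equivariant identity.

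The second step is to reorganize this sum equivariantly. By Proposition \ref{prop:half-degree}, every critical point $\x \in C$ lies in some stratum $A^{\boldsymbol{\ell}}_{\mathbf{k}}$ with all $\ell_i \le d$; refining the coordinate-coincidence pattern of $\x$ into an actual partition, $\x$ lies on the linear subspace $L_{\boldpi}$ for a unique $\boldpi \in \boldPi_{\mathbf{k}}$ (up to the choice of which coordinates are grouped — i.e., up to the $\mathfrak{S}_{\mathbf{k}}$-action). Because $P$ and $F$ are both $\mathfrak{S}_{\mathbf{k}}$-symmetric, the set $C$ is $\mathfrak{S}_{\mathbf{k}}$-stable, the index $\ind^-(\Hess(\cdot))$ and the sign of $\sum_i \partial P/\partial X_i$ are constant along each orbit, and the orbit of a critical point $\x \in C \cap L_{\boldpi}$ (with $\x$ having \emph{exactly} the coincidence pattern $\boldpi$, not a coarser one) has cardinality $\binom{k_1}{\pi^{(1)}} \cdots \binom{k_\omega}{\pi^{(\omega)}}$. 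Summing $(-1)^{\ind^-(\Hess(\x))}$ over each orbit and then over all $\boldpi$ and over the orbit representatives in $C \cap L_{\boldpi}$ gives \eqref{eqn:equivariant-ep1}; one has to be slightly careful that the inner sum is over \emph{all} of $C \cap L_{\boldpi}$ (allowing coarser patterns as well), in which case the multinomial coefficients correctly count with the right multiplicity, or restricted to the generic-pattern points — I would set up the indexing so that the stated formula, summing over all $\x \in C \cap L_{\boldpi}$, is literally correct, which forces the convention that $\binom{k_i}{\pi^{(i)}}$ is the number of subsets realizing pattern $\pi^{(i)}$.

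For the equivariant characteristic \eqref{eqn:equivariant-ep2} I would pass to the quotient $S/\mathfrak{S}_{\mathbf{k}}$ and compute $\chi_{\mathfrak{S}_{\mathbf{k}}}(S,\F) = \chi^{\mathrm{top}}(S/\mathfrak{S}_{\mathbf{k}},\F)$ via the image of the Morse function $F$ on the quotient. The critical points of $\bar F$ on $W/\mathfrak{S}_{\mathbf{k}}$ are the images of the critical points of $F|_W$, but the local contribution changes: at $\x \in C \cap L_{\boldpi}$ the descending space $L^-(\x)$ (the negative eigenspace of $\Hess(\x)$) may or may not be tangent to the fixed-point stratum $L_{\fixed}$ determined by $\boldpi$. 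When $L^-(\x) \subset L_{\fixed}$, the descending cell survives in the quotient and contributes $(-1)^{\ind^-(\Hess(\x))}$; when it does not, the stabilizer of $\x$ acts on the descending cell with no fixed directions beyond $L_{\fixed}$, and the contribution to the quotient's Euler characteristic vanishes. This ``fixed-directions'' criterion is exactly the content of the equivariant Morse Lemma (Lemma \ref{lem:equivariant_morseB}), and selecting only those $\x$ with $L^-(\x) \subset L_{\fixed}$ yields \eqref{eqn:equivariant-ep2}.

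\medskip

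\textbf{Main obstacle.} The genuinely delicate part is the equivariant local analysis at the critical points — precisely, establishing \emph{why} a Morse critical point $\x$ contributes $(-1)^{\ind^-(\Hess(\x))}$ to $\chi^{\mathrm{top}}(S/\mathfrak{S}_{\mathbf{k}})$ if and only if $L^-(\x) \subset L_{\fixed}$. One must understand the action of the stabilizer $(\mathfrak{S}_{\mathbf{k}})_{\x}$ (a Young subgroup fixing the partition pattern) on the Hessian's eigenspaces, argue that the symmetry forces $\Hess(\x)$ to respect the decomposition $L = L_{\fixed} \oplus L_{\fixed}^{\perp}$, and then do a careful homotopy/excision computation of how the quotient of a handle attachment behaves under this linear group action. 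Since the excerpt states that Lemmas \ref{lem:equivariant_morseA} and \ref{lem:equivariant_morseB} already encapsulate exactly this local computation (with proofs deferred to \cite{BC2013}), the theorem's proof itself is mostly bookkeeping — matching orbit sizes to multinomial coefficients and checking that the additivity from the Mayer-Vietoris-type arguments assembles the local contributions correctly — but the conceptual weight all sits in that equivariant handle-attachment lemma.
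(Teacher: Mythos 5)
Your proposal follows essentially the same route as the paper: the paper's proof of Theorem \ref{thm:equivariant-ep} consists precisely of slicing by the critical values of $F$ restricted to $W$ and invoking Lemmas \ref{lem:equivariant_morseA} and \ref{lem:equivariant_morseB}, together with the orbit-size/multinomial bookkeeping you describe, and the delicate equivariant handle-attachment analysis you flag is indeed exactly what Lemma \ref{lem:equivariant_morseB} (proved in the appendix via Proposition \ref{prop:orthogonal}) encapsulates. One minor remark: Proposition \ref{prop:half-degree} is neither needed nor strictly applicable here (the theorem does not assume $P$ non-negative); the sum over $\boldPi_{\mathbf{k}}$ merely records the coincidence pattern of each critical orbit, so dropping that invocation changes nothing.
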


\begin{proof} 

See \S \ref{sec:appendix} (Appendix).
\end{proof}

Theorem \ref{thm:equivariant-ep} is illustrated by the following simple
example.

\begin{figure}[h]
 
  \includegraphics[scale=0.60]{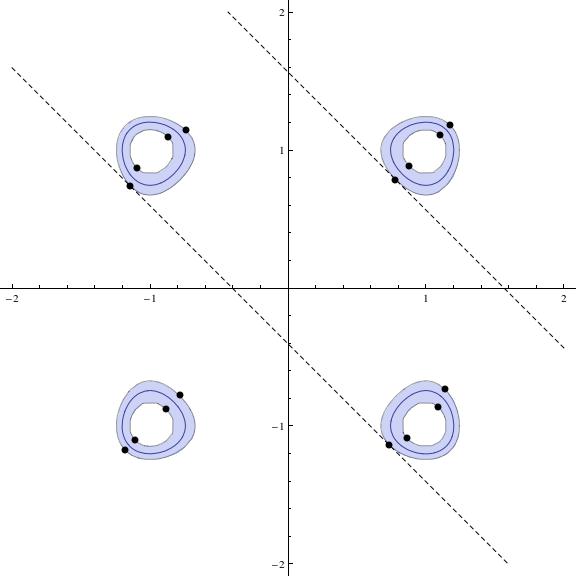}
\caption{\label{fig:figure1}}
\end{figure}
\begin{example}
  In this example, the number of blocks $\omega =1$, and $k=k_{1} =2$. \
  Consider the polynomial
  \begin{eqnarray*}
    P & = & (X_{1}^{2} -1)^{2} + (X_{2}^{2} -1)^{2} - \eps,
  \end{eqnarray*}
  for some small $\eps >0$. The sets $\ZZ \left(P, \R^{2} \right)$, and $S=
  \left\{ x \in \R \langle \zeta \rangle^{2} \mid \bar{P} \leq 0 \right\}$,
  where $\bar{P} =  \Def (P, \zeta,6)$ is shown in the Figure
  \ref{fig:figure1}.
  
  The polynomial $p_{1}^{(2)}(X_{1},X_{2}) =X_{1} +X_{2}$ has $16$ critical
  points, corresponding to $12$ critical values, $v_{1} < \cdots <v_{12}$, on
  $\ZZ \left(\bar{P}, \R \langle \zeta \rangle^{2} \right)$ of which $v_{5}$
  and $v_{9}$ are indicated in Figure \ref{fig:figure1} using dotted lines.
  The corresponding indices of the critical points, the number of critical
  points for each critical value, the sign of the polynomial $\dfrac{\partial
  \bar{P}}{\partial X_{1}} + \dfrac{\partial \bar{P}}{\partial X_{2}}$ at
  these critical points, and the partition $\pi \in \Pi_{2}$ such that the
  corresponding critical points belong to $L_{\pi}$ are shown in Table
  \ref{tab:table1}. The critical points corresponding to the shaded rows are
  then the critical points where $\left(\dfrac{\partial \bar{P}}{\partial
  X_{1}} + \dfrac{\partial \bar{P}}{\partial X_{2}} \right) <0$, and these are
  the critical points which contribute to the sums in Eqns.
  \eqref{eqn:equivariant-ep1} and \eqref{eqn:equivariant-ep2}.
  
  {\tiny
  \begin{table}[h]
    \begin{tabular}{|l|l|l|l|l|l|l|}
      \hline
      Critical values & Index  & $ \SIGN \left(
      \dfrac{\partial \bar{P}}{\partial X_{1}} + \dfrac{\partial
      \bar{P}}{\partial X_{2}} \right)$ & $\pi$ & $L^{-} (p)$ &
      $L_{\fixed}$ & $L^{-} (p) \subset
      L_{\fixed}$\\
      \hline
      \rowcolor{LightCyan}$v_{1}$ & 0 & $-1$ & $(2)$ & $0$ & $0$ & yes\\
      \hline
      $v_{2}$ & 0 & $1$ & $(2)$ & 0 & $0$ & yes\\
      \hline\rowcolor{LightCyan}
      $v_{3}$ & 1 & $-1$ & $(2)$ & $L$ & $0$ & no\\
      \hline
      $v_{4}$ & 1 & $1$ & $(2)$ & $L$ & $0$ & no\\
      \hline\rowcolor{LightCyan}
      $v_{5}$ & $0$ & $-1$ & $(1,1)$ & $0$ & $L$ & yes\\
      \hline
      $v_{6}$ & 0 & 1 & $(1,1)$ & $0$ & $L$ & yes\\
      \hline\rowcolor{LightCyan}
      $v_{7}$ & $1$ & $-1$ & $(1,1)$ & $L$ & $L$ & yes\\
      \hline
      $v_{8}$ & $1$ & $1$ & $(1,1)$ & $L$ & $L$ & yes\\
      \hline\rowcolor{LightCyan}
      $v_{9}$ & $0$ & $-1$ & $(2)$ & $0$ & $0$ & yes\\
      \hline
      $v_{10}$ & 0 & $1$ & $(2)$ & $0$ & $0$ & yes\\
      \hline\rowcolor{LightCyan}
      $v_{11}$ & $1$ & $-1$ & $(2)$ & $L$ & $0$ & no\\
      \hline
      $v_{12}$ & $1$ & 1 & $(2)$ & $L$ & $0$ & no\\
      \hline
    \end{tabular}
    \caption{\label{tab:table1}}
  \end{table}
  }

  It is now easy to verify using Eqns. \eqref{eqn:equivariant-ep1} and
  \eqref{eqn:equivariant-ep2} that,
  \begin{eqnarray*}
    \chi^{\mathrm{top}} \left(\ZZ \left(P, \R^{k} \right),\mathbb{Q} \right) & = & \chi^{\mathrm{top}}(
    S,\mathbb{Q})\\
    &= & (-1)^{0} \binom{2}{2} + (-1)^{1} \binom{2}{2} + (-1)^{0}
    \binom{2}{1,1} \\
&& \;\; +\;\; (-1)^{1} \binom{2}{1,1} + (-1)^{0} \binom{2}{2} + (
    -1)^{1} \binom{2}{2}\\
    & = & 1-1+2-2+1-1=0.\\
    \chi_{\mathfrak{S}_{2}} \left(\ZZ \left(P, \R^{k} \right),\mathbb{Q}
    \right) & = & \chi_{\mathfrak{S}_{2}} (S,\mathbb{Q})\\
    & = & (-1)^{0} + (-1)^{0} + (-1)^{1} + (-1)^{0}\\
    & = & 2.
  \end{eqnarray*}
\end{example}

\section{Algorithms and the proofs of the main theorems}
\label{sec:algorithms}

In this section we describe new algorithms for computing the (generalized)
Euler-Poincar{\'e} characteristic of symmetric semi-algebraic subsets of
$\R^{k}$, prove their correctness and analyze their complexities. 
As a consequence we prove Theorems \ref{thm:algorithm-algebraic} and 
\ref{thm:algorithm-sa}.

We first  recall some basic algorithms from \cite{BPRbook3} which we will need
as subroutines in our algorithms.

\subsection{Algorithmic Preliminaries}

In this section we recall the input, output and complexities of some basic
algorithms and also some notations from the book \cite{BPRbook3}. These
algorithms will be the building blocks of our main algorithms described later.

\begin{definition}
  \label{2:def:Thom encoding}Let $P \in \R [X]$ and $\sigma \in \{0,1, -
  1\}^{\Der (P)}$, a sign condition on the set $\Der (P)$ of derivatives of
  $P$. The sign condition $\sigma$ is\label{2:def:thom}{\index{Thom encoding}}
  a \emph{Thom encoding} of $x \in \R$ if $\sigma (P) =0$ and $\RR (
  \sigma) = \{x\}$, i.e. $\sigma$ is the sign condition taken by the set
  $\Der (P)$ at $x$. 
\end{definition}

\begin{notation}
  \label{12:def:rurassociated}A \emph{$k$-univariate
  representation} $u$ is a $k+2$-tuple of
  polynomials in $\R[T]$,
  \[ u= (f(T),g(T)), \ensuremath{\operatorname{with}} g= (g_{0} (T),g_{1}
     (T), \ldots,g_{k} (T)), \]
  such that $f$ and $g_{0}$ are co-prime. Note that $g_{0} (t) \neq 0$ if $t
  \in \C$ is a root of $f (T)$. The points \emph{associated}
to a univariate representation $u$
  are the points
  \begin{equation}
    x_{u} (t) = \left(\frac{g_{1} (t)}{g_{0} (t)}, \ldots, \frac{g_{k}
    (t)}{g_{0} (t)} \right) \in \C^{k} \text{\label{12:eq:assoc}}
  \end{equation}
  where $t \in \C$ is a root of $f (T)$.
  
  Let $\mathcal{P} \subset \R [X_{1}, \ldots,X_{k} ]$ be a finite set of
  polynomials such that $\ZZ (\mathcal{P}, \C^{k})$ is finite. The
  $k+2$-tuple $u= (f(T),g(T))$, {\emph{represents}} $\ZZ (\mathcal{P},
  \C^{k})$ 
if $u$ is a univariate representation and
  \[ \ZZ (\mathcal{P}, \C^{k}) \text{=} \left\{ x \in \C^{k} | \exists
     t \in \ZZ (\mathcal{f}, \C) x=x_{u} (t) \right\}. \]
  A {\emph{real k-univariate representation}} is a pair $u, \sigma$ where $u$ is a
  $k$-univariate representation and $\sigma$ is the Thom encoding of a root of
  $f$, $t_{\sigma} \in{\R}$. The point
  \emph{associated} to the real univariate representation $u, \sigma$ is the point
  
 \begin{equation}
    x_{u} (t_{\sigma}) = \left(\frac{g_{1} (t_{\sigma})}{g_{0} (t_{\sigma}
   )}, \ldots, \frac{g_{k} (t_{\sigma})}{g_{0} (t_{\sigma})} \right) \in
    \R^{k}.
  \end{equation}
\end{notation}

For the rest of this section we fix an ordered domain $\D$ contained in the
real closed field $\R$. By complexity of an algorithm whose input consists of
polynomials with coefficients in $\D$, we will mean (following
\cite{BPRbook3}) the maximum number of arithmetic operations in $\D$
(including comparisons) used by the algorithm for an input of a certain size.

We will use four algorithms from the book \cite{BPRbook3}:
namely,  Algorithm 10.98 (Univariate Sign Determination), Algorithm 12.64 (Algebraic Sampling), Algorithm 12.46 (Limit of Bounded Points),
and  Algorithm 10.83 (Adapted Matrix).
We refer the reader to 
\cite{BPRbook3}
for the descriptions of these algorithms and their complexity analysis.

\subsection{Computing the generalized Euler-Poincar{\'e} characteristic of
symmetric real algebraic sets}

We now describe our algorithm for computing the generalized Euler-Poincar{\'e}
characteristic for real varieties, starting as usual with the bounded case.
Note that using  Proposition  \ref{prop:ep-gen-BM}, for a closed and bounded semi-algebraic set $S$,
\begin{eqnarray*}
  \chi^{\gen} (S) & = & \chi^{\mathrm{top}}(S,\mathbb{Q}).
\end{eqnarray*}

\vspace{-0.1in}
{\small
\begin{algorithm}[H]
\caption{(Generalized Euler-Poincar{\'e} characteristic for bounded
symmetric algebraic sets)}
\label{alg:ep-bounded}

\begin{algorithmic}[1]
\INPUT
\Statex{
\begin{enumerate}
 \item A tuple $\mathbf{k}= (k_{1}, \ldots,k_{\omega}) \in
    \Z_{>0}^{\omega}$, with $k= \sum_{i=1}^{\omega} k_{i}$.
\item A polynomial $P \in \D [ \X^{(1)}, \ldots
   ,\X^{(\omega)} ]$, where each $\X^{(i)}$ is a
    block of $k_{i}$ variables, and $P$ is non-negative, symmetric in each block of
    variables $\X^{(i)}$, and such that $\ZZ \left(P, \R^{k}
    \right)$ is bounded and of degree at most $d$.
\end{enumerate}
}
  
\OUTPUT
\Statex{
  $\chi^{\tmop{gen}} \left(\ZZ \left(P, \R^{k} \right) \right)$
  and 
  $\chi^{\tmop{gen}}_{\mathfrak{S}_{\mathbf{k}}} \left(\ZZ \left(P, \R^{k}
  \right) \right)$.
  }

  \PROCEDURE
  
 \State{ Pick $d'$, such that $d < d' \leq 2d$, $d'$ even, and $d'
    -1$ a prime number, by sieving through all possibilities, and testing for
    primality using the naive primality testing algorithm (i.e. testing for
    divisibility using the Euclidean division algorithm for each possible
    divisor).
    } \label{alg:ep-bounded:step1}
    
    \If {$d' -1 \mid k$} 
          \State{ $P \gets P+X_{k+1}^{2}$,
    $\mathbf{k} \gets \mathbf{k}' = (\mathbf{k},1)$, and $k \gets k+1$.}\label{alg:ep-bounded:step2}
    \EndIf

    \State{  $Q \gets \Def (P, \zeta,d')$.} \label{alg:ep-bounded:step3}

    \State{  $\chi^{\tmop{gen}} \gets 0$,
    $\chi_{\mathfrak{S}_{\mathbf{k}}}^{\tmop{gen}}  \gets  0$.} \label{alg:ep-bounded:step4}

    \For {\label{alg:ep-bounded:step5}
    each $\boldsymbol{\ell}= (\ell_{1}, \ldots, \ell_{\omega})
   ,1 \leq \ell_{i} \leq \min (k_{i},d')$, and each $\boldpi= (
    \pi^{(1)}, \ldots, \pi^{(\omega)}) \in
    \boldpi_{\mathbf{k},\boldsymbol{\ell}}$}

     \State{ $I \gets  \{
    (i,j) \mid  1 \leq i \leq \omega,1 \leq j \leq \ell_{i}$.
    }

    \State{ Let $\ZB^{(1)}, \ldots,\ZB^{(
      \omega)}$ be new blocks of variables, where each $\ZB^{(i)}
      = (Z^{(i)}_{1}, \ldots,Z^{(i)}_{\ell_{i}})$ is a block of
      $\ell_{i}$ variables, and $Q_{\boldpi} \in \D \langle \zeta
      \rangle [ \ZB^{(1)}, \ldots,\ZB^{(\omega)} ]$ be
      the polynomial obtained from $Q$ by substituting in $Q$ for each $(i,j
     )   \in I$ the variables 
     \[
     X_{\pi^{(i)}_{1} + \cdots + \pi_{j-1}^{(i
     )} +1}, \ldots,X_{\pi_{1}^{(i)} + \cdots + \pi_{j}^{(i)}}
     \] 
     by  $Z^{(i)}_{j}$.} \label{alg:ep-bounded:step5.1}

     \State{
      \begin{eqnarray*}
        \bar{Q}_{\boldpi} & \gets & Q_{\boldpi}^{2} + \sum_{(i,j)
       , (i',j') \in I} \left(\pi^{(i)}_{j} \frac{\partial 
        Q_{\boldpi}}{\partial Z^{(i)}_{j}} - \pi^{(i')}_{j'}
        \frac{\partial Q_{\boldpi}}{\partial Z^{(i')}_{j'}}
        \right)^{2}.
      \end{eqnarray*}
      }   \label{alg:ep-bounded:step5.2}

      \State{ Using Algorithm 12.64
      (Algebraic Sampling) from \cite{BPRbook3} compute a set $\mathcal{U_{\pi}}$ of real univariate
      representations representing the finite set of points 
\[
C= \ZZ \left(
      \bar{Q}_{\boldpi}, \R \langle \zeta \rangle^{k} \right).
\]
      } \label{alg:ep-bounded:step5.3}

\algstore{myalg}
\end{algorithmic}
\end{algorithm}
 
\begin{algorithm}[H]
\begin{algorithmic}[1]
\algrestore{myalg}

      \State{
       Let $\Hess_{\boldpi} (\ZB^{(1
     )}, \ldots,\ZB^{(\omega)})$ be the symmetric matrix
      obtained by substituting for each $(i,j) \in I$ the variables
      \[X_{\pi^{(i)}_{1} + \cdots + \pi_{j-1}^{(i)} +1}, \ldots
     ,X_{\pi_{1}^{(i)} + \cdots + \pi_{j}^{(i)}},
      \] 
      by $Z^{(i)}_{j}$ in
      the $(k-1) \times (k-1)$ matrix $H$ whose rows and columns are
      indexed by $[ 2,k ]$, and which is defined by:
      \begin{eqnarray*}
        H_{i,j} & = & \dfrac{\partial^{2} Q}{\partial (X_{1} +X_{i})
        \partial (X_{1} +X_{j})}  ,2 \leq i,j \leq k.
      \end{eqnarray*} 
      }\label{alg:ep-bounded:step5.4}

      \For {  
      \label{alg:ep-bounded:step5.5}
      each point $\zb \in C$ represented by $u_{\zb} \in
      \mathcal{U}_{\boldpi}$} 
       compute using Algorithm 10.98 (Univariate
      Sign Determination) in \cite{BPRbook3}, the sign of the polynomial
      \[
      \sum_{_{(i,j) \in I}} \dfrac{\partial Q_{\boldpi}}{\partial
      Z^{(i)}_{j}},
      \] as well as the index, $\ind^{-} (
      \Hess_{\boldpi})$, at the point $\zb$.
      \EndFor

      \State{  Using Gauss-Jordan elimination (over the real
      univariate representation $u_{\zb}$), and Algorithm 10.98 (Univariate Sign Determination) from \cite{BPRbook3}, 
      determine if the negative eigenspace, $L^{-} (
      \Hess_{\boldpi} (\zb))$ of the symmetric matrix
      $\Hess_{\boldpi} (\zb)$ is contained in the subspace $L$
      defined by \[
      \sum_{i=1}^{k} X_{i} =0.
      \]
      } \label{alg:ep-bounded:step5.6}
      
      \If {$\sum_{_{(i,j) \in I}} \dfrac{\partial
      Q_{\boldpi}}{\partial Z^{(i)}_{j}}   (\zb) <0$}

      \State{\begin{eqnarray*}
        \chi^{\tmop{gen}} & \gets & \chi^{\tmop{gen}} + (-1
       )^{\ind^{-} (\Hess_{\boldpi} (\zb))}
        \prod_{i=1}^{\omega} \binom{k_{i}}{\pi^{(i)}_{1}, \ldots, \pi^{(i
       )}_{\ell_{i}}}.
      \end{eqnarray*}
      }
      \EndIf \label{alg:ep-bounded:step5.7}
      \If {$\sum_{_{(i,j) \in I}} \dfrac{\partial
      Q_{\boldpi}}{\partial Z^{(i)}_{j}}   (\zb) <0$, and $L^{-} (
      \Hess_{\boldpi} (\zb)) \subset L$}
      \State{
      \begin{eqnarray*}
        \chi^{\tmop{gen}}_{\mathfrak{S}_{\mathbf{k}}} & \gets &
        \chi_{\mathfrak{S}_{\mathbf{k}}}^{\tmop{gen}} + (-1)^{\ind^{-}
        (\Hess_{\boldpi} (\zb))}.
      \end{eqnarray*}
      } 
      \EndIf \label{alg:ep-bounded:step5.8}

    \State{ Output
    \begin{eqnarray*}
      \chi^{\tmop{gen}} \left(\ZZ \left(P, \R^{k} \right) \right)   &
      = & \chi^{\tmop{gen}},\\
      \chi^{\tmop{gen}}_{\mathfrak{S}_{\mathbf{k}}} \left(\ZZ \left(P,
      \R^{k} \right) \right) & = &
      \chi^{\tmop{gen}}_{\mathfrak{S}_{\mathbf{k}}}.
    \end{eqnarray*}
    }\label{alg:ep-bounded:step6}
  \EndFor
  \end{algorithmic}
\end{algorithm}
}

\paragraph{\sc Proof of correctness}
The correctness of the
algorithm follows from Propositions \ref{prop:alg-to-semialg},
\ref{prop:half-degree}, \ref{prop:non-degenerate}, Theorem
\ref{thm:equivariant-ep}, as well as the correctness of Algorithms 12.64 (Algebraic Sampling) and 
Algorithm 10.98 (Univariate Sign Determination) in \cite{BPRbook3}.

\paragraph{\sc Complexity analysis}
The complexity of Step \ref{alg:ep-bounded:step1} is bounded by 
$d^{O (
1)}$. The complexities of Steps 
\ref{alg:ep-bounded:step2}, \ref{alg:ep-bounded:step3}, \ref{alg:ep-bounded:step5.1}
are all bounded by $(\omega
k)^{O (d)}$. 
Using the complexity analysis of Algorithm 12.64 (Algebraic Sampling) in \cite{BPRbook3}, the complexity of Step 
\ref{alg:ep-bounded:step5.3}
is bounded by $(\length (
\boldpi) d)^{O (\length (
\boldpi))}$. The number and the degrees of the real
univariate representations output in 
Step 
\ref{alg:ep-bounded:step5.3}
are bounded by $d^{O (
\length (\boldpi))}$. The
complexity of Step 
\ref{alg:ep-bounded:step5.5}
 is bounded by $d^{O (
\length (\boldpi))}$ using
the complexity analysis of Algorithm 10.98 (Univariate Sign Determination) in \cite{BPRbook3}. Each arithmetic operation in the Gauss-Jordan elimination in
Step 
\ref{alg:ep-bounded:step5.6}
occurs in a ring $D [ \zeta ] [ T ] / (f (T))$ (where $u_{\zb} = (
f,g_{0,} \ldots,g_{\length (
\boldpi)}), \rho_{\zb}$) with $\deg_{T, \zeta} (f)
=d^{O (\length (\boldpi)
)}$). The number of such operations in the ring $D [ \zeta ] [ T ] / (f (T)
)$ is bounded by $(\length (
\boldpi) +k)^{O (1)}$. Thus, the total number of
arithmetic operations in the ring $\D$ performed in Step 
\ref{alg:ep-bounded:step5.6}
  is bounded by
$(\length (\boldpi) k d
)^{O (\length (\boldpi)
)}$.

  The number of iterations of 
Step  \ref{alg:ep-bounded:step5} is bounded by the number of partitions
$\boldpi \in \Pi_{\mathbf{k},\boldsymbol{\ell}}$ with
$\boldsymbol{\ell}= (\ell_{1}, \ldots, \ell_{\omega}),1 \leq \ell_{i} \leq \min
(k_{i},d'),1 \leq i \leq \omega$, which is bounded by
\begin{eqnarray*}
  \sum_{\boldsymbol{\ell}= (\ell_{1}, \ldots, \ell_{\omega}),1 \leq \ell_{i}
  \leq \min (k_{i},d')} p (\mathbf{k},\boldsymbol{\ell}) & = & k^{O (D)},
\end{eqnarray*}
where $D = \sum_{i=1}^{\omega} \min(k_i,2d)$.
Thus, the total complexity of the algorithm measured by the number of
arithmetic operations (including comparisons) in the ring $\D$ is bounded by
$(\omega  k d)^{O (D)}$.

{\small
\begin{algorithm}[H]
\caption{(Generalized Euler-Poincar{\'e}
characteristic for symmetric algebraic sets)}
\label{alg:ep-general-BM}
\begin{algorithmic}[1]
\INPUT
\Statex{

  \begin{enumerate}
    \item A tuple $\mathbf{k}= (k_{1}, \ldots,k_{\omega}) \in
    \Z_{>0}^{\omega}$, with $k= \sum_{i=1}^{\omega} k_{i}$.
    
    \item A polynomial $P \in \D [ \X^{(1)}, \ldots
   ,\X^{(\omega)} ]$, where each $\X^{(i)}$ is a
    block of $k_{i}$ variables, and $P$ is symmetric in each block of
    variables $\X^{(i)}$, with $\deg (P) =d$.
  \end{enumerate}
  }
 \OUTPUT
 \Statex{
   $\chi^{\tmop{gen}} \left(\ZZ \left(P, \R^{k} \right) \right)  $
  and 
  $\chi_{\mathfrak{S}_{\mathbf{k}}}^{\tmop{gen}} \left(\ZZ \left(P, \R^{k}
  \right) \right)$.
  }

\PROCEDURE
  
\State{
    \begin{eqnarray*}
      P_{1} & \gets & P^{2} + \left(X_{k+1}^{2} + \sum_{i=1}^{k} X_{i}^{2} + 
      - \Omega^{2} \right)^{2}  ,\\
      P_{2} & = & P^{2} + \left(\sum_{i=1}^{k} X_{i}^{2}  - \Omega^{2}
      \right)^{2}.
    \end{eqnarray*}
    }\label{alg:ep-general-BM:step1}

\State{
    Using Algorithm \ref{alg:ep-bounded} with $P_{1}$ and
    $P_{2}$ as input compute:
    \begin{eqnarray*}
      \chi^{(1)} & = & \chi^{\tmop{gen}} \left(\ZZ \left(P_{1}, \R
      \langle 1/ \Omega \rangle^{k+1} \right) \right),\\
      \chi^{(1)}_{\mathfrak{S}_{\mathbf{k}'}} & = &
      \chi_{\mathfrak{S}_{\mathbf{k}'}}^{\tmop{gen}} \left(\ZZ \left(P_{1},
      \R \langle 1/ \Omega \rangle^{k+1} \right) \right),\\
      \chi^{(2)} & = & \chi^{\tmop{gen}} \left(\ZZ \left(P_{2}, \R
      \langle 1/ \Omega \rangle^{k} \right) \right),\\
      \chi^{(2)}_{\mathfrak{S}_{\mathbf{k}}} & = &
      \chi_{\mathfrak{S}_{\mathbf{k}}}^{\tmop{gen}} \left(\ZZ \left(P_{2},
      \R \langle 1/ \Omega \rangle^{k} \right) \right),
    \end{eqnarray*}
    where $\mathbf{k}' = (\mathbf{k},1)$.
    }\label{alg:ep-general-BM:step2}

\State{  Output
    \begin{eqnarray*}
      \chi^{\tmop{gen}} \left(\ZZ \left(P, \R^{k} \right) \right) & = &
      \tfrac{1}{2} (\chi^{(1)} - \chi^{(2)}),\\
      \chi_{\mathfrak{S}_{\mathbf{k}}}^{\tmop{gen}} \left(\ZZ \left(P,
      \R^{k} \right) \right) & = & \tfrac{1}{2} (
      \chi_{\mathfrak{S}_{\mathbf{k}'}}^{(1)} -
      \chi_{\mathfrak{S}_{\mathbf{k}}}^{(2)}).
    \end{eqnarray*}
    }\label{alg:ep-general-BM:step3}
 \end{algorithmic}
 \end{algorithm}
}

\paragraph{\sc Proof of correctness} 
Since $V= \ZZ \left(P,
\R^{m+k} \right)$ is closed, by Proposition \ref{prop:ep-unbounded} we have
that
\begin{eqnarray}
\nonumber
  \chi^{\tmop{gen}} (V) & = & \chi^{\tmop{BM}} (V,\mathbb{Q}) \\
  \nonumber
  & = & \chi^{\mathrm{top}} \left(\Ext \left(V, \R \langle 1/ \Omega \rangle \right) \cap
  \overline{B_{k} (0, \Omega)} \right) - \\
  \nonumber
  &&\chi^{\mathrm{top}} \left(\Ext \left(V, \R
  \langle 1/ \Omega \rangle \right) \cap S^{k-1} (0, \Omega) \right)
  \nonumber\\
  & = & \chi^{\mathrm{top}} \left(\Ext \left(V, \R \langle 1/ \Omega \rangle \right) \cap
  \overline{B_{k} (0, \Omega)} \right) - \chi^{(2)}.  \label{eqn:proof1}
\end{eqnarray}
Now $\ZZ \left(P_{1}, \R \langle 1/ \Omega \rangle^{k+1} \right)$ is
semi-algebraically homeomorphic to two copies of 
$$\Ext \left(V, \R \langle 1/
\Omega \rangle \right) \cap \overline{B_{k} (0, \Omega)},
$$ 
glued along a
semi-algebraically homeomorphic copy of 
$$
\Ext \left(V, \R \langle 1/ \Omega
\rangle \right) \cap S^{k-1} (0, \Omega) = \ZZ \left(P_{2}, \R \langle 1/
\Omega \rangle^{k} \right).
$$ 
It follows that,
\begin{eqnarray*}
   \chi^{(1)} &=&\chi^{\tmop{gen}} \left(\ZZ \left(P_{1}, \R \langle 1/ \Omega
  \rangle^{k+1} \right) \right) \\
  & = & 2 \chi^{\mathrm{top}} \left(\Ext \left(V, \R \langle 1/ \Omega \rangle \right)
  \cap \overline{B_{k} (0, \Omega)} \right)  - \chi^{\tmop{gen}} \left(\ZZ
  \left(P_{2}, \R \langle 1/ \Omega \rangle^{k} \right) \right)\\
  & = & 2 \chi^{\mathrm{top}} \left(\Ext \left(V, \R \langle 1/ \Omega \rangle \right)
  \cap \overline{B_{k} (0, \Omega)} \right) - \chi^{(2)},
\end{eqnarray*}
and hence
\begin{eqnarray}
  \chi^{\mathrm{top}} \left(\Ext \left(V, \R \langle 1/ \Omega \rangle \right) \cap
  \overline{B_{k} (0, \Omega)} \right) & = & \tfrac{1}{2} (\chi^{(1)} +
  \chi^{(2)}).  \label{eqn:proof2}
\end{eqnarray}
It now follows from Eqns. (\ref{eqn:proof1}) and (\ref{eqn:proof2}) that
\begin{eqnarray*}
  \chi^{\tmop{gen}} (V) & = & \tfrac{1}{2} (\chi^{(1)} + \chi^{(2)}) -
  \chi^{(2)}\\
  & = & \tfrac{1}{2} (\chi^{(1)} - \chi^{(2)}).
\end{eqnarray*}
The proof for the correctness of the computation of
$\chi^{\tmop{gen}}_{\mathfrak{S}_{\mathbf{k}}} (V)$ is similar and
omitted.

\paragraph{\sc Complexity analysis}
The complexity of the algorithm measured by the
number of arithmetic operations (including comparisons) in the ring $\D$ is
bounded by $(\omega  k d)^{O (D)}$, where $D = \sum_{i=1}^\omega \min(k_i,2d)$. This follows
directly from the complexity analysis of Algorithm \ref{alg:ep-bounded}.

\begin{proof}[Proof of Theorem \ref{thm:algorithm-algebraic}] The
correctness and the complexity analysis of Algorithm \ref{alg:ep-general-BM}
prove Theorem \ref{thm:algorithm-algebraic}.
\end{proof}

\subsection{Computing the generalized Euler-Poincar{\'e} characteristic of
symmetric semi-algebraic sets}

We now consider the problem of computing the (generalized)
Euler-Poincar{\'e} characteristic of semi-algebraic sets. We reduce the
problem to computing the generalized Euler-Poincar{\'e} characteristic of
certain symmetric algebraic sets for which we already have an efficient
algorithm described in the last section. This reduction process follows very
closely the spirit of a similar reduction that is used in an algorithm for
computing the generalized Euler-Poincar{\'e} characteristic of the
realizations of all realizable sign conditions of a family of polynomials
given in {\cite{BPR-euler-poincare}} (see also \cite{BPRbook3}).

We first need an efficient algorithm for computing the set of realizable sign
conditions of a family of symmetric polynomials which will be used later. The
following algorithm can be considered as an equivariant version of a very
similar algorithm -- namely, Algorithm 13.9 (Computing Realizable Sign
Conditions) in \cite{BPRbook3} -- for solving the same problem in the
non-equivariant case.

{\small
\begin{algorithm}[H]
\caption{(Computing Realizable Sign Conditions of Symmetric
Polynomials)}
\label{alg:sampling}
\begin{algorithmic}[1]
\INPUT
\Statex{
\begin{enumerate}
    \item A tuple $\mathbf{k}= (k_{1}, \ldots,k_{\omega}) \in
    \Z_{>0}^{\omega}$, with $k= \sum_{i=1}^{\omega} k_{i}$.
    
    \item A set of $s$ polynomials $\mathcal{P}= \{ P_{1}, \ldots,P_{s} \}
    \subset \D [ \X^{(1)}, \ldots,\X^{(\omega)} ]$,
    where each $\X^{(i)}$ is a block of $k_{i}$ variables, and
    each polynomial in $\mathcal{P}$ is symmetric in each block of variables
    $\X^{(i)}$ and of degree at most $d$.
  \end{enumerate}
}
\OUTPUT
\Statex{
  $\SIGN (\mathcal{P})$.
  }
  
 \PROCEDURE

    \For {each $i,1 \leq i \leq s$   \label{alg:sampling:step1}}  
        $\mathcal{P}^{\star}_{i} \gets  \{ P_{i} \pm \gamma \delta_{i},P_{i} \pm \delta_{i} \}$.
    \EndFor
    
    \For {
    \label{alg:sampling:step2}
    every choice of $j \le D'  = 
    \sum_{i=1}^{\omega} \min (k_{i},d)$ polynomials $Q_{i_{1}} \in
    \mathcal{P}_{i_{1}}^{\star}, \ldots,Q_{i_{j}} \in
    \mathcal{P}_{i_{j}}^{\star}$, with $1 \leq i_{1} < \cdots <i_{j} \leq s$
    }
    
    \State{
      
      $$\displaylines{ 
      Q_1\gets Q_{i_{1}}^{2} + \cdots +Q_{i_{j}}^{2}, \cr
      Q_2 \gets Q_{i_{1}}^{2} + \cdots +Q_{i_{j}}^{2} + \left(\eps(|\X^{(1)}|^2+\cdots+|\X^{(\omega)}|^2) -1\right)^2.
      }
      $$
      \label{alg:sampling:step2.1}
      }

      \For {each $\boldsymbol{\ell}= (\ell_{1}, \ldots,
      \ell_{\omega}),1 \leq \ell_{i} \leq \min (k_{i},4d)$, and each
      partition $\boldpi= (\pi^{(1)}, \ldots, \pi^{(\omega)})
      \in \boldpi_{\mathbf{k},\boldsymbol{\ell}}$
      \label{alg:sampling:step2.2}
      }

       \State{ $I
      \gets  \{ (i,j) \mid  1 \leq i \leq \omega,1 \leq j \leq \ell_{i} \}
       $
       \label{alg:sampling:step2.2.0}
       }

      \State{ 
        Let $\ZB^{(1)}, \ldots,\ZB^{(
        \omega)}$ be new blocks of variables, where each $\ZB^{(i
       )} = (Z^{(i)}_{1}, \ldots,Z^{(i)}_{\ell_{i}})$ is a block of
        $\ell_{i}$ variables, and 
$
Q_{1,\boldpi},  
Q_{2,\boldpi} 
$ be the
        polynomials obtained from $Q_1,Q_2$ respectively, by substituting 
        for each $(i,j) \in I$ the variables \[
        X_{\pi^{(i)}_{1} + \cdots + \pi_{j-1}^{(i)}
        +1}, \ldots,X_{\pi_{1}^{(i)} + \cdots + \pi_{j}^{(i)}}
        \] by $Z^{(
        i)}_{j}$.
        \label{alg:sampling:step2.2.1}
        }

        \State{ Using Algorithm 12.64 
        (Algebraic Sampling) from \cite{BPRbook3},
         compute a set $\mathcal{U}_{\boldpi,i}, i = 1,2$ of
        real univariate representations representing the finite set of points
        \[
        C_1, C_2 \subset \ZZ \left(Q_{1,\boldpi}, \R'^{\length (
        \boldpi)} \right),
        \] 
        where $\R' = \R \langle \eps, \delta_{i_{1}},
        \ldots, \delta_{i_{j}}, \gamma,\zeta\rangle$.
        \label{alg:sampling:step2.2.2}
        }
        \State{Apply the $\lim_{\gamma}$ using  Algorithm 12.46 (Limit of Bounded Points) in \cite{BPRbook3},  to the points in $C_1,C_2$ which are bounded over 
        $ \R \langle \eps,\delta_{i_{1}},
        \ldots, \delta_{i_{j}} \rangle$,  and obtain a set of real
      univariate representations $(u, \sigma )$ with
      \[ u= (f(T),g_{0} (T), \ldots,g_{\length(\boldpi)} (T)) \in \D [\eps,\delta_{i_1},\ldots,\delta_{i_j}] [T]^{\length(\boldpi)+2} \]
      Add these real univariate representations to $\mathcal{U}_{\boldpi}$.
      \label{alg:sampling:step2.2.3}}
    \For {each $u \in \mathcal{U}_{\boldpi}$    \label{alg:sampling:step2.2.4}}
        \State{Compute the signs of $P_{\boldpi}$ for each $P \in \mathcal{P}$ at the points $\zb$, associated to $u$ using Algorithm 10.98
    (Univariate Sign Determination) from \cite{BPRbook3},
    where $P_{\boldpi}
        \in \R' [ \ZB^{(1)}, \ldots,\ZB^{(\omega)} ]$
        is the polynomial obtained from $P$ by substituting in $P$ for each $(
        i,j)   \in I$ the variables 
        \[X_{\pi^{(i)}_{1} + \cdots +
        \pi_{j-1}^{(i)} +1}, \ldots,X_{\pi_{1}^{(i)} + \cdots +
        \pi_{j}^{(i)}}\]
         by $Z^{(i)}_{j}$.
    \label{alg:sampling:step2.2.5}
    }

\algstore{myalg}
\end{algorithmic}
\end{algorithm}
 
\begin{algorithm}[H]
\begin{algorithmic}[1]
\algrestore{myalg}

    \State{ Let $\sigma_{\zb} \in \{0,1,-1\}^{\mathcal{P}}$ be the sign vector defined by $\sigma(P) = \sign(P_{\boldpi}(\zb))$.
    \label{alg:sampling:step2.2.6}
    }
      \State{ $\tmop{SIGN}  :=  \tmop{SIGN}   \cup \{ \sigma_{\zb} \}$. \label{alg:sampling:step2.2.7}
      }
      \EndFor
        
        \EndFor
    \EndFor

    \State{Output $\tmop{SIGN} (\mathcal{P}) = \tmop{SIGN}$.} \label{alg:sampling:step3}
\end{algorithmic}
\end{algorithm}
}

\paragraph{\sc Proof of correctness}

We first need a lemma whose proof can be found in \cite{BC2013}.
\begin{definition}
  For any finite family $\mathcal{P} \subset \R [ X_{1}, \ldots,X_{k} ]$ and
  $\ell \geq 0$, we say that $\mathcal{P}$ is in $\ell$-general position with
  respect to a semi-algebraic set $V \subset \R^{k}$ if for any subset
  $\mathcal{P}' \subset \mathcal{P}$, with $\card (
  \mathcal{P}') > \ell$, $\ZZ (\mathcal{P}',V) = \emptyset$. 
\end{definition}

Let $\mathbf{k}= (k_{1}, \ldots,k_{\omega})$ with $k=
\sum_{i=1}^{\omega} k_{i}$, and 
\[
\mathcal{P} = \{ P_{1}, \ldots,P_{s} \} \subset \R [\X^{(1)}, \ldots,\X^{(\omega)} ]^{\mathfrak{S}_\kk}
\]
be a fixed finite set of polynomials where $\X^{(i)}$ is a block of $k_{i}$
variables.
Let $\deg (P_{i}) \leq d$ for $1 \leq
i \leq s$. Let $\overline{\eps} = \left(\eps_{1}, \ldots, \eps_{s} \right)$
be a tuple of new variables, and let $\mathcal{P}_{\overline{\eps}} =
\bigcup_{1 \leq i \leq s} \left\{ P_{i}   \pm \eps_{i} \right\}$. 

The following lemma appears in \cite{BC2013}.

\begin{lemma}\cite[Lemma 7]{BC2013}
  \label{lem:gen-pos1-with-parameters}Let
  \begin{eqnarray*}
    D'  & = & \sum_{i=1}^{\omega} \min (k_{i},d).
  \end{eqnarray*}
  The set of polynomials  $\mathcal{P}_{\overline{\eps}} \subset \R' [
  \X^{(1)}, \ldots,\X^{(
  \omega)} ]$ is in $D'$-general position for any semi-algebraic subset $Z
  \subset \R^{k}$ stable under the action of $\mathfrak{S}_{\mathbf{k}}$,
  where $\R' = \R \langle \overline{\eps} \rangle$.
\end{lemma}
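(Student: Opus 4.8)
The plan is to reduce the equivariant assertion to the classical fact that a family of polynomials becomes ``in general position'' after perturbation by independent infinitesimals, but carried out in the quotient space $\R^{D'}$ rather than in $\R^k$, the point being that bounded‐degree symmetric polynomials are really functions on $\R^{D'}$.

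First I would record the structural fact that makes $D'$ the right number. The invariant ring $\R[\X^{(1)},\ldots,\X^{(\omega)}]^{\mathfrak{S}_{\mathbf{k}}}$ is the tensor product over the blocks of the one‐block invariant rings $\R[\X^{(j)}]^{\mathfrak{S}_{k_j}}=\R[p_1^{(j)},\ldots,p_{k_j}^{(j)}]$, where $p_\ell^{(j)}=\sum_{t}(X^{(j)}_t)^\ell$; and by Newton's identities a polynomial symmetric in $\X^{(j)}$ of degree at most $d$ already lies in $\R[p_1^{(j)},\ldots,p_{\min(k_j,d)}^{(j)}]$. Hence each $P_i\in\mathcal{P}$, being symmetric in every block and of total degree at most $d$, factors as $P_i=\widetilde P_i\circ\Psi$, where $\Psi\colon\R^k\to\R^{D'}$ is the polynomial map whose $D'=\sum_j\min(k_j,d)$ components are the power sums $p_\ell^{(j)}$ with $1\le\ell\le\min(k_j,d)$, and $\widetilde P_i\in\R[Y_1,\ldots,Y_{D'}]$ (the chosen power sums are algebraically independent over $\R$, so the $\widetilde P_i$ are well defined; their degrees are irrelevant here).

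Next I would push the problem through $\Psi$. Let $\mathcal{P}'\subset\mathcal{P}_{\overline\eps}$ with $\card(\mathcal{P}')>D'$. If $\mathcal{P}'$ contains both $P_i+\eps_i$ and $P_i-\eps_i$ for some $i$ then $\ZZ(\mathcal{P}',\Ext(Z,\R'))$ is empty, since it would force $\eps_i=0$; so assume $\mathcal{P}'=\{P_{i_\ell}+\eta_\ell\eps_{i_\ell}\mid 1\le\ell\le m\}$ with $m>D'$, distinct indices $i_1<\cdots<i_m$ and signs $\eta_\ell\in\{1,-1\}$. Because $P_{i_\ell}=\widetilde P_{i_\ell}\circ\Psi$ identically, $\Psi$ maps $\ZZ(\mathcal{P}',\R'^{k})$ into $\ZZ(\{\widetilde P_{i_\ell}+\eta_\ell\eps_{i_\ell}\mid 1\le\ell\le m\},\R'^{D'})$, so it suffices to show the latter is empty, i.e.\ that the point $(-\eta_1\eps_{i_1},\ldots,-\eta_m\eps_{i_m})$ does not lie in the image over $\R'$ of the polynomial map $\Phi\colon\R^{D'}\to\R^m$ with components $\widetilde P_{i_1},\ldots,\widetilde P_{i_m}$. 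That image is $\Ext(\mathcal{Y},\R')$ for a semi-algebraic set $\mathcal{Y}=\Phi(\R^{D'})\subset\R^m$ with $\dim\mathcal{Y}\le D'<m$.

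Finally I would prove the needed statement: if $\mathcal{Y}\subset\R^m$ is semi-algebraic with $\dim\mathcal{Y}<m$ and $\eps_1\gg\cdots\gg\eps_m$ are infinitesimals, then for any signs $\eta_1,\ldots,\eta_m\in\{1,-1\}$ one has $(-\eta_1\eps_1,\ldots,-\eta_m\eps_m)\notin\Ext(\mathcal{Y},\R\langle\eps_1,\ldots,\eps_m\rangle)$. This goes by induction on $m$: the case $m=1$ holds because a $0$-dimensional semi-algebraic subset of $\R$ is finite and its extension contains no infinitesimal. For the inductive step, slice by the first coordinate, writing $\mathcal{Y}_c=\{z\in\R^{m-1}\mid (c,z)\in\mathcal{Y}\}$; by the fibre-dimension theorem for semi-algebraic families \cite[Chapter 5]{BPRbook3} the set $F=\{c\in\R\mid\dim\mathcal{Y}_c\ge m-1\}$ is finite, so $\dim\mathcal{Y}_c\le m-2$ for all $c\in\R\setminus F$; transferring this first-order statement to the real closed extension $\R\langle\eps_1\rangle$ and specializing the parameter to $-\eta_1\eps_1$ (which lies outside the finite set $F\subset\R$) shows that the $(-\eta_1\eps_1)$-fibre of $\Ext(\mathcal{Y},\R\langle\eps_1\rangle)$ has dimension at most $m-2$; now apply the inductive hypothesis over $\R\langle\eps_1\rangle$ with the remaining infinitesimals $\eps_2\gg\cdots\gg\eps_m$. (Alternatively one may simply invoke the non-equivariant ``general position after perturbation'' statement underlying the correctness of Algorithm~13.9 of \cite{BPRbook3}, applied in $\R^{D'}$.) I expect the only delicate point to be this transfer of a fibre-dimension statement to a parameter value that is infinitesimal over the base field, which is handled by the standard behaviour of dimensions of semi-algebraic families under real closed extensions. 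Note that, once the reduction through $\Psi$ is made, the $\mathfrak{S}_{\mathbf{k}}$-stability of $Z$ plays no further role — only the symmetry of the $P_i$ is used — so the conclusion in fact holds for an arbitrary semi-algebraic $Z\subset\R^k$.
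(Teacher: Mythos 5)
The paper does not reprove this lemma (it is quoted from \cite{BC2013}), but your argument is correct and is essentially the intended one: the degree bound lets each $P_i$ factor through the power-sum map $\Psi\colon\R^k\to\R^{D'}$, so a common zero of more than $D'$ of the perturbed polynomials would place a point whose coordinates are (up to sign) distinct algebraically independent infinitesimals inside the extension of a semi-algebraic set of dimension at most $D'<m$ defined over $\R$, which your induction on coordinates (or, equivalently, the non-equivariant perturbation/general-position argument of \cite{BPRbook3} applied in $\R^{D'}$) correctly rules out. Your closing observation is also accurate: the proof uses only the symmetry and degree bound on the $P_i$, not the $\mathfrak{S}_{\mathbf{k}}$-stability of $Z$.
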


Now observe that Lemma \ref{lem:gen-pos1-with-parameters} implies
that the set $\bigcup_{1 \leq i \leq s} \mathcal{P}_{i}^{\star}$ is in $D'$-general position. Propositions 13.1 and 13.7 in \cite{BPRbook3} together imply  that the image under the $\lim_{\gamma}$ map of any finite set of
points meeting every bounded semi-algebraically connected component of each algebraic
set defined by polynomials 
$$\displaylines{
Q_{i_{1}} \in \mathcal{P}_{i_{1}}^{\star}, \ldots,Q_{i_{j}} \in \mathcal{P}_{i_{j}}^{\star},\cr
Q_{i_{1}} \in \mathcal{P}_{i_{1}}^{\star}, \ldots,Q_{i_{j}} \in \mathcal{P}_{i_{j}}^{\star}, Q_0,
}
$$
where  $1  \leq i_{1} < \cdots <i_{j}  
\leq s$, $1 \leq j \leq D'$, and
$Q_0 = |\eps|(|\X^{(1)}|^2+ \cdots + |\X^{(\omega)}|^2)-1$, will intersect every
semi-algebraically connected component of $\RR \left(\sigma, \R^{k} \right)$
for every $\sigma \in \tmop{SIGN} (\mathcal{P})$. 

Moreover, noticing that the degrees of the polynomials $Q_{i_j}$ above are bounded by $2d$, it follows from Corollary \ref{cor:half-degree} that each semi-algebraically connected component of the algebraic sets listed above has a non-empty intersection with
$A^{\boldsymbol{\ell}}_{\kk}$, for some $\boldsymbol{\ell}= (\ell_1,\ldots,\ell_\omega)$,
and $1 \leq \ell_1 \leq \min(k_i,4d), 1\leq i \leq \omega$.

The correctness of the
algorithm now follows from the correctness of Algorithm 12.64
(Algebraic Sampling)  and
Algorithm 10.98 (Univariate Sign Determination) in \cite{BPRbook3}.

\paragraph{\sc Complexity analysis}
The complexity of Step 
\ref{alg:sampling:step2}
  measured by the number of arithmetic operations in
the ring $\D [ \delta_{1}, \ldots, \delta_{s}, \gamma ]$ is bounded by 
\[
O
\left(D'\binom{k+d}{k} \right),
\] 
where $D' = \sum_{i=1}^{\omega} \min(k_i,d)$.

It follows from the
complexity analysis of Algorithm 12.64 (Algebraic Sampling) in \cite{BPRbook3}  that each call to Algorithm 
12.64  (Algebraic Sampling) in Step 
\ref{alg:sampling:step2.2.2}
  requires $d^{O (\length (\boldpi))}$
arithmetic operations in the ring $\D [\eps, \delta_{1}, \ldots, \delta_{s},
\gamma ]$. The number and degrees of the real univariate representations $u_{\boldpi,i}$ output in Step 
\ref{alg:sampling:step2.2.2}
  is bounded by $d^{O (\length(\boldpi))}$.
Using the complexity analysis of Algorithm 12.46  (Limit  of Bounded Points) in \cite{BPRbook3}, each call to Algorithm 12.46   (Limit of Bounded Points)  in Step
\ref{alg:sampling:step2.2.3}
  requires $d^{O (\length(
\boldpi))}$ arithmetic operations in the ring $\D[\eps, \delta_1,\ldots,\delta_s,\gamma]$,
and thus the total complexity of this step
in the whole algorithm across all iterations 
measured by the number of arithmetic operations in the ring $\D
[ \eps,\delta_{1}, \ldots, \delta_{s}]$ is bounded by 
$$\sum_{j=1}^{D'} 2^{j} \binom{s}{j} \left(d^{O (D'')} +O
\left(D'  \binom{k+d}{k} \right) \right),$$
where $D'' = \sum_{i=1}^\omega \min(k_i,4d)$,
noting that
$\length (\boldpi) \leq D''$.

Similarly, using the complexity analysis of Algorithm 10.98  (Univariate Sign Determination) in \cite{BPRbook3}, each call to Algorithm 
10.98 (Univariate Sign Determination) in Step
\ref{alg:sampling:step2.2.5}
  requires $d^{O (\length(
\boldpi))}$ arithmetic operations in the ring $\D[\eps, \delta_1,\ldots,\delta_s]$,
and thus the total complexity of this step
in the whole algorithm across all iterations
measured by the number of arithmetic operations in the ring $\D
[ \eps,\delta_{1}, \ldots, \delta_{s}]$ is bounded by 

$$\sum_{j=1}^{D'} 2^{j} \binom{s}{j} \left(d^{O (D'')} +O
\left(D'  \binom{k+d}{k} \right) \right).$$
However, notice
that in each call to  Algorithm 12.64 (Algebraic
Sampling) from \cite{BPRbook3} in Step 
\ref{alg:sampling:step2.2.2},
to Algorithm 12.46 (Limits of Bounded Points) in \cite{BPRbook3} in Step \ref{alg:sampling:step2.2.3}, as well as
and also in the calls to Algorithm 10.98 (Univariate Sign Determination) from \cite{BPRbook3} in Step 
\ref{alg:sampling:step2.2.5}, the
arithmetic is done in a ring $\D$ adjoined with $O (D')$
infinitesimals. Hence, the total number of arithmetic operations in $\D$ is
bounded by

$$\displaylines{
\sum_{j=1}^{D'} 2^{j} \binom{s}{j} \left(d^{O (
D'D'' )} +O \left(D'   \binom{k+d}{k}
\right) \right) = 
s^{D'} k^{d} d^{O (
D'D'')}.
}
$$

The total number of real univariate representations produced in Step 
\ref{alg:sampling:step2.2.2}
  is
bounded by 

$$
\sum_{j=1}^{D'} 2^{j} \binom{s}{j} d^{O (D'')} =s^{D'} d^{O (D'')}.
$$
Their degrees are bounded by $d^{O (D'')}$. Thus, the total
number of real points associated to these univariate representations, and
hence also 
\[
\card (\tmop{SIGN} (\mathcal{P}))  = s^{D'} d^{O (D'')}.
\]

The complexity analysis of Algorithm \ref{alg:sampling} yields the following
purely mathematical result.

\begin{proposition}
  \label{prop:SIGN} Let $\mathbf{k}= (k_{1}, \ldots,k_{\omega}) \in
  \Z_{>0}^{\omega}$, with $k= \sum_{i=1}^{\omega} k_{i}$, and let
  $\mathcal{P}= \{ P_{1}, \ldots,P_{s} \} \subset \R [
  \X^{(1)}, \ldots,\X^{(
  \omega)} ]$ be a finite set of polynomials, where each
  $\X^{(i)}$ is a block of $k_{i}$ variables, and
  each polynomial in $\mathcal{P}$ is symmetric in each block of variables
  $\X^{(i)}$. Let $\card
  (\mathcal{P}) =s$, and $\max_{P \in \mathcal{P}}   \deg (P) =d$. Then,
  \begin{eqnarray*}
    \card ( \SIGN (
    \mathcal{P})) & = & s^{D'} d^{O (D''
   )} ,
  \end{eqnarray*}
  where $D'= \sum_{i=1}^{\omega} \min (k_{i},d)$, and 
  $D''= \sum_{i=1}^{\omega} \min (k_{i},4d)$.
  
  In particular, if for each $i,1 \leq i \leq \omega $, $d \leq k_{i}$, then
  $\card ( \SIGN (
  \mathcal{P}))$ can be bounded independent of $k$.
\end{proposition}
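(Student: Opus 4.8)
The plan is to extract the bound from the complexity analysis of Algorithm~\ref{alg:sampling}, which (as established above) correctly computes $\SIGN(\mathcal P)$: its output is precisely the set of sign vectors $\sigma_{\zb}$ attached to the sampled points $\zb$, so it suffices to bound the number of \emph{distinct} such vectors by $s^{D'}d^{O(D'')}$. Two mechanisms produce the two factors, and one of them needs an extra observation to keep the estimate free of $k$.

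The factor $s^{D'}$ comes from general position. After passing to the infinitesimally perturbed families $\mathcal P_i^{\star}$, Lemma~\ref{lem:gen-pos1-with-parameters} shows $\bigcup_i\mathcal P_i^{\star}$ is in $D'$-general position with respect to every $\mathfrak{S}_{\mathbf{k}}$-stable semi-algebraic set; hence, by the usual limit argument (Propositions~13.1 and~13.7 in \cite{BPRbook3}, as in the proof of correctness), every $\sigma\in\SIGN(\mathcal P)$ already occurs as a limit of a sign vector detected on one of the algebraic sets $\ZZ(\{Q_{i_1},\dots,Q_{i_j}\},\R^k)$ with $j\le D'$ and $Q_{i_\ell}$ chosen from $\mathcal P_{i_\ell}^{\star}$. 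The number of such subsets is $\sum_{j\le D'}\binom{s}{j}\,2^{O(j)}=s^{D'}\,2^{O(D'')}$, the last factor being absorbed into $d^{O(D'')}$.

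The factor $d^{O(D'')}$ comes from the half-degree principle together with a classical component count. For a fixed subset as above one forms the sum-of-squares polynomials $Q_1,Q_2$, of degree $\le 2d$ and symmetric in each block; by Corollary~\ref{cor:half-degree} every bounded semi-algebraically connected component of $\ZZ(Q_i,\R^k)$ meets $A^{\boldsymbol{\ell}}_{\mathbf{k}}$ for some $\boldsymbol{\ell}$ with $\ell_i\le\min(k_i,4d)$, and after applying a suitable permutation of the variables (which alters no sign, since each $P\in\mathcal P$ is symmetric in every block) we may assume the component meets $L_{\boldpi}$ for a partition $\boldpi$ of this type. On $L_{\boldpi}$ the polynomial $Q_{i,\boldpi}$ has degree $\le 2d$ in only $\length(\boldpi)=|\boldsymbol{\ell}|\le D''$ variables, so by the Oleinik--Petrovsky--Milnor--Thom bound --- the very estimate controlling the output size of Algorithm~12.64 in \cite{BPRbook3} --- its real zero set has at most $d^{O(\length(\boldpi))}\le d^{O(D'')}$ semi-algebraically connected components, which bounds the number of points the algebraic sampling returns per stratum.

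The main obstacle is to assemble these per-stratum counts without the number of partitions $\boldpi$ of the $k_i$ silently reintroducing a factor polynomial in $k$; resolving this is what makes Proposition~\ref{prop:SIGN} a genuinely $k$-free statement rather than merely an algorithmic one. I would handle it by reformulating $\SIGN(\mathcal P)$ intrinsically: since each $P\in\mathcal P$ is symmetric of degree $\le d$ in each block, Newton's identities give $P=\tilde P\circ\psi$ with $\deg\tilde P\le d$, where $\psi$ records the first $\min(k_i,d)$ Newton power sums of $\X^{(i)}$ for each $i$, so that $\psi$ maps $\R^k$ to $\R^{D'}$ and $\SIGN(\mathcal P)$ is exactly the set of cells of the arrangement of $\tilde P_1,\dots,\tilde P_s$ meeting the image $\psi(\R^k)$. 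Because the half-degree principle (Proposition~\ref{prop:half-degree} and the results it rests on in \cite{BC2013}) bounds the semi-algebraic complexity of $\psi(\R^k)$ purely in terms of $d$, this reduces the count to an arrangement of $s$ surfaces of degree $\le d$ in the fixed-dimensional space $\R^{D'}$, giving $s^{D'}d^{O(D'')}$ independently of $k$; the general-position reduction, the sum-of-squares device, and the component count are then routine adaptations of their non-equivariant analogues in \cite{BPRbook3}. The final assertion follows since $d\le k_i$ for all $i$ forces $D'=\omega d$ and $D''\le 4\omega d$, both bounded in terms of $d$ and $\omega$ alone.
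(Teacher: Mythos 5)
Your proposal is correct in substance, but the step that actually delivers the $k$-free bound is different from the paper's. The paper proves Proposition \ref{prop:SIGN} simply by pointing to the complexity analysis of Algorithm \ref{alg:sampling}: every realizable sign condition is attained at one of the sampled points, and the stated count of real univariate representations, $\sum_{j\le D'}2^{j}\binom{s}{j}d^{O(D'')}=s^{D'}d^{O(D'')}$, each of degree $d^{O(D'')}$, bounds $\card(\SIGN(\mathcal{P}))$. Your first two paragraphs retrace exactly this (general position via Lemma \ref{lem:gen-pos1-with-parameters} for the $s^{D'}$ factor, Corollary \ref{cor:half-degree} plus the per-stratum Oleinik--Petrovsky--Milnor--Thom count for the $d^{O(D'')}$ factor), but you then replace the paper's final assembly by a direct argument: writing each $P\in\mathcal{P}$ as $\tilde P\circ\psi$ in the first $\min(k_i,d)$ power sums of each block, so that $\SIGN(\mathcal{P})$ is identified with the set of sign conditions of $s$ polynomials of degree $d^{O(1)}$ realized on a subset of $\R^{D'}$, hence has cardinality at most $s^{D'}d^{O(D')}$. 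This is a genuinely different, and arguably cleaner, route to the uniformity in $k$: the paper's displayed total is really a count per choice of polynomials and per partition $\boldpi$, and the enumeration over all $\boldpi\in\boldPi_{\kk}$ with $\length(\boldpi)\le D''$ (a quantity growing polynomially with $k$) does not appear in it, so your worry about the partition factor is well taken and your reduction disposes of it outright; what the paper's route buys instead is that the bound comes for free from the algorithm whose correctness has already been argued. One caveat: you should not invoke the half-degree principle to ``bound the semi-algebraic complexity of $\psi(\R^k)$'' --- that claim is neither established in the paper nor needed (indeed the description of the image of $\psi$ does depend on $k$). All you need is the trivial monotonicity that sign conditions of $\tilde{\mathcal P}$ realized on the subset $\psi(\R^k)\subset\R^{D'}$ form a subset of those realized on all of $\R^{D'}$, after which the classical bound in $D'$ variables gives $s^{D'}d^{O(D')}\le s^{D'}d^{O(D'')}$, and the ``in particular'' statement follows as you say.
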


\begin{notation}
  Given $P \in \R [X_{1}, \ldots,X_{k} ]$, we denote
  \begin{eqnarray*}
    \RR (P=0,S) & = & \{x \in S \hspace{0.75em} \mid \hspace{0.75em} P(x)=0\}
   ,\\
    \RR (P>0,S) & = & \{x \in S \hspace{0.75em} \mid \hspace{0.75em} P(x)>0\}
   ,\\
    \RR (P>0,S) & = & \{x \in S \hspace{0.75em} \mid \hspace{0.75em} P(x)<0\}
   ,
  \end{eqnarray*}
  and $\chiep^{\gen} (P=0,S),
  \chiep^{\gen} (P>0,S),
  \chiep^{\gen} (P<0,S)$ the Euler-Poincar{\'e}
  characteristics of the corresponding sets. The Euler-Poincar{\'e}-query of
  $P$ for $S$ is
  \[ \EQ (P,S) = \chiep^{\gen} (P>0,S) -
     \chiep^{\gen} (P<0,S). \]
  If $P$ and $S$ are symmetric we denote by
  \[\chiep^{\gen}_{\mathfrak{S}_{k}} (P=0,S),
  \chiep^{\gen}_{\mathfrak{S}_{k}} (P>0,S),
  \chiep^{\gen}_{\mathfrak{S}_{k}} (P<0,S)
  \] 
  the
  Euler-Poincar{\'e} characteristics of the corresponding sets. The
  equivariant Euler-Poincar{\'e}-query of $P$ for $S$ is
  \[ \EQ_{\mathfrak{S}_{k}} (P,S) =
     \chiep^{\gen}_{\mathfrak{S}_{k}} (P>0,S) -
     \chiep^{\gen}_{\mathfrak{S}_{k}} (P<0,S). \]
  Let $\mathcal{P} =P_{1}, \ldots,P_{s}$ be a finite list of polynomials in
  $\R [X_{1}, \ldots,X_{k} ]$.
  
  \label{13:def:realization signcondition2}Let $\sigma$ be a sign condition on
  $\mathcal{P}$. The \emph{realization of the sign condition $\sigma$ over
  $S$} is defined by
  \[ \RR (\sigma,S) = \{x \in S  \mid 
     \bigwedge_{P \in \mathcal{P}} \sign (P(x))= \sigma (P)\}, \]
  and its generalized Euler-Poincar{\'e} characteristic is denoted
  \[
  \chiep^{\gen} (\sigma,S).
  \] 
  Similarly, if $P$
  and $S$ are symmetric with respect to $\mathfrak{S}_{\mathbf{k}}$ for some
  $\mathbf{k}= (k_{1}, \ldots,k_{\omega}) \in \Z_{>0}^{\omega}$,
  the equivariant Euler-Poincar{\'e} characteristic of $\RR (\sigma,S)$ is
  denoted
  \[\chiep^{\gen}_{\mathfrak{S}_{\mathbf{k}}} (
  \sigma,S) :=
  \chi^{\gen}_{\mathfrak{S}_{\mathbf{k}}} \left(
  \phi_{\mathbf{k}} \left(\RR (\sigma,S) \right),\mathbb{Q} \right).
  \]
\end{notation}

\begin{notation}
  \label{13:not:chichibar}\label{13:not:signdet}Given a finite family
  $\mathcal{P} \subset \R [ X_{1}, \ldots,X_{k} ]$ we denote by
  $\chiep^{\gen} (\mathcal{P})$ the list of
  generalized Euler-Poincar{\'e}
  characteristics \[
  \chiep^{\gen} (\sigma) =
  \chiep^{\gen} (\RR (\sigma, \R^{k}))
  \] 
  for $\sigma \in \SIGN (\mathcal{P})$. 
\end{notation}

Given $\alpha \in \{0,1,2\}^{\mathcal{P}}$ and $\sigma \in \{0,1, - 1\}^{\mathcal{P}}$, we denote  
\[
\sigma^{\alpha} = \prod_{P \in \mathcal{P}} \sigma (P)^{\alpha (P)},
\] 
and
\[
\mathcal{P}^{\alpha}= \prod_{P \in \mathcal{P}} P^{\alpha (P)}.
\]

When $\RR (\sigma,Z) \ne
\emptyset$, the sign of $\mathcal{P}^{\alpha}$ is fixed on $\RR (\sigma,Z)$
and is equal to $\sigma^{\alpha}$ with the understanding that $0^{0} =1$.

We order the elements of $\mathcal{P}$ so that $\mathcal{P} = \{P_{1}, \ldots
,P_{s} \}$. We order $\{0,1,2\}^{\mathcal{P}}$ lexicographically. We also
order $\{0,1, - 1\}^{\mathcal{P}}$ lexicographically (with $0 \prec 1 \prec
-1$).

Given $A= \alpha_{1}, \ldots, \alpha_{m}$, a list of elements of
$\{0,1,2\}^{\mathcal{P}}$ with $\alpha_{1} <_{\lex} \ldots <_{\lex}
\alpha_{m}$, we define
\begin{eqnarray*}
  \mathcal{P}^{A} & =  & \mathcal{P}^{\alpha_{1}}, \ldots,
  \mathcal{P}^{\alpha_{m}},\\
   \EuQ (\mathcal{P}^{A},S) & = &
  { \EuQ (\mathcal{P}^{\alpha_{1}},S),
  \ldots, \EuQ (\mathcal{P}^{\alpha_{m}},S).}
\end{eqnarray*}
Given $\Sigma = \sigma_{1}, \ldots, \sigma_{n}$, a list of elements of
$\{0,1, - 1\}^{\mathcal{P}}$, with $\sigma_{1} <_{\lex} \ldots <_{\lex}
\sigma_{n}$,we define
\begin{eqnarray*}
  \RR (\Sigma,S) & = & \RR (\sigma_{1},Z), \ldots, \RR (\sigma_{n},Z)
 ,\\
  \chi^{\gen} (\Sigma,S) & = &
  \chi^{\gen} (\sigma_{1},Z), \ldots,
  \chi^{\gen} (\sigma_{n},Z).
\end{eqnarray*}

We denote by $\ensuremath{\operatorname{Mat}} (A, \Sigma)$ the $m \times s$
matrix of signs of $\mathcal{P}^{A}$ on $\Sigma$ defined by
\begin{eqnarray*}
  \ensuremath{\operatorname{Mat}} (A, \Sigma)_{i,j} & = &
  \sigma_{j}^{\alpha_{i}}.
\end{eqnarray*}
\begin{proposition}
  \label{13:prop:matrix of signs}If $\cup_{\sigma \in \Sigma} \RR (\sigma,S)
  =S$, then
  \[ \ensuremath{\operatorname{Mat}} (A, \Sigma) \cdot
     \chiep^{\gen} (\Sigma,S) = \EQ (
     \mathcal{P}^{A},S). \]
\end{proposition}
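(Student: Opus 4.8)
The plan is to verify the matrix identity entrywise and then observe that it is just the multiplicativity of the Euler-Poincar\'e query over the decomposition of $S$ into sign-condition strata. Fix indices $i$ and recall that the $i$-th row of $\ensuremath{\operatorname{Mat}}(A,\Sigma)\cdot\chiep^{\gen}(\Sigma,S)$ is $\sum_{j=1}^{n}\sigma_j^{\alpha_i}\,\chiep^{\gen}(\sigma_j,S)$, while the $i$-th entry of $\EQ(\mathcal{P}^A,S)$ is $\EuQ(\mathcal{P}^{\alpha_i},S)$. So it suffices to prove, for each fixed $\alpha\in\{0,1,2\}^{\mathcal{P}}$,
\begin{eqnarray*}
\EuQ(\mathcal{P}^{\alpha},S) & = & \sum_{\sigma\in\Sigma}\sigma^{\alpha}\,\chiep^{\gen}(\sigma,S).
\end{eqnarray*}

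The key step is the following. Since $\bigcup_{\sigma\in\Sigma}\RR(\sigma,S)=S$ and distinct realizable sign conditions have disjoint realizations, the sets $\RR(\sigma,S)$, $\sigma\in\Sigma$, form a finite partition of $S$ into semi-algebraic pieces. On each such piece the polynomial $\mathcal{P}^{\alpha}=\prod_{P}P^{\alpha(P)}$ has constant sign equal to $\sigma^{\alpha}$ (with the convention $0^0=1$ already noted in the excerpt): if $\sigma^{\alpha}=0$ the piece lies in $\RR(\mathcal{P}^{\alpha}=0,S)$, if $\sigma^{\alpha}=1$ it lies in $\RR(\mathcal{P}^{\alpha}>0,S)$, and if $\sigma^{\alpha}=-1$ it lies in $\RR(\mathcal{P}^{\alpha}<0,S)$. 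Hence
\begin{eqnarray*}
\RR(\mathcal{P}^{\alpha}>0,S) & = & \bigsqcup_{\sigma\in\Sigma,\ \sigma^{\alpha}=1}\RR(\sigma,S),\\
\RR(\mathcal{P}^{\alpha}<0,S) & = & \bigsqcup_{\sigma\in\Sigma,\ \sigma^{\alpha}=-1}\RR(\sigma,S).
\end{eqnarray*}
Now apply additivity of $\chiep^{\gen}(\cdot)$ (Definition \ref{def:ep-general}, property (3), which for a finite disjoint union of semi-algebraic sets gives that $\chiep^{\gen}$ of the union is the sum of the $\chiep^{\gen}$'s) to each of these two disjoint unions, and subtract:
\begin{eqnarray*}
\EuQ(\mathcal{P}^{\alpha},S) & = & \chiep^{\gen}(\mathcal{P}^{\alpha}>0,S)-\chiep^{\gen}(\mathcal{P}^{\alpha}<0,S)\\
& = & \sum_{\sigma\in\Sigma,\ \sigma^{\alpha}=1}\chiep^{\gen}(\sigma,S)-\sum_{\sigma\in\Sigma,\ \sigma^{\alpha}=-1}\chiep^{\gen}(\sigma,S)\\
& = & \sum_{\sigma\in\Sigma}\sigma^{\alpha}\,\chiep^{\gen}(\sigma,S),
\end{eqnarray*}
where the last equality holds because terms with $\sigma^{\alpha}=0$ contribute nothing. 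This is exactly the $i$-th coordinate of the claimed matrix equation, so the proposition follows.

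The only genuinely delicate point is the justification that $\chiep^{\gen}(\cdot)$ is finitely additive over disjoint semi-algebraic unions in the required form; this is not merely the inclusion-exclusion identity of property (3) but its iteration, and one should note that the pieces $\RR(\sigma,S)$ are pairwise disjoint (so the intersection terms in the inclusion-exclusion vanish) and locally closed, so that $\chiep^{\gen}$ is defined on them. This is standard (see the additivity discussion surrounding Proposition \ref{prop:ep-gen-BM} and Corollary \ref{cor:additive}, which is precisely the same kind of statement), and I would simply cite it. Everything else is bookkeeping with the sign conventions $0^0=1$ and the lexicographic orderings, which only serve to make the matrix $\ensuremath{\operatorname{Mat}}(A,\Sigma)$ well-defined and play no role in the argument.
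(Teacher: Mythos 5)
Your proof is correct and is essentially the argument behind the paper's proof, which is simply a citation to \cite[Proposition 13.44]{BPRbook3}: one checks the identity row by row, decomposes $S$ into the pairwise disjoint realizations $\RR(\sigma,S)$, notes that $\mathcal{P}^{\alpha}$ has constant sign $\sigma^{\alpha}$ on each piece, and concludes by finite additivity of $\chi^{\gen}$ (which, as you note, holds for arbitrary semi-algebraic pieces since $\chi^{\gen}$ is defined and additive on all semi-algebraic sets by Definition \ref{def:ep-general}, so the local-closedness worry is not even needed).
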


\begin{proof}See \cite[Proposition 13.44]{BPRbook3}.\end{proof}

We consider a list $A$ of elements in $\{0,1,2\}^{\mathcal{P}}$ \emph{adapted
to sign determination} for $\mathcal{P}$ (cf. \cite[Definition 10.72]{BPRbook3}), 
i.e. such that the matrix of signs of $\mathcal{P}^{A}$ over $\SIGN (
\mathcal{P})$ is invertible. If {$\mathcal{P} =P_{1}, \ldots,P_{s}$},
let~${\mathcal{P}_{i} =P_{i}, \ldots,P_{s}}$, for $0 \le i \le s$. A method
for determining a list $A (\mathcal{P})$ of elements in
$\{0,1,2\}^{\mathcal{P}}$ adapted to sign determination for $\mathcal{P}$ \
from $\SIGN (\mathcal{P})$ is given in Algorithm 10.83 (Adapted Matrix) in \cite{BPRbook3}.

We are ready for describing the algorithm computing the generalized
Euler-Poincar{\'e} characteristic. We start with an algorithm for the
Euler-Poincar{\'e}-query.

{\small
\begin{algorithm}[H]
\caption{(Euler-Poincar{\'e}-query)}
\label{12:alg:speuler}
\begin{algorithmic}[1]
\INPUT
\Statex{
\begin{enumerate}
    \item A tuple $\mathbf{k}= (k_{1}, \ldots,k_{\omega}) \in
    \Z_{>0}^{\omega}$, with $k= \sum_{i=1}^{\omega} k_{i}$.
    
    \item Polynomials $P,Q \in \D [ \X^{(1)}, \ldots
   ,\X^{(\omega)} ]$, where each $\X^{(i)}$ is a
    block of $k_{i}$ variables, and $P,Q$ are symmetric in each block of
    variables $\X^{(i)}$, and of degree at most $d$. 
  \end{enumerate}
}  

 \OUTPUT
 \Statex{ 
  The Euler-Poincar{\'e}-queries
  \begin{eqnarray*}
    \EQ (P,Z) & = & \chiep^{\gen} (P>0,Z) -
    \chiep^{\gen} (P<0,Z),\\
    \EQ_{\mathfrak{S}_{k}} (P,Z) & = &
    \chiep^{\gen}_{\mathfrak{S}_{k}} (P>0,Z) -
    \chiep^{\gen}_{\mathfrak{S}_{k}} (P<0,Z) ,
  \end{eqnarray*}
  where $Z=\ZZ\left(Q, \R^{k} \right)$.
}

\PROCEDURE
  
    \State{ Introduce a new variable $X_{k+1}$, and let
    \begin{eqnarray*}
      Q_{+} & = & Q^{2} + (P-X_{k+1}^{2})^{2},\\
      Q_{-} & = & Q^{2} + (P+X_{k+1}^{2})^{2}.
    \end{eqnarray*}
    } \label{12:alg:speuler:step1}
    
    \State{ Using Algorithm \ref{alg:ep-general-BM} compute
    \[ \chiep^{\tmop{gen}} (\ZZ (Q_{+}, \R^{k+1})),
    \chiep^{\tmop{gen}}_{\mathfrak{S}_{k}} (\ZZ (Q_{+}, \R^{k+1})),
    \] and
    \[\chiep^{\tmop{gen}} (\ZZ (Q_{-}, \R^{k+1})),
    \chiep^{\tmop{gen}}_{\mathfrak{S}_{k}} (\ZZ (Q_{-}, \R^{k+1})).
    \]
    } \label{12:alg:speuler:step2}
    
    \State{Output
    
    \[ (\chiep^{\tmop{gen}} (\ZZ (Q_{+}, \R^{k+1}))- \chiep^{\tmop{gen}} (
       \ZZ (Q_{-}, \R^{k+1}))) /2, \]
    \[ (\chiep^{\tmop{gen}}_{\mathfrak{S}_{k}} (\ZZ (Q_{+}, \R^{k+1}))-
       \chiep^{\tmop{gen}}_{\mathfrak{S}_{k}} (\ZZ (Q_{-}, \R^{k+1}))) /2.
    \]
    }  \label{12:alg:speuler:step3}
  \end{algorithmic}
  \end{algorithm}
}

\vspace{-0.5in}
\paragraph{\sc Proof of correctness}
The algebraic set 
$\ZZ (Q_{+}, \R^{k+1})$ 
is semi-algebraically homeomorphic
to the disjoint union of two copies of the semi-algebraic set defined by
$(P>0) \wedge (Q=0)$, and the algebraic set defined by $(P=0) \wedge (Q=0)$.
Hence, using Corollary \ref{cor:additive}, we have that
\begin{eqnarray*}
  2 \chiep^{\tmop{gen}} (P>0,Z)  &=&  \chiep^{\tmop{gen}} (\ZZ (Q_{+},
  \R^{k+1})) - \chiep^{\tmop{gen}} (\ZZ ((Q,P), \R^{k})),\\
  2 \chiep^{\tmop{gen}}_{\mathfrak{S}_{k}} (P>0,Z)  &=& 
  \chiep^{\tmop{gen}}_{\mathfrak{S}_{k}} (\ZZ (Q_{+}, \R^{k+1})) -
  \chiep^{\tmop{gen}}_{\mathfrak{S}_{k}} (\ZZ ((Q,P), \R^{k})).
\end{eqnarray*}

Similarly, we have that
\begin{eqnarray*}
  2  \chiep^{\tmop{gen}} (P<0,Z) & = & \chiep^{\tmop{gen}} (\ZZ (Q_{-},
  \R^{k+1})) - \chiep^{\tmop{gen}} (\ZZ ((Q,P), \R^{k})),\\
  2  \chiep^{\tmop{gen}}_{\mathfrak{S}_{k}} (P<0,Z) & = &
  \chiep^{\tmop{gen}}_{\mathfrak{S}_{k}} (\ZZ (Q_{-}, \R^{k+1})) -
  \chiep^{\tmop{gen}}_{\mathfrak{S}_{k}} (\ZZ ((Q,P), \R^{k})).
\end{eqnarray*}

\paragraph{\sc Complexity analysis}
The complexity of the algorithm is $(\omega  k d)^{O (D'')}$,
where $D'' = \sum_{i=1}^\omega \min(k_i,4d)$,
using the complexity analysis of Algorithm
\ref{alg:ep-general-BM}.

We are now ready to describe our algorithm for computing the
Euler-Poincar{\'e} characteristic of the realizations of sign conditions.

{\small
\begin{algorithm}[H]
\caption{(Generalized Euler-Poincar{\'e} Characteristic of Sign
Conditions)}
\label{alg:ep-sign-conditions}
\begin{algorithmic}[1]
\INPUT
\Statex{
\begin{enumerate}
    \item A tuple $\mathbf{k}= (k_{1}, \ldots,k_{\omega}) \in
    \Z_{>0}^{\omega}$, with $k= \sum_{i=1}^{\omega} k_{i}$.
    
    \item A set of $s$ polynomials $\mathcal{P}= \{ P_{1}, \ldots,P_{s} \}
    \subset \D [ \X^{(1)}, \ldots,\X^{(\omega)} ]$,
    where each $\X^{(i)}$ is a block of $k_{i}$ variables, and
    each polynomial in $\mathcal{P}$ is symmetric in each block of variables
    $\X^{(i)}$ and of degree at most $d$.
  \end{enumerate}
  }
  \OUTPUT
  \Statex{
   The lists
  $\chiep^{\gen} (\mathcal{P})$,
  $\chiep^{\gen}_{\mathfrak{S}_{\mathbf{k}}} (
  \mathcal{P})$.
  }
  
\PROCEDURE

    \State{$\mathcal{P} \gets \{ P_{1}, \ldots,P_{s} \}$, where 
    $\mathcal{P}_{i} = \{ P_{1}, \ldots,P_{i} \}$. Compute $\SIGN (
    \mathcal{P})$ using Algorithm \ref{alg:sampling} (Sampling).
    } \label{alg:ep-sign-conditions:step1}
    
    \State{Determine a list $A (\mathcal{P})$ adapted to sign
    determination for $\mathcal{P}$ on $Z$ using 
    Algorithm 10.83 (Adapted Matrix) in \cite{BPRbook3}.
    } \label{alg:ep-sign-conditions:step2}
    
    \State{ Define $A=A (\mathcal{P})$, $M=M (\mathcal{P}^{A}, \SIGN
    (\mathcal{P}))$.} \label{alg:ep-sign-conditions:step3}
    
    \State{Compute $\EQ (\mathcal{P}^{A})$,
    $\EQ_{\mathfrak{S}_{\mathbf{k}}} (\mathcal{P}^{A})$ using repeatedly
    Algorithm \ref{12:alg:speuler} (Euler-Poincar{\'e}-query).
    } \label{alg:ep-sign-conditions:step4}
    
    \State{ Using
    \begin{eqnarray*}
      M \cdot \chiep^{\tmop{gen}} (\mathcal{P,\mathbb{Q}}) & = & \EQ (
      \mathcal{P}^{A}),\\
      M \cdot \chiep^{\tmop{gen}}_{\mathfrak{S}_{\mathbf{k}}} (
      \mathcal{P,\mathbb{Q}}) & = & \EQ_{\mathfrak{S}_{\mathbf{k}}} (
      \mathcal{P}^{A}).
    \end{eqnarray*}
    and the fact that $M$ is invertible, compute $\chiep^{\tmop{gen}} (
    \mathcal{P}$, $\chiep^{\tmop{gen}}_{\mathfrak{S}_{\mathbf{k}}} (
    \mathcal{P})$.
    } \label{alg:ep-sign-conditions:step5}
 \end{algorithmic}
 \end{algorithm}
}

\paragraph{\sc Proof of correctness}
The correctness follows from the correctness of Algorithm \ref{alg:sampling}
and the proof of correctness of the corresponding algorithm 
(Algorithm 13.12) in \cite{BPRbook3}.

\paragraph{\sc Complexity analysis}
The complexity analysis is very similar to that of 
Algorithm 13.12 in \cite{BPRbook3}.
The only difference is the use of the bound on $\card
(\mathcal{P})$ afforded by Proposition \ref{prop:SIGN} in the symmetric
situation instead of the usual non-symmetric bound. By Proposition
\ref{prop:SIGN}

\[ \card (\SIGN (\mathcal{P})) \leq s^{D'}  d^{O (
   D'')}, 
\]
where $D'  = \sum_{i=1}^{\omega} \min (k_{i},d)$,
and $D''= \sum_{i=1}^{\omega} \min (k_{i},4d)$. The
number of calls to to Algorithm \ref{12:alg:speuler}
(Euler-Poincar{\'e}-query) is equal to~$\card   (\SIGN (\mathcal{P}
))$. The calls to Algorithm \ref{12:alg:speuler} (Euler-Poincar{\'e}-query)
are done for polynomials which are products of at most
\[ \log (\card (\SIGN (\mathcal{P}))) =O (D''
   \log  d+ D' \log  s)) 
\]
products of polynomials of the form $P$ or $P^{2}$, $P \in \mathcal{P}$ by
Proposition 10.84 in {\cite{BPRbook3}}, 
hence of degree bounded by $D=O (d
(D'' \log  d+ D' \log  s))$. Using the complexity analysis of
Algorithm \ref{alg:sampling} (Sampling) and the complexity analysis of
Algorithm \ref{12:alg:speuler} (Euler-Poincar{\'e}-query), the number of
arithmetic operations is bounded by 

\[ s^{D'} k^{d } d^{O (D'D'')} + s^{D'} d^{O(D'')} ( k \omega D)^{O (D''')}, \]
where 
$D=d  (D'' \log  d+ D'\log  s))$,
$D' = \sum_{i=1}^{\omega} \min (k_{i},d)$,
$D'' = \sum_{i=1}^{\omega} \min (k_{i},d)$, and
$D''' = \sum_{i=1}^\omega \min(k_i, 2D)$.

The algorithm also involves the inversion matrices of size $s^{D'} d^{O (D'')}$ with integer
coefficients.

{\small
\begin{algorithm}[H]
\caption{(Computing generalized Euler-Poincar{\'e}
characteristic of symmetric semi-algebraic sets)}
\label{alg:ep-sa}
\begin{algorithmic}[1]
\INPUT
\Statex{
\begin{enumerate}
    \item A tuple $\mathbf{k}= (k_{1}, \ldots,k_{\omega}) \in
    \Z_{>0}^{\omega}$, with $k= \sum_{i=1}^{\omega} k_{i}$.
    
    \item A set of $s$ polynomials $\mathcal{P}= \{ P_{1}, \ldots,P_{s} \}
    \subset \D [ \X^{(1)}, \ldots,\X^{(\omega)} ]$,
    where each $\X^{(i)}$ is a block of $k_{i}$ variables, and
    each polynomial in $\mathcal{P}$ is symmetric in each block of variables
    $\X^{(i)}$, and of degree at most $d$.
    
    \item A $\mathcal{P}$-semi-algebraic set $S$, described by
    \begin{eqnarray*}
      S & = & \bigcup_{\sigma \in \Sigma} \RR \left(\sigma, \R^{k} \right),
    \end{eqnarray*}
    where $\Sigma \subset \{ 0,1,-1 \}^{\mathcal{P}}$ of sign conditions on
    $\mathcal{P}$.
\end{enumerate}
}
\OUTPUT
\Statex{  $\chi^{\gen} (S
 )$ and $\chi^{\gen}_{\mathfrak{S}_{\mathbf{k}}} (
  S)$.
  }

\PROCEDURE
    \State{Compute using Algorithm \ref{alg:sampling} the set
    $\tmop{SIGN} (\mathcal{P})$.} \label{alg:ep-sa:step1}
    
    \State{Identify $\Gamma = \tmop{SIGN} (\mathcal{P}) \cap \Sigma$.} \label{alg:ep-sa:step2}
    
    \State{Compute using Algorithm \ref{alg:ep-general-BM},
    $\chi^{\tmop{gen}} (\mathcal{P})$,
    $\chi^{\tmop{gen}}_{\mathfrak{S}_{\mathbf{k}}} (\mathcal{P})$.} \label{alg:ep-sa:step3}
    
    \State{Compute using $\chi^{\tmop{gen}} (\mathcal{P}),
    \chi^{\tmop{gen}}_{\mathfrak{S}_{k}} (\mathcal{P})$, and $\Gamma,$
    \begin{eqnarray*}
      \chi^{\tmop{gen}} (S) & = & \sum_{\sigma \in \Sigma} \chi^{\tmop{gen}}
      (\sigma),\\
      \chi^{\tmop{gen}}_{\mathfrak{S}_{\mathbf{k}}} (S) & = & \sum_{\sigma
      \in \Sigma} \chi^{\tmop{gen}}_{\mathfrak{S}_{\mathbf{k}}} (\sigma).
    \end{eqnarray*}
    } \label{alg:ep-sa:step4}
  
\end{algorithmic}
\end{algorithm}
}

\paragraph{\sc Proof of correctness}
The correctness of Algorithm \ref{alg:ep-sa} follows from the correctness of
Algorithms \ref{alg:sampling} and \ref{alg:ep-sign-conditions}, and the
additive property of the generalized Euler-Poincar{\'e} characteristic (see
Definition \ref{def:ep-general}). 

\paragraph{\sc Complexity analysis}
The complexity is dominated by Step \ref{alg:ep-sa:step3}, and is thus bounded by
\[ \card (
\Sigma)^{O (1)} + s^{D'} k^{d } d^{O (D'D'')} + s^{D'} d^{O(D'')} ( k \omega D)^{O (D''')}, \]
where 
$D=d  (D'' \log  d+ D'\log  s))$,
$D' = \sum_{i=1}^{\omega} \min (k_{i},d)$,
$D'' = \sum_{i=1}^{\omega} \min (k_{i},d)$, and
$D''' = \sum_{i=1}^\omega \min(k_i, 2D)$.

The algorithm also involves the inversion matrices of size $s^{D'} d^{O (D'')}$ with integer
coefficients.

\begin{proof}[Proof of Theorem \ref{thm:algorithm-sa}] The
correctness and the complexity analysis of Algorithm \ref{alg:ep-sa}
prove Theorem \ref{thm:algorithm-sa}.
\end{proof}

\section{Appendix}
\label{sec:appendix}

\begin{notation}
\label{not:pi-of-x}
For $\x \in \R^k$ or $\C^k$, let $G_\x$ be the isotropy subgroup of $\x$ with respect to the action of
$\mathfrak{S}_k$ on $\R^k$ or $\C^k$ permuting coordinates. Then, it is easy to verify that 
\[
G_\x \cong \mathfrak{S}_{\ell_1} \times \cdots \times \mathfrak{S}_{\ell_m},
\]
where $k \geq \ell_1 \geq \ell_2 \geq \cdots \geq \ell_m > 0, \sum_i \ell_i = k$,
and $\ell_1,\ldots,\ell_m$  are the  
cardinalities of the sets 
\[
\{i \mid 1 \leq i \leq k, x_i = x\}, x \in \bigcup_{i=1}^k \{x_i \}
\] 
in non-decreasing order.
We denote by $\pi(\x)$ the partition $(\ell_1,\ldots,\ell_m) \in \Pi_k$.

More generally, for $\mathbf{k}= (k_{1}, \ldots,k_{\omega})
  \in \Z_{>0}^{\omega}$, with $k= \sum_{i=1}^{\omega} k_{i}$, and 
  $\x = (\x^{(1)},\ldots,\x^{(\omega)}) \in \R^k$, where each $\x^{(i)} \in \R^{k_i}$, we denote 
  \[
  \boldpi(\x) = (\pi(\x^{(1)}),\ldots,\pi(\x^{(\omega)})) \in \boldPi_\kk.
  \]
\end{notation}

In the following proposition we use Notation \ref{not:L-fixed}.
\begin{proposition}\cite[Proposition 7]{BC2013}
  \label{prop:orthogonal} Let $L'_{\fixed}
  \subset L_{\fixed}$ any subspace of
  $L_{\fixed}$, and $I  \subset \{ (i,j) \mid 1
  \leq i \leq \omega,1 \leq j \leq \ell_{i} \}$. Then the following hold.
  \begin{enumerate}[A.]
    \item
    \label{itemlabel:prop:orthogonal:1}
     The dimension of $L_{\fixed}$ is equal to $ 
    \sum_{i=1}^{\omega} \ell_{i} -1= \length (\boldpi) -1$.
    \item 
    \label{itemlabel:prop:orthogonal:2}
    The product over $i \in [ 1, \omega ]$ of the subgroups
    $\mathfrak{S}_{\pi^{(i)}_{1}} \times \mathfrak{S}_{\pi^{(i)}_{2}}
    \times \cdots \times \mathfrak{S}_{\pi^{(i)}_{\ell_{i}}}$ acts trivially
    on $L_{\fixed}$.
    
    \item
    \label{itemlabel:prop:orthogonal:3}
     For each $i,j,1 \leq i \leq \omega,1 \leq j \leq \ell_{i}$, $M^{(i
   )}_{j}$ is an irreducible representation of $\mathfrak{S}_{\pi^{(i
   )}_{j}}$, and the action of $\mathfrak{S}_{\pi^{(i')}_{j'}}$ on $M^{(
    i)}_{j}$ is trivial if $(i,j) \neq (i',j')$.
    
    \item 
    \label{itemlabel:prop:orthogonal:4}
    There is a direct decomposition $L=L_{\fixed} \oplus \left(
    \bigoplus_{1 \leq i \leq \omega,1 \leq j \leq \ell_{i}} M^{(i)}_{j}
    \right)$.
    
    \item 
    \label{itemlabel:prop:orthogonal:5}
    Let $\mathbf{D}$ denote the unit disc in the subspace
    $L_{\fixed}' \oplus \left(\bigoplus_{(i,j) \in I} M^{(i)}_{j}
    \right)$. Then, the space of orbits of the pair $(\mathbf{D}, \partial
    \mathbf{D})$ under the action of $\mathfrak{S}_{\mathbf{k}}$ is homotopy
    equivalent to $(\ast, \ast)$ if $I \neq \emptyset$. Otherwise, the
    space of orbits of the pair $(\mathbf{D}, \partial \mathbf{D})$ under
    the action of $\mathfrak{S}_{\mathbf{k}}$ is homeomorphic to $(
    \mathbf{D}, \partial \mathbf{D})$.
  \end{enumerate}
\end{proposition}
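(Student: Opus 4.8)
The plan is to establish parts \ref{itemlabel:prop:orthogonal:1}--\ref{itemlabel:prop:orthogonal:4} by elementary linear algebra together with the irreducibility of the standard representation of a symmetric group, and then to deduce part \ref{itemlabel:prop:orthogonal:5} from them by a join argument.

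For \ref{itemlabel:prop:orthogonal:1}, I would note that $L_{\boldpi}$ consists of the vectors that are constant on each of the $\length(\boldpi)$ parts of $\boldpi$, so $\dim L_{\boldpi}=\length(\boldpi)$; since the all-ones vector lies in $L_{\boldpi}$ but not in $L$, the hyperplane $L$ meets $L_{\boldpi}$ properly and $\dim L_{\fixed}=\length(\boldpi)-1$. Part \ref{itemlabel:prop:orthogonal:2} is immediate, since each factor $\mathfrak{S}_{\pi^{(i)}_j}$ only permutes coordinates inside the corresponding part, on which every vector of $L_{\fixed}$ is constant. For \ref{itemlabel:prop:orthogonal:3}, the key point is that $L^{\perp}=\R\cdot\mathbf 1\subset L_{\pi^{(i)}_j}$, so the orthogonal complement of $L^{(i)}_j=L\cap L_{\pi^{(i)}_j}$ inside $L$ coincides with the orthogonal complement of $L_{\pi^{(i)}_j}$ inside $\R^k$, namely the space of vectors supported on the $j$-th part of $\pi^{(i)}$ with zero coordinate sum. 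This is the standard representation of $\mathfrak{S}_{\pi^{(i)}_j}$, which is irreducible over $\Q$ and hence over $\R$, and on it any other factor $\mathfrak{S}_{\pi^{(i')}_{j'}}$ permutes coordinates that vanish identically, so acts trivially. Part \ref{itemlabel:prop:orthogonal:4} then follows from a dimension count: the $M^{(i)}_j$ have pairwise disjoint supports, hence are mutually orthogonal and orthogonal to $L_{\fixed}\subseteq L^{(i)}_j$, while $\dim L_{\fixed}+\sum_{i,j}\dim M^{(i)}_j=(\length(\boldpi)-1)+\sum_{i,j}(\pi^{(i)}_j-1)=k-1=\dim L$.

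For \ref{itemlabel:prop:orthogonal:5}, which is the heart of the statement, I would argue as follows. Write $V=L'_{\fixed}\oplus\bigoplus_{(i,j)\in I}M^{(i)}_j$, so $\mathbf D$ is the unit ball of $V$. The action on $\mathbf D$ seen by the $\mathfrak{S}_{\mathbf k}$-orbit space is that of the Young subgroup $\prod_{i,j}\mathfrak{S}_{\pi^{(i)}_j}$ (the local structure of the orbit space near the relevant stratum being governed by the isotropy group), and by \ref{itemlabel:prop:orthogonal:2} and \ref{itemlabel:prop:orthogonal:3} the factors indexed by $(i,j)\notin I$ act trivially on $V$; so it suffices to form orbits under $G'=\prod_{(i,j)\in I}\mathfrak{S}_{\pi^{(i)}_j}$, acting trivially on $L'_{\fixed}$ and through its standard-representation factor on each $M^{(i)}_j$. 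If $I=\emptyset$ this action is trivial and the pair is unchanged, giving the last assertion. If $I\neq\emptyset$, write $\partial\mathbf D$ as the iterated join of the unit sphere of $L'_{\fixed}$ with the unit spheres $S(M^{(i)}_j)$, $(i,j)\in I$; since $G'$ respects this decomposition factorwise and passing to orbits commutes with joins, $\partial\mathbf D/G'$ is the iterated join of the unit sphere of $L'_{\fixed}$ with the spaces $S(M^{(i)}_j)/\mathfrak{S}_{\pi^{(i)}_j}$. Each of the latter is, after sorting coordinates, a closed spherical simplex (a sphere intersected with a pointed simplicial cone), hence contractible; and a join with a contractible factor is contractible, so $\partial\mathbf D/G'$ is contractible. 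Finally, since radial scaling commutes with the linear $G'$-action, $\mathbf D/G'$ is the cone on $\partial\mathbf D/G'$, hence also contractible, so $(\mathbf D/G',\partial\mathbf D/G')\simeq(\ast,\ast)$.

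The step I expect to be the main obstacle is the group reduction in \ref{itemlabel:prop:orthogonal:5}: making precise that for the purpose of the orbit space one may replace $\mathfrak{S}_{\mathbf k}$ first by the isotropy (Young) subgroup and then, using \ref{itemlabel:prop:orthogonal:2}--\ref{itemlabel:prop:orthogonal:3}, by $G'$, with no extra identifications entering (in particular, permutations exchanging equal-size parts of $\boldpi$ must be shown not to matter), together with the two supporting facts that orbit-taking commutes with the join decomposition and that the quotient of the standard-representation sphere by its symmetric group is a contractible spherical simplex. The degenerate case in which every part indexed by $I$ has size one --- so $\bigoplus_{(i,j)\in I}M^{(i)}_j=0$, $G'$ is trivial, and the conclusion reverts to the $I=\emptyset$ case --- is routine bookkeeping.
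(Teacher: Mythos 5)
This proposition is quoted from \cite[Proposition 7]{BC2013}; the present paper gives no proof of it, so there is nothing internal to compare your argument against, and I evaluate it on its own merits. Your argument is correct and essentially complete in outline. Parts \ref{itemlabel:prop:orthogonal:1}--\ref{itemlabel:prop:orthogonal:4} are exactly as you say: the identification of $M^{(i)}_{j}$ with the orthogonal complement of $L_{\pi^{(i)}_{j}}$ in $\R^{k}$ (valid because $\R\cdot\mathbf{1}\subset L_{\pi^{(i)}_{j}}$), i.e.\ with the standard representation of $\mathfrak{S}_{\pi^{(i)}_{j}}$ supported on the $j$-th part, together with the dimension count, gives \ref{itemlabel:prop:orthogonal:3} and \ref{itemlabel:prop:orthogonal:4}. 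For \ref{itemlabel:prop:orthogonal:5}, your chain --- reduce to the Young subgroup, discard the factors acting trivially, decompose $\partial\mathbf{D}$ as a join of unit spheres of the orthogonal summands, use that quotienting by a product group acting factorwise commutes with the join, identify each $S(M^{(i)}_{j})/\mathfrak{S}_{\pi^{(i)}_{j}}$ with a closed Weyl-chamber (spherical simplex) cross-section, hence contractible, conclude contractibility of the join and take the cone for $\mathbf{D}$ --- is sound; all the supporting facts you flag (quotient of a join, contractibility of a sphere intersected with a pointed chamber) are standard and hold here.

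Two points deserve to be made explicit rather than left as caveats. First, your interpretation of ``the space of orbits under $\mathfrak{S}_{\mathbf{k}}$'' as the quotient by the isotropy (Young) subgroup is not merely convenient but forced: under a literal global reading even the $I=\emptyset$ case fails (e.g.\ $\boldpi=((1,1))$, $k=2$, where the full $\mathfrak{S}_{2}$-quotient of the interval $\mathbf{D}\subset L$ is a half-interval, not homeomorphic to $(\mathbf{D},\partial\mathbf{D})$); the statement is meant locally, as it is applied in Lemma \ref{lem:equivariant_morseB}, where the disc sits near a point whose coordinate values distinguish the parts, so permutations exchanging equal-size parts move the disc off itself and contribute no identifications. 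Second, your ``degenerate case'' is in fact a genuine (harmless) imprecision of the statement: if every $(i,j)\in I$ has $\pi^{(i)}_{j}=1$ then all the corresponding $M^{(i)}_{j}$ vanish and the conclusion for $I\neq\emptyset$ as literally stated is false when $L'_{\fixed}\neq 0$; the claim requires at least one $M^{(i)}_{j}\neq 0$ with $(i,j)\in I$, which is exactly the situation (a nontrivial action on $L^{-}(\x)$) in which it is invoked in case (c) of the proof of Lemma \ref{lem:equivariant_morseB}. With those two clarifications written out, your proof is complete.
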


\begin{lemma}\cite[Lemma 5]{BC2013}
  \label{lem:equivariant_morseA} Then, for $1 \leq i<N$, and for each $c \in
  [ c_{i},c_{i+1})$, $\phi_{\mathbf{k}} (S_{\leq c})$ is
  semi-algebraically homotopy equivalent to $\phi_{\mathbf{k}} (S_{\leq
  c_{i}})$.
\end{lemma}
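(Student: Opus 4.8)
The plan is to establish an $\mathfrak{S}_{\mathbf{k}}$-equivariant semi-algebraic deformation retraction of $S_{\leq c}$ onto $S_{\leq c_{i}}$ and then to push it down to the orbit space. I use the running hypotheses of this section: $S$ is the bounded symmetric set $\{P \leq 0\}$, $W = \ZZ(P,\R^{k})$ is nonsingular and bounded, $F = p_{1}^{(k)}$ restricted to $W$ has finitely many nondegenerate critical points with critical values $c_{1} < \cdots < c_{N}$, and $S_{\leq c} = S \cap F^{-1}((-\infty, c])$. Throughout the construction I may assume $\R = \mathbb{R}$, since the conclusion asserts the existence of a semi-algebraic homotopy equivalence and is therefore preserved under the Tarski-Seidenberg transfer principle; the general real closed case is recovered at the end.

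Since $F$ is linear it has no critical point in the open set $\{P < 0\}$, so every critical point of $F$ on $S$ lies on $W$; for $c \in [c_{i},c_{i+1})$ none of the associated critical values lies in the half-open interval $(c_{i}, c]$, so $F|_{W}$ has no critical point on the compact band $B := S \cap F^{-1}([c_{i}, c])$. On a neighbourhood of $B$ I would build a semi-algebraic vector field $v$ equal to $-\nabla F$ on $\{P < 0\}$ and to the orthogonal projection of $-\nabla F$ onto the tangent spaces of the level hypersurfaces of $P$ near $W$, glued by a partition of unity; equivalently, one may first build $v$ nonequivariantly exactly as in the Morse deformation argument of \cite[Chapters 12 and 13]{BPRbook3} and then replace it by its average over $\mathfrak{S}_{\mathbf{k}}$. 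Because the standard inner product on $\R^{k}$ is $\mathfrak{S}_{\mathbf{k}}$-invariant and $F, P$ are $\mathfrak{S}_{\mathbf{k}}$-invariant, the resulting $v$ is $\mathfrak{S}_{\mathbf{k}}$-equivariant, semi-algebraic, tangent to $W$ along $W$, and satisfies $v(F) < 0$ on $B$; the absence of critical values in $(c_{i}, c]$ is exactly what forces a uniform lower bound on $|v(F)|$ over $B$, hence guarantees that downward trajectories descend from level $c$ to level $c_{i}$ in bounded time without getting stuck. Renormalizing so that $v(F) \equiv -1$ on $B$ and integrating (trajectories stay in the compact set $B$, so they exist for all the required time), the time-$t$ maps assemble into an $\mathfrak{S}_{\mathbf{k}}$-equivariant semi-algebraic strong deformation retraction $r : [0,1] \times S_{\leq c} \to S_{\leq c}$ with $r_{0} = \mathrm{id}$, $r_{1}(S_{\leq c}) = S_{\leq c_{i}}$, and $r_{t}$ fixing $S_{\leq c_{i}}$ pointwise.

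It remains to descend to the quotient. Since $r$ commutes with the $\mathfrak{S}_{\mathbf{k}}$-action it induces a well-defined map $\bar r : [0,1] \times \phi_{\mathbf{k}}(S_{\leq c}) \to \phi_{\mathbf{k}}(S_{\leq c})$; as $\phi_{\mathbf{k}}$ is a proper quotient map of semi-algebraic sets (properness because $S$ is bounded) and $r$ is continuous and semi-algebraic, $\bar r$ is continuous and semi-algebraic, and it is a strong deformation retraction of $\phi_{\mathbf{k}}(S_{\leq c})$ onto $\phi_{\mathbf{k}}(S_{\leq c_{i}})$. Hence $\phi_{\mathbf{k}}(S_{\leq c})$ is semi-algebraically homotopy equivalent to $\phi_{\mathbf{k}}(S_{\leq c_{i}})$, and transferring the statement back to an arbitrary real closed field $\R$ completes the argument.

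The main obstacle is precisely the construction of $v$: it must be kept simultaneously semi-algebraic, $\mathfrak{S}_{\mathbf{k}}$-equivariant, and strictly downward with $v(F)$ bounded away from $0$ on all of $B$, and one must then verify that its flow is defined for the needed time and that the time-$t$ maps are semi-algebraic. Over $\mathbb{R}$ this is a technical but routine adaptation of the nonequivariant Morse deformation lemma of \cite{BPRbook3}; the genuinely new input is only the observation that every ingredient — the metric, the function $F$, the defining polynomial $P$, and the partition of unity — can be taken $\mathfrak{S}_{\mathbf{k}}$-invariant, so that the retraction is equivariant and descends to the orbit space with no loss.
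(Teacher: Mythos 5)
Your proposal follows essentially the same route as the paper's source for this lemma (\cite[Lemma 5]{BC2013}, mirrored in the paper's own proof of Lemma \ref{lem:equivariant_morseB}): an $\mathfrak{S}_{\mathbf{k}}$-equivariant gradient-like descent across the critical-value-free band after reducing to $\R=\mathbb{R}$, passage to the orbit space via $\phi_{\mathbf{k}}$, and a Tarski--Seidenberg transfer at the end. The one inaccuracy is the claim that the time-$t$ flow maps (and hence the retraction $r$ and the induced $\bar r$) are themselves semi-algebraic --- integral curves of semi-algebraic vector fields are in general not semi-algebraic --- but this is harmless and handled exactly as in the paper's \S 2.2 and the proof of Lemma \ref{lem:equivariant_morseB}: over $\mathbb{R}$ one only needs the (equivariant, topological) deformation retraction, after which semi-algebraic homotopy equivalence of the semi-algebraic sets $\phi_{\mathbf{k}}(S_{\leq c})$ and $\phi_{\mathbf{k}}(S_{\leq c_i})$ and the extension to an arbitrary real closed field follow by the standard arguments of \cite[Chapter 7, e.g.\ Theorem 7.25]{BPRbook3}.
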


Let $L^{+} (\x) \subset
  L$ and $L^{-} (\x) \subset L$ denote the positive and negative eigenspaces
  of the Hessian of the function $p_{1}^{(k)}$ restricted to $W$ at $\x$.
  Let $\ind^{-} (\x) = \dim  L^{-} (\x)$.

  The proof of the following lemma follows closely the proof of a similar result  (Lemma 6) in \cite{BC2013}.
  
\begin{lemma}
  \label{lem:equivariant_morseB}
  Let $G_c$ denote a set of representatives of orbits of critical points $\x$  of  $F$ restricted to $W$ with $F(\x) = c$.
    Then,  for all small enough $t>0$,
    \begin{eqnarray}
    \label{eqn:inequality2}
      \chi^{\mathrm{top}}(\phi_{\mathbf{k}} (S_{\leq c}),\F) & =& \chi^{\mathrm{top}}(
      \phi_{\mathbf{k}} (S_{\leq c-t}),\F) +
      \sum_{\x}
      (-1)^{\ind^-(\x)},
      \end{eqnarray}
      where the sum is taken over all $\x \in G_c$ with $\sum_{1 \leq i \leq k} \dfrac{\partial P}{\partial X_{i}} (\x)  <0$.
\end{lemma}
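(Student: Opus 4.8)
The plan is to run the usual Morse-theoretic analysis of the sublevel sets $S_{\le c}=\{\x\in S\mid F(\x)\le c\}$ as in \cite[Chapter~13]{BPRbook3}, but performed $\mathfrak{S}_{\mathbf{k}}$-equivariantly and then pushed down to orbit spaces, following closely \cite[Lemma~6]{BC2013}. As in \S\ref{sec:prelim}, since integration of gradient flows will be used I would first reduce to $\R=\mathbb{R}$ (the conclusion is invariant under real closed field extension, so Tarski--Seidenberg transfer recovers the general case); note also that $S$ is a compact manifold with boundary $W$, so on the closed bounded pieces below $\chi^{\mathrm{top}}$ coincides with the additive valuation $\chi^{\gen}$.

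First I would localize the problem. Because $F=p_{1}^{(k)}$ is linear it has no critical point in the interior of $S$, so all topological change in $\phi_{\mathbf{k}}(S_{\le c})$ originates at the (finitely many, non-degenerate) critical points of $F|_{W}$, and between consecutive critical values nothing changes (Lemma~\ref{lem:equivariant_morseA}, or re-derived here by flowing along an $\mathfrak{S}_{\mathbf{k}}$-invariant gradient-like vector field for $F|_{W}$, using the standard invariant metric on $\R^{k}$). The critical set at level $c$ is $\mathfrak{S}_{\mathbf{k}}$-stable, hence a union of orbits; choosing a $G_{\x}$-invariant ball $N_{\x}$ around a representative $\x$ of each orbit and using excision together with additivity of $\chi^{\gen}$, the difference $\chi^{\mathrm{top}}(\phi_{\mathbf{k}}(S_{\le c}))-\chi^{\mathrm{top}}(\phi_{\mathbf{k}}(S_{\le c-t}))$ becomes a sum over $\x\in G_{c}$ of the Euler-Poincar\'e characteristic of the local sublevel pair $(N_{\x}\cap S_{\le c},\,N_{\x}\cap S_{\le c-t})$ divided by the isotropy group $G_{\x}$.

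Then comes the local equivariant model, which is the heart of the matter. Since $F$, $W$ and $\x$ are all $G_{\x}$-invariant, the Hessian of $F|_{W}$ at $\x$ is a non-degenerate $G_{\x}$-invariant form on $T_{\x}W$ (and at a critical point of $F|_{W}$ one has $T_{\x}W=\{v\mid\sum_{i}v_{i}=0\}=L$, because $\nabla P(\x)$ is parallel to $(1,\dots,1)$), and an equivariant Morse lemma provides $G_{\x}$-invariant Morse coordinates; write $L^{-}(\x)\subseteq L$ for its negative eigenspace and $\ind^{-}(\x)=\dim L^{-}(\x)$. The normal direction to $W$ in $S$ is controlled by the Lagrange multiplier $\lambda$ with $\nabla F=\lambda\nabla P$; since $\nabla F=(1,\dots,1)$ we get $\sum_{i}\tfrac{\partial P}{\partial X_{i}}(\x)=\lambda\,\lvert\nabla P(\x)\rvert^{2}$, so the hypothesis of the lemma is precisely $\lambda<0$, i.e.\ $F$ increases into $S$ along the normal. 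In that case the standard manifold-with-boundary analysis, carried out $G_{\x}$-equivariantly, shows the local sublevel pair is $G_{\x}$-homotopy equivalent to $(\mathbf{D},\partial\mathbf{D})$ with $\mathbf{D}$ the unit disc in $L^{-}(\x)$; when $\lambda>0$ a $G_{\x}$-invariant deformation retraction shows it contributes nothing. Now $L^{-}(\x)$ is a $G_{\x}$-subrepresentation of $L$, so by Proposition~\ref{prop:orthogonal} (items~\ref{itemlabel:prop:orthogonal:3} and~\ref{itemlabel:prop:orthogonal:4}) it equals $L^{-}_{\fixed}\oplus\bigoplus_{(i,j)\in I}M^{(i)}_{j}$ for some $L^{-}_{\fixed}\subseteq L_{\fixed}$ and some index set $I$; item~\ref{itemlabel:prop:orthogonal:5} then gives that $(\mathbf{D},\partial\mathbf{D})/G_{\x}$ is homotopy equivalent to $(\ast,\ast)$ when $I\neq\emptyset$ (contributing $0$) and is homeomorphic to $(D^{\,\ind^{-}(\x)},S^{\,\ind^{-}(\x)-1})$ when $I=\emptyset$, i.e.\ when $L^{-}(\x)\subseteq L_{\fixed}$ (contributing $(-1)^{\ind^{-}(\x)}$). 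Summing the surviving contributions over $\x\in G_{c}$ yields the stated identity.

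The main obstacle is exactly this third step: one has to run Morse theory for the manifold-with-boundary $S$ — keeping straight the dichotomy between the two types of boundary critical points and the corresponding handle attachments — while simultaneously making every deformation retraction and handle identification $G_{\x}$-equivariant so that it descends to the orbit space. This is where the argument mirrors \cite[Lemma~6]{BC2013}, and where Proposition~\ref{prop:orthogonal} does the essential work of converting each equivariant handle into the simple numerical contribution above; the final globalization (integrating the invariant gradient flow, valid once $\R=\mathbb{R}$) then assembles the local pictures into the asserted equality.
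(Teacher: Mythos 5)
Your proposal follows essentially the same route as the paper's own proof: reduction to $\R=\mathbb{R}$ by Tarski--Seidenberg transfer, equivariant Morse theory for the manifold with boundary with the dichotomy governed by the sign of $\sum_{i}\partial P/\partial X_{i}(\x)$ (equivalently the Lagrange multiplier), equivariance of the retractions so everything descends to $\phi_{\mathbf{k}}$, and Proposition \ref{prop:orthogonal} to identify the orbit space of each attached pair as either homotopy trivial or a genuine disc pair. Note only that, exactly as in the paper's argument, what this analysis actually establishes is that an orbit with $\sum_{i}\partial P/\partial X_{i}(\x)<0$ contributes $(-1)^{\ind^{-}(\x)}$ precisely when $L^{-}(\x)\subset L_{\fixed}$ and contributes $0$ otherwise, which is the form of the count that feeds into \eqref{eqn:equivariant-ep2}.
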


\begin{proof} We first prove the proposition for $\R =
\mathbb{R}$. 
We will also assume that the function $F$ takes distinct values on the distinct orbits
  of the critical points of $F$ restricted to $W$ for ease of exposition of the proof.  Since
  the topological changes at the critical values are local near the critical points which are assumed to be isolated, the general case follows easily using a standard partition of unity argument. 
  Also, note that the value of $\ind^{-1}(\x)$ (respectively,   $\sign(\sum_{1 \leq i \leq k} \dfrac{\partial P}{\partial X_{i}} (\x) )$)
 are equal for all critical points $\x$ belonging to one orbit.
 
    Suppose that for each critical point $\x \in W$, with $F (\x) =c$, 
    \[
    \sum_{1 \leq i \leq k} \dfrac{\partial P}{\partial X_{i}} (\x)  >0.
    \]
    We prove that in this case,
    for for all small enough $t>0$,
    \begin{eqnarray}
    \label{eqn:inequality1}
      \chi^{\mathrm{top}}(\phi_{\mathbf{k}} (S_{\leq c}),\F) & = & \chi^{\mathrm{top}}(
      \phi_{\mathbf{k}} (S_{\leq c-t}),\F).
    \end{eqnarray}
    
 If 
 \[
 \sum_{1 \leq i \leq k} \dfrac{\partial P}{\partial X_{i}} (\x) 
  >0,
  \]
  then $S_{\leq c}$ retracts
  $\mathfrak{S}_{\mathbf{k}}$-equivariantly to a space $S_{\leq c-t} \cup_{B}
  A$ where the pair $(A,B) = \coprod_{\x} ( A_{\x},B_{\x})$,  and where the
  disjoint union is taken over the set critical points $\x$ with $F (\x) =c$,
  and each pair $(A_{\x},B_{\x})$ is homeomorphic to the pair $(
  \mathbf{D}^{i} \times [ 0,1 ], \partial \mathbf{D}^{i} \times [ 0,1 ] \cup
  \mathbf{D}^{i} \times \{ 1 \})$,
  where $i$ is the dimension of the negative eigenspace of the Hessian  of the
  function $e_1^{(k)}$ restricted to $W$ at $\x$.
  This follows from the basic Morse theory
  (see \cite[Proposition 7.21]{BPRbook3}). Since the pair $(
  \mathbf{D}^{i} \times [ 0,1 ], \partial \mathbf{D}^{i} \times [ 0,1 ] \cup
  \mathbf{D}^{i} \times \{ 1 \})$ is homotopy equivalent to $(\ast, \ast
 )$, $S_{\leq c}$ is homotopy equivalent to $S_{\leq c-t}$, and it follows
  that $\phi_{\mathbf{k}} (S_{\leq c})$ is homotopy equivalent to
  $\phi_{\mathbf{k}} (S_{\leq c-t})$ as well, because of the fact that
  retraction of $S_{\leq c}$ to $S_{\leq c-t} \cup_{B} A$ is chosen to be
  equivariant. The equality \eqref{eqn:inequality1} then follows immediately. \\

  We now consider the case when 
  for each critical point $\x \in W$, with $F (\x) =c$,
  $\sum_{1 \leq i \leq k} \dfrac{\partial
  P}{\partial X_{i}} (\x)  < 0$. Let $T_{\x} W$ be the tangent space of $W$ at
  $\x$. The translation of $T_{\x} W$ to the origin is then the linear subspace
  $L \subset \R^{k}$ defined by $\sum_{i} X_{i} =0$. 
  Let $\x  \in
  L_{\boldpi}$ where $\boldpi= (\pi^{(1)}, \ldots, \pi^{(
  \omega)}) \in \boldPi_{\mathbf{k}}$, where for each $i,1 \leq i
  \leq \omega$, $\pi^{(i)} = (\pi^{(i)}_{1}, \ldots, \pi^{(i
 )}_{\ell_{i}}) \in \Pi_{k_{i}}$. The subspaces $L^{+} (\x),L^{-} (\x
 )$ are stable under the the natural action of the subgroup $\prod_{1 \leq i
  \leq \omega,1 \leq j \leq \ell_{i}} \mathfrak{S}_{\pi^{(i)}_{j}}$ of
  $\mathfrak{S}_{\mathbf{k}}$. For $1 \leq i \leq \omega,1 \leq j \leq
  \ell_{i}$, let $L^{(i)}_{j}$ denote the subspace $L \cap L_{\pi^{(i
 )}_{j}}$ of $L$, and $M^{(i)}_{j}$ the orthogonal complement of $L^{(i
 )}_{j}$ in $L$. Let $L_{\fixed} =L \cap L_{\boldpi}$. It
  follows from 
  Parts \eqref{itemlabel:prop:orthogonal:2}, \eqref{itemlabel:prop:orthogonal:3}, and\eqref{itemlabel:prop:orthogonal:4}
  of Proposition \ref{prop:orthogonal} that:
  \begin{enumerate}[i]
    \item For each $i,j, \,1 \leq i \leq \omega,1 \leq j \leq
    \ell_{i}$, $M^{(i)}_{j}$ is an irreducible representation of
    $\mathfrak{S}_{\pi_{i}}$, and the action of $\mathfrak{S}_{\pi^{(i'
   )}_{j'}}$ on $M^{(i)}_{j}$ is trivial if $(i,j) \neq (i',j')$.
    Hence, for each $i,   j  ,1 \leq i \leq \omega,1 \leq j
    \leq \ell_{i}$, $L^{-} (p) \cap M^{(i)}_{j} =0  \tmop{or}  M^{(i
   )}_{j}$.
    
    \item The subgroup $\prod_{1 \leq i \leq \omega,1 \leq j \leq \ell_{i}}
    \mathfrak{S}_{\pi^{(i)}_{j}}$ of $\mathfrak{S}_{\mathbf{k}}$ acts
    trivially on $L_{\fixed}$.
    
    \item There is an orthogonal decomposition $L=L_{\fixed} \oplus
    \left(\bigoplus_{1 \leq i \leq \omega,1 \leq j \leq \ell_{i}} M^{(i
   )}_{j} \right)$.
  \end{enumerate}
  It follows that
  \begin{eqnarray*}
    L^{-} (p) & = & L'_{\fixed}   \oplus \left(\bigoplus_{(i,j) \in
    I} M^{(i)}_{j} \right),
  \end{eqnarray*}
  where $L'_{\fixed}$ is some subspace of $L_{\fixed}$ and $I
  \subset \{ (i,j) \mid 1 \leq i \leq \omega,1 \leq j \leq \ell_{i} \}$.
  
  It follows from the proof of Proposition 7.21 in \cite{BPRbook3} that
  for all sufficiently small $t >0$ then $S_{\leq c}$ is retracts
  $\mathfrak{S}_{\mathbf{k}}$-equivariantly to a space $S_{\leq c-t} \cup_{B}
  A$ where the pair $(A,B) = \coprod_{\x} ( A_{\x},B_{\x})$, and the
  disjoint union is taken over the set critical points $\x$ with $F (\x) =c$,
  and each pair $(A_{\x},B_{\x})$ is homeomorphic to the pair $(\mathbf{D}^{\ind^{-} (\x)}, \partial \mathbf{D}^{\ind^{-} (\x
 )})$. It follows from the fact that the retraction mentioned above is
  equivariant that $\phi_{\mathbf{k}} (S_{\leq c})$ retracts to a space
  obtained from $\phi_{\mathbf{k}} (S_{\leq c-t})$ by glueing
  $\orbit_{\mathfrak{S}_{\mathbf{k}}} \left(\coprod_{\x} A_{\x} \right)$
  along $\orbit_{\mathfrak{S}_{\mathbf{k}}} \left(\coprod_{\x} B_{\x}
  \right)$. Now there are the following cases to consider:
  \begin{enumerate}[(a)]
    \item $\ind^{-} (\x) =0$. In this case
    \[
    \orbit_{\mathfrak{S}_{\mathbf{k}}} (\coprod_{\x} A_{\x},
    \coprod_{\x} B_{\x})
    \] 
    is homotopy equivalent to $(\ast, \emptyset)$.
    
    \item $L^{-} (\x)   \subset L_{\fixed}$ (i.e. $I= \emptyset$ in
    this case). In this case 
    \[
    \orbit_{\mathfrak{S}_{\mathbf{k}}}
    (\coprod_{\x} A_{\x}, \coprod_{\x} B_{\x})
    \] 
    is homeomorphic to
    $(\mathbf{D}^{\ind^{-} (\x)}, \partial \mathbf{D}^{\ind^{-}
    (\x)})$ by 
    Part \eqref{itemlabel:prop:orthogonal:5}
    of Proposition \ref{prop:orthogonal}.
    
    \item Otherwise, there is a non-trivial action on $L^{-} (\x)$ 
    of  the group 
    \[
    \prod_{(i,j) \in I} \mathfrak{S}_{\pi^{(i)}_{j}},
    \] 
    and it
    follows from 
    Part  \eqref{itemlabel:prop:orthogonal:5}
    of Proposition \ref{prop:orthogonal} that in this
    case 
    \[
    \orbit_{\mathfrak{S}_{\mathbf{k}}}(\coprod_{\x} A_{\x}, \coprod_{\x} B_{\x})
    \]
     is homotopy equivalent to $(\ast, \ast)$.
  \end{enumerate}
  The equality \eqref{eqn:inequality2} 
  follow immediately from \eqref{eqn:MV2-ep}.
  
This finishes the proof in case $\R =\mathbb{R}$. The statement over a
general real closed field $\R$ now follows by a standard application of the
Tarski-Seidenberg transfer principle (see for example the proof of Theorem
7.25 in \cite{BPRbook3}).\end{proof}

\begin{proof}[Proof of Theorem \ref{thm:equivariant-ep}]
The proof follows immediately from Lemmas  \ref{lem:equivariant_morseA} and \ref{lem:equivariant_morseB}.
\end{proof}

\bibliographystyle{abbrv}
\bibliography{master}
\end{document}